\tikzset{>=latex}
\newcommand{\map}[2]{\,{:}\,#1\!\longrightarrow\!#2}
\newcommand{\setc}[2]{\left\{\:\!#1\:\middle|\;\!#2\!\:\right\}}
\newcommand{\set}[1]{\left\{\,\!#1\,\!\right\}}
\newcommand{\CC}{\mathbb{C}}
\newcommand{\RR}{\mathbb{R}}
\newcommand{\ZZ}{\mathbb{Z}}
\newcommand{\NN}{\mathbb{N}}
\newcommand{\PP}{\mathbb{P}}
\newcommand{\II}{\mathbb{I}}
\newcommand{\Gg}{\mathcal{G}}
\newcommand{\Ss}{\mathcal{S}}
\newcommand{\Oo}{\mathcal{O}}
\newcommand{\Ll}{\mathcal{L}}
\newcommand{\Ff}{\mathcal{F}}
\newcommand{\Ii}{\mathcal{I}}
\newcommand{\Kk}{\mathcal{K}}
\newcommand{\Ee}{\mathcal{E}}
\newcommand{\Mm}{\mathcal{M}}
\newcommand{\ox}{\otimes}
\newcommand{\lamb}{\lambda_\bullet}
\newcommand{\Tb}{T_\bullet}
\newcommand{\bs}{\backslash}
\newcommand{\deq}{\stackrel{\text{\tiny def}}{=}}
\newcommand{\abs}[1]{\left| #1 \right|}
\newcommand{\sq}{\square}
\newcommand{\glr}{\mathfrak{gl}_r}
\newcommand{\Llambmu}{L(\lamb)^{\mathsf{sing}}_\mu}
\newcommand{\Blambmu}{\crys(\lamb)^{\mathsf{sing}}_\mu}
\newcommand{\Gal}{\mathrm{Gal}}
\newcommand{\cc}{\mathsf{c}}
\newcommand{\associahedron}{\Theta}
\newcommand{\bethe}{A}
\newcommand{\crys}{\mathtt{B}}
\newcommand{\bspec}{\mathcal{A}}
\newcommand{\bdiffop}{\mathcal{D}}
\newcommand{\rybalg}{\overline{A}}
\newcommand{\rybspec}{\overline{\mathcal{A}}}
\newcommand{\SYT}{\mathtt{SYT}}
\newcommand{\SSYT}{\mathtt{SSYT}}
\newcommand{\decgd}[1]{\mathtt{decgd}(#1)}
\newcommand{\RSK}{\mathtt{RSK}}
\newcommand{\words}{\mathtt{words}}
\newcommand{\QRSK}{\mathtt{Q}}
\newcommand{\Rect}{\mathtt{Rect}}
\newcommand{\Part}{\mathtt{Part}}
\newcommand{\evac}{\mathtt{evac}}
\newcommand{\ttN}{\mathtt{N}}
\newcommand{\yt}[1]{\:\ytableausetup{smalltableaux,centertableaux}
                      \ytableaushort{#1}
                      \ytableausetup{nosmalltableaux}\:}
\newcommand{\M}[1]{\overline{M}_{0,#1}}
\newcommand{\oM}[1]{M_{0,#1}}
\DeclareMathOperator{\Hom}{Hom}
\DeclareMathOperator{\End}{End}
\DeclareMathOperator{\Spec}{Spec}
\DeclareMathOperator{\Gr}{Gr}
\DeclareMathOperator{\id}{id}
\def\SheafHom{\mathop{\mathcal{H}\!{\it om}}\nolimits}
\DeclareMathOperator{\Aff}{Aff}
\newcommand{\AxisRotator}[1][rotate=0]{%
    \tikz [x=0.25cm,y=0.60cm,line width=.2ex,-stealth,#1,scale=0.75] \draw (0,0) arc (-150:150:1 and 1);%
}
\newtheorem{Theorem}[equation]{Theorem}
\newtheorem{Proposition}[equation]{Proposition}
\newtheorem{Lemma}[equation]{Lemma}
\newtheorem{Corollary}[equation]{Corollary}
\theoremstyle{definition}
\newtheorem{Definition}[equation]{Definition}
\newtheorem{Example}[equation]{Example}
\newtheorem{Remark}[equation]{Remark}
\numberwithin{equation}{section}
\title{The monodromy of real Bethe vectors for the Gaudin model}
\author{Noah White}
\address{Noah White, School of Mathematics, James Clerk Maxwell Building, The King's Buildings, Peter Guthrie Tait Road, Edinburgh, EH9 3FD, UK.}
\email{noah.white@ed.ac.uk}
\begin{document}

\begin{abstract}
The Bethe algebras for the Gaudin model act on the multiplicity space of tensor products of irreducible \( \glr \)-modules and have simple spectrum over real points. This fact is proved by Mukhin, Tarasov and Varchenko who also develop a relationship to Schubert intersections over real points. We use an extension to \( \M{n+1}(\RR) \) of these Schubert intersections, constructed by Speyer, to calculate the monodromy of the spectrum of the Bethe algebras. We show this monodromy is described by the action of the cactus group \( J_n \) on tensor products of irreducible \( \glr \)-crystals.
\end{abstract}
\thanks{The author was supported by an Edinburgh Global Research scholarship and a stipend from the School of Mathematics, University of Edinburgh.}

\maketitle
\tableofcontents

\section{Introduction}

\subsection{Gaudin Hamiltonians}
\label{sec:gaudin-hamiltonians}

The Hamiltonians for the Gaudin model are \( n \) commuting operators depending on distinct complex parameters \( z_1,z_2,\ldots,z_n \) acting on a tensor product of irreducible representations of \( \glr \). The problem considered in this paper is to describe the \emph{Galois} or \emph{monodromy group} of these operators.

Let \( \glr \) be the Lie algebra of \( r \times r \) matrices and \( e_{ij} \) the matrix with a \( 1 \) in the \( (i,j) \)-entry and \( 0 \) everywhere else. For \( z = (z_1,z_2,\ldots,z_n) \) a set of distinct complex parameters, the \emph{Gaudin Hamiltonians} are
\begin{equation}
  \label{eq:hamiltonians}
  H_a(z) = \sum_{b \neq a} \frac{\Omega_{ab}}{z_a - z_b} \quad{where} \quad \Omega_{ab} = \sum_{i,j} e^{(a)}_{ij}e^{(b)}_{ji},
\end{equation}
for \( a = 1,2,\ldots,n \). We consider these either as elements of \( U(\glr)^{\ox n} \) or as operators which act on \( 
L(\lamb) = L(\lambda_{1})\ox L(\lambda_{2})\ox\cdots\ox L(\lambda_{n}) \) for an \( n \)-tuple of partitions \( \lamb = (\lambda_{1},\lambda_{2},\ldots,\lambda_{n}) \) with at most \( r \) rows. For \( X \in U(\glr) \), 
\begin{equation*}
X^{(a)} = 1\ox\cdots\ox 1\ox X\ox 1\ox \cdots \ox 1,
\end{equation*}
where \( X \) is placed in the \( a^{\text{th}} \) factor.

The main problem in the study of these operators is to produce a complete family of simultaneous eigenvectors (whenever the operators are diagonalisable), see for example~\cite{Reshetikhin:1995vs}. We consider the problem of understanding how these eigenvectors change as we vary the parameter \( z \).

These operators commute with the action of \( \glr \) on \( L(\lamb) \) (see~\cite{Mukhin:2010ky}). In particular this implies the Hamiltonians preserve weight spaces and singular vectors, hence it is enough to understand the action of the \( H_a(z) \) on the space \( \Llambmu \) for some partition \( \mu \). Let \( G(\lamb;z)_\mu \) be the commutative subalgebra of \( \End(\Llambmu) \) generated by the operators~(\ref{eq:hamiltonians}).

\subsection{The main result}
\label{sec:main-result}

In~\cite{Feigin:1994tc} a maximal commutative subalgebra \( \bethe(\lamb;z)_\mu \) containing \( G(\lamb;z)_\mu \) is constructed. This algebra is called \emph{the Bethe algebra} for \( \Llambmu \).
The affine group \( \Aff_1 \simeq \CC^{\times} \ltimes \CC \) acts on the parameter space by simultaneous scaling and translation on each coordinate, i.e for two scalars \( \alpha,\beta \in \CC \)
\begin{equation*}
  (\alpha,\beta)\cdot z = (\alpha z_1 + \beta,\alpha z_2 + \beta, \ldots,\alpha z_n + \beta).
\end{equation*}
Since the denominators in~(\ref{eq:hamiltonians}) are all of the form \( z_a-z_b \), the Gaudin algebras are invariant under this action, \( G(\lamb;z)_\mu = G(\lamb;\alpha z + \beta)_\mu \). It is also known \( \bethe(\lamb;z)_\mu = \bethe(\lamb;\alpha z + \beta)_\mu \) (see~\cite[Proposition~1]{Chervov:2010fz}).

If \( X_n = \setc{z \in \CC^n}{z_a \neq z_b \text{ for } a \neq b} \), our parameter space becomes \( X_n/\Aff_1 \) which we identify with \( \oM{n+1}(\CC) \), the moduli space of \emph{irreducible} genus \( 0 \) curves with \( n+1 \) marked points, the \( (n+1)^{\text{st}} \) marked point being placed at infinity. We obtain in this way a family of algebras \( \bethe(\lamb)_\mu \) over \( \oM{n+1}(\CC) \). We denote the spectrum of this family by
\begin{equation*}
  \pi : \bspec(\lamb)_\mu \deq \Spec \bethe(\lamb)_\mu \longrightarrow \oM{n+1}(\CC).
\end{equation*}
The morphism \( \pi \) is finite (i.e. it is a finite ramified covering space) and is our main object of study. Our aim will be to say something about the Galois theory of this map. We denote by \( \Gal(\pi) \), the Galois group of \( \pi \) (see Section~\ref{sec:galois-actions}) and by \( \bspec(\lamb;z)_\mu \), the fibre over the point \( z \in \oM{n+1}(\CC) \).

In~\cite{Henriques:2006is} Henriques and Kamnitzer show the \emph{cactus group} \( J_n \) acts on the crystal \( \crys(\lamb) = \crys(\lambda_{1}) \ox \crys(\lambda_{2}) \ox \cdots \ox \crys(\lambda_{n}) \) for \( \crys(\lambda) \) the irreducible crystal of highest weight \( \lambda \). In fact this action preserves weight spaces and singular vectors so restricts to an action on \( \Blambmu \). The cactus group contains a subgroup \( PJ_n \), the \emph{pure cactus group}. The following is the main theorem of the paper. It was conjectured in a paper of Rybnikov~\cite[Conjecture~1.6]{Rybnikov:2014wh} (where it is attributed to Etingof). The statement was also conjectured independently by Brochier-Gordon which is where the author first learnt of the statement.

\begin{Theorem}
  \label{thm:main-thm}
For generic \( z \in \oM{n+1} \) there exists a map \( PJ_n \longrightarrow \Gal(\pi) \) from the pure cactus group to the Galois group of \( \pi \) such that there exists a naturally defined bijection 
\begin{equation*}
  \begin{tikzcd}
    \bspec(\lamb;z)_\mu \arrow{r}{\sim} & \Blambmu,
  \end{tikzcd}
\end{equation*}
equivariant for the induced action of \( PJ_n \) on \( \bspec(\lamb;z)_\mu \).
\end{Theorem}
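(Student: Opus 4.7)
The strategy is to extend \( \pi \) to a finite covering that is étale over the real locus \( \M{n+1}(\RR) \), and then identify its monodromy with the cactus action by comparing the boundary behaviours of the two sides.

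The first step is to extend \( \pi \) across the boundary of \( \oM{n+1} \hookrightarrow \M{n+1} \) using Speyer's construction. Mukhin-Tarasov-Varchenko identify the fibre \( \bspec(\lamb;z)_\mu \) with a Schubert intersection defined by flags osculating the rational normal curve at \( z_1,\dots,z_n,\infty \). Speyer extends this family over \( \M{n+1}(\RR) \) by assigning, at a nodal boundary point, a flag to each marked point on each irreducible component, so that the extended fibre factors as a product of smaller Schubert intersections (one per component) and hence, by MTV, as a product of smaller Bethe spectra. A key feature is that Speyer's extended intersection remains transverse over every real point, extending the MTV reality theorem to the boundary and making the extended cover finite étale over all of \( \M{n+1}(\RR) \).

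Each connected component of \( \M{n+1}(\RR) \) is a \( K(PJ_n,1) \)-space (Davis-Januszkiewicz-Scott; Etingof-Henriques-Kamnitzer-Rains), so the monodromy representation of the extended cover restricted to such a component yields the desired map \( PJ_n \to \Gal(\pi) \). I would take as basepoint a caterpillar corner of \( \M{n+1}(\RR) \), the maximally degenerate stable curve obtained by iteratively pinching off adjacent pairs of marked points. At this point the boundary decomposition of Step~1 iterates into a tensor product of two-point Bethe algebras whose joint spectrum is naturally labelled by the Henriques-Kamnitzer tensor product basis of \( \Blambmu \). This produces the required bijection \( \bspec(\lamb;z)_\mu \to \Blambmu \) at the basepoint, and parallel transport in the étale cover carries it to any nearby generic \( z \).

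To show this bijection is \( PJ_n \)-equivariant one must match each standard generator of \( PJ_n \) with the corresponding monodromy operator. Each such generator is represented by a loop in \( \M{n+1}(\RR) \) crossing a codimension-one wall where a contiguous block of the \( z_i \) collides, and by Step~1 the covering degenerates along that wall into a smaller Bethe spectrum supported on the corresponding tensor subfactor. The Henriques-Kamnitzer cactus generator acts on \( \Blambmu \) by the crystal commutor on the same subfactor, so the task reduces to showing that the real monodromy around the wall implements the commutor under the MTV-Speyer labelling. The main obstacle I anticipate is precisely this identification: it requires tracking a real Bethe eigenvector along a path that degenerates and un-degenerates through the wall and comparing the resulting involution with the Schützenberger/evacuation involution defining the commutor. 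I would attack it by induction on \( n \), exploiting that both the cactus action and Speyer's boundary stratification are built inductively from the same product decomposition, so that the base case of two tensor factors determines the general statement.
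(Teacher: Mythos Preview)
Your overall architecture matches the paper's: pass from $\bspec(\lamb)_\mu$ to Schubert intersections via the MTV isomorphism, extend over $\M{n+1}$ via Speyer's family, use that the real fibres are reduced so that $\Ss(\lamb,\mu^\cc)(\RR)\to\M{n+1}(\RR)$ is a genuine cover, and read off a map $PJ_n\to\Gal(\pi)$ from the real monodromy together with the identification of monodromy and Galois groups. Your caterpillar-point labelling of the fibre is also essentially what the paper does: Speyer's node-labelling at the maximally degenerate vertex is exactly the chain of partitions along the vertical edge of a (dual equivalence) cylindrical growth diagram, which the paper then matches with $\Blambmu$.

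The gap is in your plan for equivariance. Induction on $n$ with base case ``two tensor factors'' cannot close: for $n=2$ the space $\M{3}(\RR)$ is a point, $PJ_2$ is trivial, and there is no monodromy to compare with the commutor. More generally, the generator $s_{1n}$ cannot be reduced to fewer tensor factors by any boundary factorisation, so at some stage you must make a direct computation of what crossing a wall does to Speyer's labelling. The paper does not attempt an inductive reduction; instead it uses Speyer's explicit wall-crossing formula (the ``flip a triangle'' rule for growth diagrams, Propositions~3.9 and~3.24 in the paper) and the growth-diagram description of the Sch\"utzenberger involution (Corollary~2.6) to see directly that crossing the wall for $s_{1q}$ applies the partial Sch\"utzenberger involution to the standard tableau labelling the fibre. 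A separate combinatorial argument (Proposition~4.10, using RSK duality) then shows that the Q-symbol map $[\crys^{\otimes n}]^{\mathsf{sing}}_\mu\to\SYT(\mu)$ intertwines the Henriques--Kamnitzer action with partial Sch\"utzenberger involutions. The general $\lamb$ is handled not by induction on $n$ but by embedding both $\Blambmu$ and $\decgd{\lamb,\mu^\cc}$ into the corresponding objects for $\lamb=(\sq^{\tilde n})$ and checking compatibility (Lemmas~4.11 and~4.12). So the missing ingredient in your plan is precisely Speyer's combinatorial wall-crossing description; once you have it, the equivariance is a direct calculation rather than an induction.
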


The author is has been informed Kamnitzer and Rybnikov have obtained similar results. The strategy for proving Theorem~\ref{thm:main-thm} is to use the fact the Bethe algebras are isomorphic to functions on intersections of Schubert varieties. Speyer~\cite{Speyer:2014gg} constructs a compactification of a flat family of Schubert intersections and describes it combinatorially. We use this description to calculate the monodromy of this family and relate it back to the Galois theory of the Bethe algebras.

\subsection{The vector representation and Calogero-Moser space}
\label{sec:vect-rep-CM}

An important case is when the partitions \( \lambda_{s} \) are all equal to \( \sq = (1) \). This means \( V = L(\lambda_{s}) = L(\sq) \) is the vector representation, let \( \crys = \crys(\sq) \) be the associated crystal. The importance stems from the fact \( L(\lambda) \) can be embedded into \( V^{\otimes m} \) for some large enough \( m \). Several times we will reduce to this case in proofs.

Let \( r=n \), \( \lamb = (\sq^n) \) and let \( \mu = (1,1,\ldots,1) \). We can identify the set \( \crys(\lamb)_\mu = [\crys^{\ox n}]_{(1,1\ldots,1)} \) with words of length \( n \) in the letters \( \set{1,2,\ldots,n} \) an thus with the symmetric group \( S_n \). The \( J_n \)-orbits in \(  [\crys^{\ox n}]^{\mathsf{sing}}_{(1,1\ldots,1)} \) are exactly the Kazhdan-Lusztig cells. It is shown in~\cite{Mukhin:2014wc} that the family \( \bspec(\sq^n)_{(1,1,\ldots,1)} \) is a subvariety of (type A) Calogero-Moser space (in fact the entire Calogero-Moser space is realised using slightly more general Bethe algebras). 

Bonnaf\'e and Rouquier~\cite{Bonnafe:2013ug} conjecture and provide evidence for a close link between the geometry of Calogero-Moser space and the Kazhdan-Lusztig theory of the associated Coxeter group in all types. In particular it is conjectured the Kazhdan-Lusztig cells are produced as the orbits of a Galois group action. Theorem~\ref{thm:main-thm} provides evidence that the Kazhdan-Lusztig cells can in fact be recovered from the Galois theory of Calogero-Moser space in type A, which will be developed in a forthcoming work of Brochier, Gordon and the author.

\subsection{Moduli of the Gaudin Hamiltonians}
\label{sec:moduli-gaud-hamilt}

Aguirre, Felder and Veselov~\cite{Aguirre:2011gy} showed the algebras of Hamiltonians \( G(\lamb;z)_\mu \) fit into a family of commutative algebras over \( \M{n+1}(\CC) \). The pure cactus group \( PJ_n \) is the fundamental group of \( \M{n+1}(\RR) \). Thus one would like the limits of the Gaudin algebras described by this moduli to have simple spectrum over the real points and use this covering to calculate the monodromy. Unfortunately these algebras do not always have simple spectrum. The first example where this fails is for \( n = 6 \) and \( \mu = (3,2,1) \).

In the case when \( \lamb = (\sq^n) \), the Gaudin Hamiltonians~(\ref{eq:hamiltonians}) always generate a maximal commutative subalgebra of \( \End([V^{\otimes n}]^{\mathsf{sing}}_\mu) \), with simple spectrum when they are diagonalisable and thus generate the entire Bethe algebra (see~\cite{Mukhin:2010ky}). This means Theorem~\ref{thm:main-thm} also describes the Galois theory of the Gaudin algebras. This is not always true for more general \( \lamb \) and we do not know if there are cases where the Gaudin Hamiltonians fail to generate the Bethe algebra for all \( z \in \oM{n+1}(\CC) \).

\subsection{Outline}
\label{sec:outline}

In Section~\ref{sec:preliminaries} we outline some notation and preliminary notions we will rely on throughout the paper, in particular we will recall some of the combinatorics we need. Section~\ref{sec:speyers-flat-family} recalls the construction of Speyer's flat family and describes an action of the symmetric group on this family. We use this to calculate the equivariant monodromy of Speyer's family. In Section~\ref{sec:bethe-algebras} we recall the definition of the Bethe algebras and  Mukhin, Tarasov and Varchenko's isomorphism between the Bethe algebra and functions on Schubert intersections. We prove that the equivariant monodromy in Speyer's family is given by the action of \( J_n \) on crystals and use this to prove Theorem~\ref{thm:main-thm}.

\subsection{Acknowledgements}
\label{sec:acknowledgements}

The current work was completed while the author was a student at the University of Edinburgh and will form part of his PhD thesis. The author would like to thank his supervisors, Iain Gordon and Michael Wemyss for their guidance, and the School of Mathematics, University of Edinburgh for its support.

\section{Preliminaries}
\label{sec:preliminaries}

\subsection{Notation}
\label{sec:notation}

We collect here some notation used throughout the paper. The set \( \set{1,2,\ldots,n} \) for a positive integer \( n \) will be denoted \( [n] \). The set of partitions will be denoted \( \Part \). We will use \( \sq \) to denote the partition \( (1) \) and \( \Lambda = \Lambda_{r,d} = ((d-r)^r) \), the rectangular partition with \( r \) rows and \( d-r \) columns.

For \( \lambda, \mu \in \Part \), we denote the set of semistandard and standard tableaux for the skew shape \( \lambda \bs \mu \) by \( \SSYT(\lambda\bs \mu) \) or \( \SYT(\lambda\bs\mu) \) respectively. If \( T \in \SSYT(\lambda\bs\mu) \)  denote by \( T|_{r,s} \) the skew tableaux obtained from \( T \) by ignoring boxes labelled with numbers outside the range \( [r,s] \).

We will use the \emph{Sch\"utzenberger involution} many times. It will play differing roles depending on whether we consider it as an involution of semistandard tableaux or standard tableaux. To make this distinction more obvious we will use the notation \( \xi \) only for semistandard tableaux and the notation \( \evac \) when applying the involution to standard tableaux. A definition can be found in~\cite{Fulton:1997vaa}.

\subsection{Equivariant fundamental groups and monodromy}
\label{sec:equiv-fund-groups}

We briefly recall the equivariant fundamental and monodromy group. We follow the definition given by Rhodes~\cite{Rhodes:1966bc}. It is possible to formulate the definitions in many languages, some of which apply in much broader generality (i.e the language of orbifolds or stacks) but since we deal with a reasonably simple situation we will stick with the more explicit language.

\begin{Definition}
  \label{def:equivariant-fun-grp}
Let \( X \) be a topological space and \( G \) a discrete group acting on \( X \). Let \( b \in X \) be a basepoint. The \emph{equivariant fundamental group} \( \pi_1^G(X,b) \) has elements \( (\alpha,g) \) where \( g \in G \) and \( \alpha \) is a (homotopy class of a) path from \( b \) to \( g \cdot b \). The group structure is defined by
\begin{equation*}
  (\alpha, g) \cdot (\beta, h) = (\alpha \cdot g(\beta), gh ).
\end{equation*}
Here we use the usual composition of paths and \( g(\beta) \) to denote the \( g \)-translate of the path \( \beta \). An element of \( \pi_1^G(X,b) \) is called a \( G \)-\emph{equivariant loop}.
\end{Definition}

The fundamental group \( \pi_1(X,b) \) is the kernel of the projection \( \pi_1^G(X,b) \rightarrow G \). If \( f \map{S}{B} \) is a \( G \)-equivariant topological covering we can define an action of the group \( \pi_1^G(X,b) \) on the fibre \( f^{-1}(b) \). If \( p \in f^{-1}(b) \) and \( (\alpha,g) \in \pi_1^G(X,b) \) then denote by \( \tilde{\alpha} \) the unique lift of \( \alpha \) to \( S \) such that \( \tilde{\alpha}(0) = p \). Since \( f \) is \( G \)-equivariant \( \tilde{\alpha}(1) \in f^{-1}(g \cdot b) \). Define \( (\alpha,g) \cdot p  = g^{-1} \cdot \tilde{\alpha}(1) \).

\begin{Definition}
  \label{def:equivariant-monodromy}
The above action is the \emph{\( G \)-equivariant monodromy action} of \( \pi_1^G(B,b) \) on \( f^{-1}(b) \). The image of \( \pi_1^G(B,b) \) in \( S_{f^{-1}(b)} \), the symmetric group on the fibre, is the \emph{equivariant monodromy group} and is denoted \( M^G(f;b) \).
\end{Definition}

\subsection{Tilling of \( \M{k}(\RR) \) by associahedra}
\label{sec:tilling-Mk-associahedra}

In this section we will describe a \( CW \)-structure on the real points \( \M{n+1}(\RR) \). This has been investigated in \cite{Devadoss:1999cz}, \cite{Kapranov:1993kz}, and \cite{Davis:2003hl}. We can define a stratification on \( \M{k}(\RR) \) by subspaces 
\begin{equation*}
   \M{k}(\RR) = M_1 \supset M_2 \supset \ldots \supset M_{k-2} \subset \emptyset
\end{equation*}
where \( M_i \) is the set of stable curves with at least \( i \) irreducible components.

\subsubsection{Circular orderings}
\label{sec:circular-orderings}

Let \( D_k \subset S_k \) be the dihedral group generated by \( (12\ldots n) \) and the involution reversing the order of \( 1,2,\ldots, n \). A circular ordering of the integers \( \set{1,2,\ldots,k} \) is an element of \( S_k/D_k \). That is, we imagine ordering the integers on a circle and identify orderings which coincide upon rotation or reflection. The orderings \( (1,2,3,4) \), \( (4,1,2,3) \) and \( (4,3,2,1) \) all represent the same circular ordering but are distinct from \( (1,3,2,4) \).

The order in which the marked points appear on a curve \( C \in \oM{k}(\RR) \) defines a circular ordering. For each circular order \( s \in S_k/D_k \), let \( \associahedron_s \subseteq \M{k}(\RR) \) be the closure of the subspace of curves with circular ordering \( s \). For example, \( \associahedron_{\id} \) is the closure of the set of irreducible curves projectively equivalent to a curve with marked points \( z_1 < z_2 < \ldots < z_k \). 

By a theorem of Kapranov~\cite[Proposition~4.8]{Kapranov:1993kz}, restricting the stratification to \( \associahedron_s \) gives it the structure of a CW-complex with \( i \)-skeleton \( \associahedron_s \cap M_{k-2-i} \). The symmetric group \( S_k \) acts on \( \M{k}(\RR) \) by permuting marked points. This action transitively permutes the cell complexes \( \associahedron_s \) and preserves \( i \)-cells. They are thus are all isomorphic. We define the \( (k-3) \)-\emph{associahedron} to be this cell complex.

\subsubsection{The fundamental group}
\label{sec:fundamental-group}

The \emph{cactus group}, \( J_n \), is the group with generators \( s_{pq} \) for \( 1 \le p < q \le n \) and relations
\begin{enumerate}[\hspace{1em}(i)]
\item \( s_{pq}^2 = 1 \)
\item \( s_{pq} s_{kl} = s_{kl} s_{pq} \) if the intervals \( [p,q] \) and \( [k,l] \) are disjoint.
\item \( s_{pq} s_{kl} = s_{uv} s_{pq} \) if \( [k,l] \subseteq [p,q] \), where \( v = \hat{s}_{pq}(k) \) and \( u = \hat{s}_{pq}(l) \),
\end{enumerate}
where \( \hat{s}_{pq} \) is the permutation that reverses the order of the interval \( [p,q] \). This also provides a maps to the symmetric group \( S_n \). The \emph{pure cactus group} \( PJ_n \) is defined to be the kernel of this homomorphism.

\begin{Lemma}
  \label{lem:generators-of-J_n}
The cactus group \( J_n \) is generated by the elements \( s_{1q} \) for \( 2 \le q \le n \).
\end{Lemma}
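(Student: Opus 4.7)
The plan is to express an arbitrary generator $s_{pq}$ with $p \geq 2$ as a word in the proposed generators $\{s_{1q} : 2 \le q \le n\}$, using only relation (iii) from the presentation. Since $[p,q] \subseteq [1,q]$, relation (iii) applied to the pair $(s_{1q}, s_{pq})$ yields
\begin{equation*}
  s_{1q}\, s_{pq} \;=\; s_{uv}\, s_{1q},
\end{equation*}
where $v = \hat{s}_{1q}(p) = q+1-p$ and $u = \hat{s}_{1q}(q) = 1$. Multiplying on the left by $s_{1q}$ and using $s_{1q}^2 = 1$ from relation (i) gives the identity
\begin{equation*}
  s_{pq} \;=\; s_{1q}\, s_{1,\,q+1-p}\, s_{1q}.
\end{equation*}

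For $p \geq 2$ we have $2 \le q+1-p \le q-1$, so $s_{1,q+1-p}$ is itself one of the proposed generators. Hence every generator $s_{pq}$ of $J_n$ lies in the subgroup generated by $\{s_{1q} : 2 \le q \le n\}$, which proves the lemma.

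The only potential subtlety is bookkeeping for relation (iii): one must be careful that the roles of the ``outer'' and ``inner'' intervals in the statement of the relation are applied in the correct direction, and that the reversal $\hat{s}_{1q}$ sends $p \mapsto q+1-p$ and $q \mapsto 1$ (rather than the other way). Beyond this, no induction or case analysis is needed.
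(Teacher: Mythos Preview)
Your proof is correct and is essentially the same as the paper's: both derive the identity \( s_{pq} = s_{1q}\, s_{1(q-p+1)}\, s_{1q} \) from relation~(iii) (together with \( s_{1q}^2 = 1 \)) and conclude immediately. Your version is slightly more explicit in verifying the indices produced by \( \hat{s}_{1q} \), but the argument is identical.
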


\begin{proof}
By the relations for \( J_n \) given above we have \( s_{pq} = s_{1q}s_{1(q-p+1)}s_{1q} \). Since the elements \( s_{pq} \) generate \( J_n \) so do the \( s_{1q} \).
\end{proof}

In~\cite{Henriques:2006is} it is shown \( \pi_1(\M{n+1}(\RR)) = PJ_n \). The space \( \M{n+1} \) also has an action of \( S_n \) by permuting the first \( n \) marked points. This leaves the real points stable. The equivariant fundamental group \( \pi^{S_n}_1(\M{n+1}(\RR)) \) is \( J_n \). The equivariant loop in \( \M{n+1}(\RR) \) corresponding to \( s_{pq} \in J_n \) is \( (\alpha,\hat{s}_{pq}) \), where \( \alpha \) is the path from a basepoint \( C \) passing through the wall reversing the labels \( p, \ldots, q \) to \( \hat{s}_{pq}\cdot C \).

\subsection{Growth diagrams}
\label{sec:growth-diagrams}

In this section we recall the notion of a growth diagram. Growth diagrams give an interpretation of \emph{jeu de taquin} slides for standard tableaux using combinatorial objects built on subsets of the lattice \( \ZZ^2 \). When we draw this lattice we will depict the second coordinate as increasing northward on the vertical axis and (perhaps counter intuitively) we depict the first coordinate as increasing \emph{westward} on the horizontal axis. This choice is made in order to be consistent with the notation in~\cite{Speyer:2014gg}. 

\subsubsection{Growth diagrams}
\label{sec:growth-diagrams-def}
  
Let \( \II \) be a subset of \( \ZZ^2_+ = \setc{(i,j) \in \ZZ^2}{j-i \ge 0} \). A \emph{growth diagram} on \( \II \) is a map \( \gamma\map{\II}{\Part} \) obeying the following rules:
\begin{enumerate}[(i)]
\item\label{item:j-i=k_partition} If \( j-i = k \ge 0 \) then \( \gamma_{ij} \) is a partition of \( k \).
\item\label{item:add-singe-box} Suppose \( (i,j) \in \II \). Then if \( (i-1,j) \) (respectively \( (i,j+1) \)) is in \( \II \) then \( \gamma_{ij} \subset \gamma_{(i-1)j} \) (respectively \( \gamma_{ij} \subset \gamma_{i(j+1)} \)).
\item\label{item:jdt-condition} If \( (i,j), (i-1,j), (i,j+1) \) and \( (i-1,j+1) \in \II \) and \( \gamma_{(i-1)(j+1)} \bs \gamma_{ij} \) consists of two boxes that \emph{do not} share an edge then \( \gamma_{(i-1)j} \neq \gamma_{i(j+1)} \).
\end{enumerate}

In view of condition~\ref{item:j-i=k_partition}, condition~\ref{item:add-singe-box} means if we move one step north or one step east in \( \II \), we add a single box. Condition~\ref{item:jdt-condition} means if we have an entire square in \( \II \), and if there are two possible ways to go from \( \gamma_{ij} \) to \( \gamma_{(i-1)(j+1)} \) by adding boxes then the two paths around the square should be these two different ways.

A \emph{path} through \( \II \subset \ZZ^2_+ \) is a series of steps from one vertex to another using only northward and eastward moves (i.e. only ever increasing \( j \) and decreasing \( i \) and thus \( j-i \) is a strictly increasing function on the path). Given a growth diagram \( \gamma \) in \( \II \), every path determines a standard tableau. 

Given a rectangular region in a growth diagram, conditions~\ref{item:j-i=k_partition} and~\ref{item:jdt-condition} mean the entire rectangular region is determined by specifying the tableaux along any path from its bottom left to top right corner.

\subsubsection{The Sch\"utzenberger involution in growth diagrams}
\label{sec:schutz-invol-growth}

We now explain how growth diagrams encode the jeu de taquin slides on standard tableaux (see~\cite{Fulton:1997vaa} for a definition). Let \( \II \) be a rectangular region in \( \ZZ^2_+ \) only one step tall. So \( \II = \setc{(i,j)}{i = r, r+1 \ldots, s \text{ and } j = t, t+1} \), for some \( s \) and \( t \) so that \( (s,t) \) is the bottom left hand corner and \( t-s \ge 0 \). Let \( \gamma \)  be a growth diagram on \( \II \) and set \( \mu = \gamma_{st}, \nu = \gamma_{s(t+1)}, \rho = \gamma_{rt} \) and \( \lambda = \gamma_{r(t+1)} \). These are the four partitions at the corners of \( \II \). 

Let \( T \) be the \( \lambda \bs \nu \)-tableau given by the top edge of \( \II \) and \( S \) the \( \rho \bs \mu \)-tableau given by the bottom edge. See Figure~\ref{fig:jdt-growth}. The partition \( \mu \) determines a node, denoted \( \circ \), on the north-western boundary of \( T \) which we can slide into. Similarly the partition \( \lambda \) determines a node on the south-eastern boundary of \( S \) denoted \( \ast \), which we can slide into.

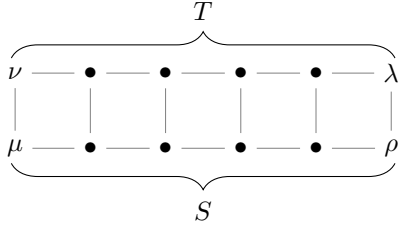
\begin{figure}
  \centering
  \begin{tikzpicture}[scale=1]
    \node (00) at (0,0) {\( \mu \)};
    \node (01) at (0,1) {\( \nu \)};

    \node (50) at (5,0) {\( \rho \)};
    \node (51) at (5,1) {\( \lambda \)};

    \node (10) at (1,0) {\( \bullet \)};
    \node (11) at (1,1) {\( \bullet \)};
    \node (20) at (2,0) {\( \bullet \)};
    \node (21) at (2,1) {\( \bullet \)};
    \node (30) at (3,0) {\( \bullet \)};
    \node (31) at (3,1) {\( \bullet \)};
    \node (40) at (4,0) {\( \bullet \)};
    \node (41) at (4,1) {\( \bullet \)};

    \foreach \from/\to in {00/01,10/11,20/21,30/31,40/41,50/51,00/10,10/20,20/30,30/40,40/50,01/11,11/21,21/31,31/41,41/51} 
        \draw[help lines] (\from) -- (\to); 

    \draw[decorate,decoration={brace,amplitude=10pt,mirror}] (-0.05,-0.2) -- (5.05,-0.2) node[midway,yshift=-18] {\( S \)};
    \draw[decorate,decoration={brace,amplitude=10pt}] (-0.05,1.2) -- (5.05,1.2) node[midway,yshift=18] {\( T \)};
  \end{tikzpicture}
  \caption{The jeu de taquin growth diagram}
  \label{fig:jdt-growth}
\end{figure}

\begin{Proposition}
  \label{prp:grwoth-diagams-jdt}
The tableau \( S \) is the result of the jeu de taquin slide of \( T \) into \( \circ \) and \( T \) is the result of the slide of \( S \) into \( \ast \).
\end{Proposition}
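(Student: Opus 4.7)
My strategy is to reduce to the single-square case by induction on the width \( s - r \) of the strip \( \II \), and then verify the growth-diagram rules directly encode a single jeu de taquin step.

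For the base case \( s - r = 1 \), the skew shape \( \lambda/\mu \) consists of exactly two boxes, and condition (iii) of the growth-diagram axioms splits the analysis into two sub-cases. If the two boxes share an edge (that is, they form a horizontal or vertical domino), then only one intermediate partition of size \( |\mu|+1 \) can lie between \( \mu \) and \( \lambda \), forcing \( \nu = \rho \); the tableau \( T \) then consists of a single box at \( \lambda/\nu \) adjacent to the corner \( \circ = \nu/\mu \), and the jeu de taquin slide moves this box along the domino into \( \circ \), producing exactly \( S \). If the two boxes are non-adjacent, condition (iii) forces \( \nu \neq \rho \), so \( \nu \) and \( \rho \) are the two distinct partitions obtained from \( \mu \) by adding each box separately; in this case \( T \) and \( S \) coincide (both have the unique box at the common position \( \lambda/\nu = \rho/\mu \)), and since \( \circ \) is non-adjacent to this box the slide terminates without motion and yields \( S \). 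The reverse assertion, sliding \( S \) into \( \ast \), follows by the symmetric analysis exchanging the roles of top and bottom.

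For the inductive step with \( s - r > 1 \), I split \( \II \) vertically into two narrower one-step-tall sub-strips \( \II_L \) and \( \II_R \) sharing a common interior column. The inductive hypothesis applied to each piece says that the bottom edge is obtained from the top edge by a jeu de taquin slide into the appropriate inner corner. To conclude, I chain these together: the full slide on \( \II \) is realized by first performing the slide on \( \II_R \), whose output determines the inner corner that seeds the subsequent slide on \( \II_L \), with the tableaux along the shared interior column being precisely the intermediate states of a single jeu de taquin slide through the full strip.

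The principal obstacle is verifying that this iterated procedure genuinely reproduces a single slide of \( T \) into \( \circ \), rather than an ad hoc composition of unrelated slides. This reduces, via the base case, to the fact that at each intermediate square the local growth rule coincides with an elementary step of the sliding process, so that the dot traced by the slide through \( T \) passes through the shared interior column at exactly the partition predicted by the growth diagram. Once this is established, the bottom edge of \( \II \) matches the result of the full slide, completing the induction.
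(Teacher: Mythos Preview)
The paper does not prove this itself; it simply cites Stanley, \emph{Enumerative Combinatorics} Vol.~2, Proposition~A1.2.7. Your outline is essentially the standard argument one finds there, so you are reproducing the cited proof rather than departing from the paper.

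Your inductive step, however, is not fully justified. You assert that the single slide of \( T \) into \( \circ \) factors as the slide through one sub-tableau followed by the slide through the other, with the intermediate hole position given by the shared column of the growth diagram. This is correct, but it rests on an observation you never state: during a jeu de taquin slide the hole always swaps with the \emph{smaller}-labelled of its available neighbours, so when sliding all of \( T \) into \( \circ \), the hole first completes its passage through \( T|_{1,\ldots,k} \) --- arriving at exactly the terminal position \( h_k \) of the restricted slide, since box \( k+1 \) is never preferred over a box of smaller label --- before it can touch box \( k+1 \). Only then does it interact with box \( k+1 \), and that single interaction is your base case applied to the next unit square. Without this label-ordering argument there is no a priori reason the two sub-slides concatenate into one coherent slide of \( T \); your final paragraph names the obstacle but does not actually resolve it. A minor point: your use of \( \II_L \) and \( \II_R \) appears inverted relative to the paper's convention that the first coordinate increases \emph{westward} --- the corner \( \circ \) sits at the western end \( (s,t{+}1)/(s,t) \), so the slide must begin there, not on \( \II_R \).
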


\begin{proof}
See~\cite{Stanley:1999eh}, Proposition~A1.2.7.
\end{proof}

We now explain how this relates to the Sch\"utzenberger involution for standard tableaux. Suppose \( T \in \SYT(\lambda \bs \mu) \) where \( \mu \) is a partition of \( k \) and \( \lambda \bs \mu \) has \( l \) boxes. Let 
\begin{equation*}
\II = \setc{(i,j) \in \ZZ^2_+}{0 \le i \le l, k \le j \le k+l, \text{ and } i+k \le j }.
\end{equation*}
That is, \( \II \) is a triangle with vertices \( (0,k), (l,k+l) \) and \( (0,k+l) \).
Any growth diagram, \( \gamma \), on \( \II \) can be computed recursively if we know the value of \( \gamma \) on either on the horizontal or vertical side of \( \II \). Define the growth diagram \( \gamma_T \) on \( \II \) by setting \( \gamma_T(r,k+r) = \mu \) for any \( 0 \le r \le l \). That is, on the diagonal edge of \( \II \), \( \gamma_T \) is of constant value \( \mu \). We set the sequence of partitions
\begin{equation*}
\gamma_T(l,k+l) \subset \gamma_T(l-1,k+l) \subset \ldots \subset \gamma_T(0,k+l)
\end{equation*}
on the horizontal edge of \( \II \) so they determine the standard tableau \( T \). By the observation above, this determines \( \gamma_T \) on all of \( \II \). As an immediate consequence of the definition and of Proposition~\ref{prp:grwoth-diagams-jdt} we obtain the following corollary.

\begin{Corollary}
\label{cor:growth-schutz}
Let \( \gamma_T \) be the growth diagram above, associated to a standard tableau \( T \in \SYT(\lambda \bs \mu) \). The standard tableau determined by the sequence of partitions
along the vertical edge of \( \II \) is \( \evac(T) \), the Sch\"utzenberger involution of \( T \).
\end{Corollary}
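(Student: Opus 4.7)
I would prove the corollary by induction on $l = |\lambda \setminus \mu|$, with the base case $l = 0$ trivial. The idea is to peel off the top row of $\II$ and recognise it as a single jeu de taquin slide via Proposition~\ref{prp:grwoth-diagams-jdt}.

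Let $c_1$ denote the cell of $T$ containing entry $1$, so that $\gamma_T(l-1, k+l) = \mu \cup \{c_1\}$. The rectangular strip $\II_{\text{top}} = \{(i,j) : 0 \le i \le l-1,\ k+l-1 \le j \le k+l\}$ sits inside $\II$, is one step tall, and has its south-western corner $\gamma_T(l-1, k+l-1) = \mu$ on the diagonal. Its top edge encodes $T$ with entry $1$ removed, i.e.\ a tableau of shape $\lambda \setminus (\mu \cup \{c_1\})$ with entries $2, \ldots, l$. Proposition~\ref{prp:grwoth-diagams-jdt} applied to $\II_{\text{top}}$ identifies its bottom-edge tableau $S$ as the jeu de taquin slide of this tableau into the inner corner $c_1$: writing $\rho = \gamma_T(0, k+l-1)$, the tableau $S$ has shape $\rho \setminus \mu$, and the slide exits at the outer corner $c_l = \lambda \setminus \rho$.

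The sub-triangle $\II' = \{(i,j) \in \II : j \le k+l-1\}$, with vertices $(0,k)$, $(l-1, k+l-1)$ and $(0, k+l-1)$, satisfies the hypotheses of the corollary with $(\lambda, l)$ replaced by $(\rho, l-1)$, constant value $\mu$ on its diagonal, and top-edge tableau $S$ (relabelled to entries $1, \ldots, l-1$). By the inductive hypothesis its vertical edge encodes $\evac(S)$; the vertical edge of $\II$ is this together with one extra step $\rho \subset \lambda$ at the top, which places entry $l$ at $c_l$. This matches precisely the recursive description of $\evac(T)$ via iterated inward jeu de taquin: remove entry $1$, slide the resulting hole outward to an outer corner, record that corner as the position of the largest label, and recurse on the smaller skew tableau. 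The main conceptual point is this translation between the local two-dimensional growth-diagram rules and the one-dimensional recursion defining $\evac$; once both descriptions are reconciled, the induction closes immediately from Proposition~\ref{prp:grwoth-diagams-jdt} together with the uniqueness of $\gamma_T$ given its top edge and its diagonal.
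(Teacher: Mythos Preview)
Your proof is correct and spells out exactly what the paper leaves implicit when it calls the corollary an ``immediate consequence of the definition and of Proposition~\ref{prp:grwoth-diagams-jdt}''. Peeling off rows of the triangle one at a time and recognising each as a single jeu de taquin slide is precisely the recursive definition of evacuation, so the two arguments coincide.
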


This explains why the Sch\"utzenberger involution is in fact an involution, at least in the case of standard tableaux.

\subsection{Dual equivalence classes}
\label{sec:dual-equiv-classes}

We now describe Haiman's~\cite{Haiman:1992hu} notion of dual equivalence. This is an equivalence relation on skew tableaux, dual to slide equivalence in the sense that it preserves the Q-symbol of a word given by the RSK correspondence.

\begin{Definition}
\label{def:dual-equiv}
Two semistandard skew tableaux, \( T \) and \( T' \) of the same shape, are called \emph{dual equivalent}, denoted \( T \sim_D T' \), if for all meaningful sequence of slides, applying the sequence of slides to \( T \) and \( T' \) results in tableaux of the same shape.
\end{Definition}

\subsubsection{Dual equivalence is local}
\label{sec:dual-equiv-local}

We have the following proposition which tells us that dual equivalence is a \emph{local} operation. That is, we can replace a subtableau with a dual equivalent one and the resulting tableau will be dual equivalent to the original one.

\begin{Proposition}[{\cite[Lemma~2.1]{Haiman:1992hu}}]
  \label{prp:local-dual-equiv}
Suppose \( X \), \( Y \), \( S \) and \( T \) are semistandard tableaux such that \( X \cup T \cup Y \) and \( X \cup S \cup Y \) are semistandard tableaux. If \( S \sim_D T \) then \( X \cup T \cup Y \sim_D X \cup S \cup Y \).
\end{Proposition}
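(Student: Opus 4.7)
The plan is to reduce the statement to a single jeu de taquin slide on $X\cup T\cup Y$ and analyze how that slide interacts with the subregion occupied by $T$. By the definition of dual equivalence it is enough, by induction on the length of the slide sequence, to establish the following one-step version: for a single meaningful slide, $X\cup T\cup Y$ and $X\cup S\cup Y$ produce skew tableaux of the same outer shape, whose $T$- and $S$-parts remain dual equivalent to each other so that the induction can continue. The proposition then follows by iterating over any chosen sequence of slides.

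For the single slide I would track the empty box as it travels through $X\cup T\cup Y$. Before the empty box meets $T$, its motion is dictated entirely by entries of $X$ and therefore agrees with its motion in $X\cup S\cup Y$. If it never reaches $T$, we are done. Otherwise, once the empty box enters the region of $T$, its subsequent motion is recorded as a sequence of jeu de taquin operations on the skew tableau $T$ alone, started at a corner determined by the interface with $X$. The hypothesis $S\sim_D T$ then says precisely that applying the corresponding sequence of slides to $S$ produces a tableau of the same shape; in particular the empty box exits $T$ (into $Y$ or off the shape entirely) through the same cell in both cases. From that exit cell onward the slide is determined by $Y$, so the two processes agree and the final outer shapes coincide. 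A parallel argument applies to outward slides.

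The main obstacle is to make the decomposition of the slide along the boundary of $T$ precise: at cells where $T$ meets $X$ or $Y$ the local jeu de taquin comparison involves entries from both sides, and one must verify that the recorded sequence of ``internal'' moves on $T$ really is a legitimate sequence of meaningful slides on $T$ itself, so that the definition of $\sim_D$ can be invoked. Once this bookkeeping is in place the conclusion is immediate from the defining property of dual equivalence applied inside $T$, as in Haiman's original argument, and the induction closes because the updated $T'$ and $S'$ differ by the same sequence of slides and hence remain dual equivalent.
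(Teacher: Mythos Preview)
The paper does not give its own proof of this proposition; it simply cites Haiman's Lemma~2.1 and moves on. Your sketch is essentially Haiman's original argument and is correct.

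One point worth making explicit is the resolution of the boundary issue you flag. In the intended setting the entries of $X$ are all smaller than those of $T$, which in turn are smaller than those of $Y$ (this is what the notation $X\cup T\cup Y$ means for standard tableaux, and the semistandard case reduces to it by standardization). Consequently, whenever the empty box sees an $X$-entry and a $T$-entry as its two neighbours, the $X$-entry always wins, and similarly at the $T$/$Y$ interface. So there is no interleaving: the slide on $X\cup T\cup Y$ factors cleanly as a slide through $X$ (which fixes the inner corner at which $T$ is entered), followed by a single genuine jeu de taquin slide on $T$ alone, followed by a slide through $Y$. The ``internal'' moves on $T$ therefore constitute exactly one meaningful slide, and the hypothesis $S\sim_D T$ applies directly. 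Your observation that $S'\sim_D T'$ afterwards (since any further slide sequence on $S',T'$ extends to one on $S,T$) is correct, and the induction closes.
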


A skew-shape is called \emph{normal} if it has a unique top left corner, and \emph{antinormal} if it has a unique bottom right corner. That is, if it is the north-western or south-eastern part of a rectangle respectively. We have the following important properties of dual equivalence which we will use many times.

\begin{Theorem}
  \label{thm:props-of-dual-equiv}
Dual equivalence has the following properties.
\begin{enumerate}[(i)]
\item\label{item:normal-all-de} All tableaux of a given normal or antinormal shape are dual equivalent.
\item\label{item:slide-de-unique} The intersection of any slide equivalence class and any dual equivalence class is a unique tableaux.
\item\label{item:words-de-if-Q} Two words are dual equivalent if and only if their Q-symbols agree.
\end{enumerate}
\end{Theorem}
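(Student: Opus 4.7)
The plan is to follow Haiman's arguments in~\cite{Haiman:1992hu}, combining the locality of dual equivalence (Proposition~\ref{prp:local-dual-equiv}) with induction and the growth-diagram picture of RSK introduced in Section~\ref{sec:growth-diagrams}.

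For part~(i), induct on \( |\lambda| \). Given two SYTs \( T, T' \) of a normal shape \( \lambda \): if the entry \( n = |\lambda| \) sits at the same outer corner \( c \) in both, remove \( c \) and apply the inductive hypothesis to \( T|_{1,n-1} \) and \( T'|_{1,n-1} \) (still of normal shape), then re-attach the box labelled \( n \) using Proposition~\ref{prp:local-dual-equiv}. Otherwise, one moves the position of \( n \) by a sequence of Haiman's elementary dual equivalences---local swaps on three consecutive entries---each of which is checked to be a dual equivalence directly from Definition~\ref{def:dual-equiv}, and then reduces to the previous case. The antinormal case follows by rotating the ambient rectangle through \( 180^\circ \), which swaps inner and outer corners and hence exchanges forward and reverse jeu de taquin slides.

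For part~(ii), uniqueness follows because slide-equivalent tableaux share a common rectification \( U \), and from \( U \) the reverse slides to any given skew shape produce a unique tableau by confluence of jeu de taquin; combined with part~(i), which forces a dual equivalence class to consist of tableaux of a single shape, this yields at most one element in the intersection of a slide class with a dual equivalence class. For existence, pick any \( V \) in the dual equivalence class \( \Dd \), with rectification \( U' \), and let \( \sigma \) be the reverse-slide sequence with \( \sigma(U') = V \); part~(i) gives \( U \sim_D U' \), and since dual equivalence is preserved under slides (directly from the definition), \( \sigma(U) \sim_D V \), so \( \sigma(U) \) lies in both classes.

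For part~(iii), the forward direction is that dual equivalence preserves the \( Q \)-symbol: in the growth-diagram formulation, the sequence of shapes encoding \( Q(w) \) is read off the boundary of a diagram whose interior can be altered by elementary dual equivalences without affecting that boundary. The converse uses part~(ii): if \( Q(w) = Q(w') \), let \( x \) be the unique element of the intersection of the dual equivalence class of \( w \) with the slide class of \( w' \); by the forward direction \( Q(x) = Q(w) = Q(w') \), and \( P(x) = P(w') \) since slide equivalence preserves the \( P \)-symbol, so \( x = w' \) by the injectivity of RSK, hence \( w' \sim_D w \). The main technical obstacle is reconciling the slide-based definition of \( \sim_D \) with the algorithmic definition of \( Q \), but this translation is precisely what the growth-diagram machinery of Section~\ref{sec:growth-diagrams} accomplishes, after which the argument reduces to bookkeeping.
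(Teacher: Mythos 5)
The paper's own proof of this statement is a one-line citation: parts~(i), (ii), and~(iii) are Proposition~2.14, Theorem~2.13, and Theorem~2.12 of Haiman's paper~\cite{Haiman:1992hu}, and nothing further is argued. You instead attempt to reconstruct Haiman's arguments, which is a legitimate alternative route, but a few of your steps have real gaps.

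In part~(ii), the claim that \emph{``from $U$ the reverse slides to any given skew shape produce a unique tableau by confluence of jeu de taquin''} is false as stated: given a normal tableau $U$ and a target skew shape, there are in general many tableaux of that shape rectifying to $U$, and confluence only gives well-definedness of the \emph{forward} rectification. The correct uniqueness argument uses reversibility of individual slides together with dual equivalence's shape-matching: rectify $T$ by a slide sequence $\sigma$; dual equivalence guarantees $\sigma$ applies to $T'$ with the same intermediate shapes, so $\sigma(T) = \sigma(T') = U$; then run $\sigma$ backwards, noting that each reverse slide is uniquely determined by the target shape, so $T = T'$. Your existence argument is essentially right but implicitly uses that dual equivalence determines the shape of the rectification (needed to ensure $U$ and $U'$ have the same normal shape so that part~(i) applies); this should be stated. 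In part~(i) the assertion that $n$ can always be moved to any outer corner by elementary dual equivalences is itself the main content of Haiman's Proposition~2.14 and shouldn't be taken for granted. In part~(iii) the forward direction is the technical heart and your one-sentence appeal to growth diagrams doesn't constitute an argument, though the deduction of the converse from~(i), (ii), and the forward direction plus injectivity of RSK is clean and correct. Given that the paper simply cites Haiman, the cleanest fix is to do likewise, or else flesh out the steps above.
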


\begin{proof}
Properties~\ref{item:normal-all-de}, \ref{item:slide-de-unique} and~\ref{item:words-de-if-Q} are Proposition~2.14, Theorem~2.13 and Theorem~2.12 in~\cite{Haiman:1992hu} respectively.
\end{proof}

\subsubsection{Shuffling dual equivalence classes}
\label{sec:shuffl-dual-equiv}

Given a rectangular growth diagram let \( S_1 \) and \( S_2 \) denote the standard tableaux defined by the western edge and the northern edge respectively and let  \( T_1 \) and \( T_2 \) denote the standard tableaux defined by the southern and eastern edges respectively.

\begin{Proposition}[{\cite[Proposition~7.6]{Speyer:2014gg}}]
  \label{prp:shuffle-dual-equiv}
The dual equivalence classes of \( T_1 \) and \( T_2 \) remain unchanged if we replace either (or both) \( S_1 \) or \( S_2 \) by dual equivalent tableaux.
\end{Proposition}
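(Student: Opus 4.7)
My plan is to exhibit a rectangular growth diagram as an iterated jeu de taquin construction via Proposition~\ref{prp:grwoth-diagams-jdt}, and then reduce the preservation claim to a local check using Proposition~\ref{prp:local-dual-equiv}. First, observe that the growth-diagram axioms are symmetric under the reflection exchanging the east and north directions, which exchanges $S_1 \leftrightarrow S_2$ and $T_1 \leftrightarrow T_2$. It thus suffices to prove invariance under modifying only $S_1$ (with $T_1$ held fixed), that is, to show that if $S_1 \sim_D S_1'$ then the resulting $T_2'$ and $S_2'$ satisfy $T_2 \sim_D T_2'$ and $S_2 \sim_D S_2'$.

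Next, decompose the rectangle into horizontal strips of height $1$. By Proposition~\ref{prp:grwoth-diagams-jdt} each strip realises a single jeu de taquin slide: the south-edge tableau of the strip slides into the inner corner determined by the single box of $S_1$ on the strip's west edge, producing the north-edge tableau, with the popped-out corner recorded as a single box of $T_2$ on the east edge. Reading strips from bottom to top, $(T_2, S_2)$ is produced by applying to $T_1$ a sequence of slides whose inner corners are prescribed by the boxes of $S_1$ taken in order.

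By Proposition~\ref{prp:local-dual-equiv} I may further reduce to the case in which $S_1$ and $S_1'$ differ by a single elementary dual equivalence on a short window of consecutive entries. This alters the slide-direction sequence only over a short contiguous band of horizontal strips; outside this band the slides agree in the two scenarios and the intermediate tableaux coincide. The proposition therefore reduces to the local claim that applying two dual-equivalent slide-direction sequences to a common starting tableau produces final tableaux (on the north side) and pop-out records (on the east side) that are themselves dual equivalent.

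The main obstacle is this local claim. I would handle it by a direct finite case analysis of the elementary dual equivalence moves on three-box standard tableaux, explicitly running the jeu de taquin computations and using Theorem~\ref{thm:props-of-dual-equiv}\ref{item:slide-de-unique} (which pins down a tableau uniquely by its slide class together with its dual equivalence class) to match up the outputs. Granting this local check, combining the unchanged slides outside the band with the local dual equivalences inside it yields $T_2 \sim_D T_2'$ and $S_2 \sim_D S_2'$, as required.
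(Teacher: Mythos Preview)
The paper does not give a proof of this proposition; it is quoted from Speyer without argument. I will therefore assess your outline on its own terms.

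Your overall plan --- decompose into height-one strips via Proposition~\ref{prp:grwoth-diagams-jdt}, reduce to elementary dual equivalences via Proposition~\ref{prp:local-dual-equiv}, and finish with a finite local check --- is the right shape of argument and is essentially how such statements are proved in Haiman's and Speyer's work. But the execution contains two concrete errors and one genuine gap.

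First, your symmetry claim is wrong. The reflection exchanging the east and north directions is the reflection across the SW--NE diagonal; it fixes the SW and NE corners and swaps NW with SE. Hence it swaps the west edge with the south edge and the north edge with the east edge, i.e.\ $S_1 \leftrightarrow T_1$ and $S_2 \leftrightarrow T_2$, \emph{not} $S_1 \leftrightarrow S_2$ and $T_1 \leftrightarrow T_2$. Your reduction to ``vary $S_1$ with $T_1$ held fixed'' therefore does not follow from the stated symmetry.

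Second, you have the slide direction in Proposition~\ref{prp:grwoth-diagams-jdt} reversed. In a height-one strip the \emph{north} tableau slides into the west box $\circ$ to give the \emph{south} tableau; equivalently, the south tableau slides \emph{outward} into the east box $\ast$ to give the north one. So reading strips bottom-to-top you are sliding into the boxes of $T_2$, not of $S_1$. The correct iterated picture is: start from $S_2$ on top, slide downward through the inner corners prescribed by $S_1$, arrive at $T_1$, with $T_2$ recording the exit boxes on the east side. Once you make this correction, the statement to prove becomes ``keep $S_2$, vary $S_1$ within its dual equivalence class'' --- which is already one of the two cases of the proposition as stated, with no reparametrisation needed. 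The companion case (keep $S_1$, vary $S_2$) is then the easy one: by the very definition of dual equivalence the slides pass through identical shapes, so $T_2 = T_2'$ outright and $T_1 \sim_D T_1'$ since slides preserve dual equivalence.

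Third, and most seriously, you do not carry out the local check. After reducing to an elementary dual equivalence on $S_1$, the assertion that the two resulting slide sequences applied to $S_2$ produce dual-equivalent outputs and dual-equivalent exit records is exactly the content of the proposition. Writing ``granting this local check'' is granting the result. In Haiman's framework this is where the genuine work sits, and it requires an actual argument, not a promissory case analysis.
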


Let \( \delta_1 \) and \( \delta_2 \) be dual equivalence classes such that the shape of \( \delta_2 \) extends the shape of \( \delta_1 \). Choose representatives \( S_1 \) and \( S_2 \) for \( \delta_1 \) and \( \delta_2 \) respectively. Construct the unique rectangular growth diagram with western and northern edges given by \( S_1 \) and \( S_2 \) respectively. Define \( \varepsilon_1 \) and \( \varepsilon_2 \) to be the dual equivalence classes of the southern and eastern edges respectively. Proposition~\ref{prp:shuffle-dual-equiv} implies that \( \varepsilon_1 \) and \( \varepsilon_2 \) are independent of the representatives \( S_1 \) and \( S_2 \) chosen.

\begin{Definition}
  \label{def:shuffling-classes}
If \( \delta_1 \) and \( \delta_2 \) are, as above, dual equivalence classes such that the shape of \( \delta_2 \) extends the shape of \( \delta_1 \) we say \( (\varepsilon_1,\varepsilon_2)\) are the \emph{shuffle} of  \( (\delta_1, \delta_2) \).
\end{Definition}

\subsubsection{Dual equivalence growth diagrams}
\label{sec:dual-equiv-growth-diagrams}

We now define the notion of dual equivalence for growth diagrams (introduced in~\cite{Speyer:2014gg}). First we fix an function \( m\map{\ZZ}{\ZZ_{>0}} \) which we call the \emph{interval}. We define several auxiliary functions using \( m \). We define a function \( \hat{m} : \ZZ \longrightarrow \ZZ  \)
\begin{align*}
  \hat{m}(i) &\deq
                 \begin{cases}
                   1+\sum_{k=1}^{i-1} m(k) & \text{if } i > 0 \\
                   1-\sum_{k=i}^0 m(k) & \text{if } i \le 0,
                 \end{cases} \\
\intertext{a function \( \bar{m} : \ZZ^2_+ \longrightarrow \ZZ^2_+ \), \( \bar{m}(i,j) \deq \left( \hat{m}(i),\hat{m}(j) \right) \) and a function \(   m_s: \ZZ^2_+ \longrightarrow \NN  \)}
  m_s(i,j) &\deq \hat{m}(j) - \hat{m}(i) = \sum_{k=i}^{j-1} m(k).
\end{align*}
In particular \( m_s(i,i) = 0 \), \( m_s(i,i+1) = m(i) \) and if \( m \) is the constant function \( 1 \) then \( m_s(i,j) = j-i \). 

Consider the graph with vertices \( \ZZ^2_+ \) and edges \( a_{ij} \) between \( (i,j) \) and \( (i-1,j) \) and edges \( b_{ij} \) between vertices \( (i,j) \) and \( (i,j+1) \). If we embed \( \ZZ^2_+ \subset \RR^2 \), these as simply the horizontal and vertical unit intervals between the points of \( \ZZ^2_+ \).  If \( \II \subset \ZZ^2_+ \), we call an edge of \( \ZZ^2_+ \) \emph{internal} to \( \II \) if both of its endpoints are in \( \II \). 

\begin{Definition}
\label{def:dual-equiv-growth}
  A \emph{dual equivalence growth diagram} on \( \II \) with interval \( m \) is a map \( \gamma\map{\II}{\Part} \) as well as an assignment of a dual equivalence class \( \alpha_{ij} \) (respectively \( \beta_{ij} \)) to every edge \( a_{ij} \) (respectively \( b_{ij} \)) internal to \( \II \), obeying the following rules.
  \begin{enumerate}[(i)]
  \item\label{item:j-i=mij_partition} If \( (i,j) \in \II \) then \( \gamma_{ij} \) is a partition of \( m_s(i,j) \).
  \item\label{item:add-controlled-no-boxes} If \( a_{ij} \) (respectively \( b_{ij} \)) is internal to \( \II \) then \( \gamma_{ij} \subset \gamma_{(i-1)j} \) (resp. \( \gamma_{ij} \subset \gamma_{i(j+1)} \)).
  \item\label{item:dual-equiv-jdt-condition} If \( a_{ij}, b_{(i-1)j}, b_{ij} \) and \( a_{i(j+1)} \in \II \) then \( (\alpha_{ij},\beta_{(i-1)j}) \) is the shuffle of \( (\beta_{ij},\alpha_{i(j+1)}) \).
  \end{enumerate}
\end{Definition}

We should think of the interval \( m \) as defining how many boxes we are allowed to add with each step though the lattice. Indeed, if we wish to move one step east from \( (i,j) \), \( m_s \) increases by \( m(i-1) \) and if we wish to move one step north, \( m_s \) increases by \( m(j) \). Thus the partition \( \gamma_{ii} \) must be the empty partition and \( \gamma_{i(i+1)} \) is a partition of \( m(i) \). This means when \( m \) is the constant function \( m(i) = 1 \) our definition coincides with that for a ordinary growth diagram for \( \II \). 

We can also think of dual equivalence growth diagrams as equivalence classes of certain growth diagrams. Let \( \tilde{\gamma} \) be a growth diagram on \( \tilde{\II} \subset \ZZ^2_+ \). We say \( \tilde{\II} \) is \emph{adapted} to an interval \( m\map{\ZZ}{\ZZ_{>0}} \) if it has the following property: If \( \tilde{\II} \) contains each of the four vertices \( \bar{m}(i,j), \bar{m}(i-1,j), \bar{m}(i,j+1), \) and \( \bar{m}(i-1,j+1) \), then \( \tilde{\II} \) contains all of the vertices in the rectangular region they bound.
\begin{center}
  \begin{tikzpicture}[scale=0.4]
    \fill[gray,opacity=0.3] (-3,0) rectangle (3,3);
    \foreach \i in {-3,-2,-1,0,1,2,3} {
      \foreach \j in {0,1,2,3} { 
        \draw[fill] (\i,\j) circle (1.5pt);
      } 
    }
    \node[anchor=north east] at (-3,0) {\( \bar{m}(i,j) \)};
    \node[anchor=north west] at ( 3,0) {\( \bar{m}(i-1,j) \)};
    \node[anchor=south east] at (-3,3) {\( \bar{m}(i,j+1) \)};
    \node[anchor=south west] at ( 3,3) {\( \bar{m}(i-1,j+1) \)};

    \draw[fill] (-3,0) circle (3pt);
    \draw[fill] ( 3,0) circle (3pt);
    \draw[fill] (-3,3) circle (3pt);
    \draw[fill] ( 3,3) circle (3pt);
  \end{tikzpicture}
\end{center}

\begin{Definition}
  \label{def:reduction-mod-m}
  If \( \tilde{\II} \) is adapted to \( m \), the \emph{reduction modulo \( m \)} of a growth diagram \( \tilde{\gamma} \) is defined to be the map \( \gamma\map{\II}{\Part} \) for
  \begin{equation*}
    \II = \setc{(i,j) \in \ZZ^2_+}{\left( \hat{m}(i),\hat{m}(j) \right) \in \tilde{\II}},
  \end{equation*}
given by \( \gamma = \tilde{\gamma} \circ \bar{m} \), along with the set of dual equivalence classes
\begin{itemize}
\item  \( \alpha_{ij} \), the dual equivalence class defined by the horizontal path, in \( \tilde{\gamma} \), from \( \bar{m}(i,j) \) to \( \bar{m}(i-1,j) \) and
\item \( \beta_{ij} \), the dual equivalence class of the tableaux defined by the vertical path, in \( \tilde{\gamma} \), from \( \bar{m}(i,j) \) to \( \bar{m}(i-1,j) \).
\end{itemize}
\end{Definition}

\begin{Proposition}
  \label{prp:reduction-mod-m}
The map \( \gamma\map{\II}{\Part} \) along with the choice of \( \alpha_{ij} \) and \( \beta_{ij} \) define a dual equivalence growth diagram on \( \II \).
\end{Proposition}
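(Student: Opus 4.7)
The strategy is to verify the three conditions of Definition~\ref{def:dual-equiv-growth} directly from the construction of the reduction modulo $m$. Condition~(i) is essentially a restatement: by definition $\gamma_{ij} = \tilde\gamma(\hat m(i),\hat m(j))$, and since $\tilde\gamma$ is a growth diagram on $\tilde\II$, this value is a partition of $\hat m(j)-\hat m(i) = m_s(i,j)$.

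For condition~(ii), suppose $a_{ij}$ is internal to $\II$, so that $(i,j)$ and $(i-1,j)$ both lie in $\II$ and consequently $\bar m(i,j),\bar m(i-1,j) \in \tilde\II$. I would use the adaptedness hypothesis to argue that the entire horizontal lattice segment joining $\bar m(i,j)$ and $\bar m(i-1,j)$ lies in $\tilde\II$, and then iterate condition~(ii) of $\tilde\gamma$ along that segment to obtain the chain of containments culminating in $\gamma_{ij} \subset \gamma_{(i-1)j}$. Vertical edges $b_{ij}$ are handled identically.

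Condition~(iii) is the substantive part. If all four edges $a_{ij}, b_{(i-1)j}, b_{ij}, a_{i(j+1)}$ are internal to $\II$, then the four points $\bar m(i,j), \bar m(i-1,j), \bar m(i,j+1), \bar m(i-1,j+1)$ all lie in $\tilde\II$, and adaptedness forces the entire rectangular region they bound to lie in $\tilde\II$. The restriction of $\tilde\gamma$ to this rectangle is itself a rectangular growth diagram. Unwinding Definition~\ref{def:reduction-mod-m}, the standard tableaux read off along the western and northern edges of this rectangle represent the dual equivalence classes $\beta_{ij}$ and $\alpha_{i(j+1)}$ respectively, while the tableaux along the southern and eastern edges represent $\alpha_{ij}$ and $\beta_{(i-1)j}$. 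Comparing with Definition~\ref{def:shuffling-classes} immediately identifies $(\alpha_{ij},\beta_{(i-1)j})$ as the shuffle of $(\beta_{ij},\alpha_{i(j+1)})$, which is precisely condition~(iii).

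The main conceptual point is that Definitions~\ref{def:shuffling-classes} and~\ref{def:reduction-mod-m} have been set up precisely so that any rectangular piece of a growth diagram exhibits its west/north edges as shuffling into its south/east edges; once this alignment is made explicit, the verification is essentially bookkeeping. I expect the only real technical care to be needed in the systematic use of adaptedness to certify that the paths and rectangles appearing in the argument actually lie inside $\tilde\II$, rather than in any deep combinatorial input.
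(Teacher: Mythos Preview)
Your proposal is correct and follows essentially the same route as the paper: verify each condition of Definition~\ref{def:dual-equiv-growth} by unwinding the construction, with adaptedness supplying the rectangular region needed for condition~(iii). One small point: for condition~(ii) you invoke adaptedness to obtain the horizontal segment from $\bar m(i,j)$ to $\bar m(i-1,j)$, but adaptedness as stated only concerns full rectangles; the paper instead simply notes that this path exists in $\tilde\II$ because $\alpha_{ij}$ is \emph{defined} via it in Definition~\ref{def:reduction-mod-m}, so its presence is part of the data rather than something to be derived.
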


\begin{proof}
We must check the conditions in Definition~\ref{def:dual-equiv-growth}. For condition \ref{item:j-i=mij_partition} note that \( \gamma_{ij} = \tilde{\gamma}_{\hat{m}(i),\hat{m}(j)} \) so \( \abs{\gamma_{ij}} = \hat{m}(j) - \hat{m}(i) \) which is \( m_s(i,j) \) by definition. We have a path in \( \tilde{\II} \) from \( \bar{m}(i,j) \) to \( \bar{m}(i-1,j) \) so
\begin{equation*}
  \gamma_{ij} = \tilde{\gamma}_{\bar{m}(i,j)} \subset \tilde{\gamma}_{\bar{m}(i-1,j)} = \gamma_{(i-1)j}.
\end{equation*}
Similarly \( \gamma_{ij} \subset \gamma_{i(j+1)} \). To see condition \ref{item:dual-equiv-jdt-condition}, note that \( \alpha_{ij}, \beta_{(i-1)j}, \beta_{ij} \) and \( \alpha_{i(j+1)} \) are defined as the dual equivalence classes coming from the four sides of a rectangular growth diagram:
\begin{center}
  \begin{tikzpicture}[scale=0.8]
    \node[black,circle,fill,inner sep=0,outer sep=2,minimum size=4pt,label={[black]225:\( \tilde{\gamma}_{\bar{m}(i,j)} \)}] (sw) at (0,0) {}; 
    \node[black,circle,fill,inner sep=0,outer sep=2,minimum size=4pt,label={[black]315:\( \tilde{\gamma}_{\bar{m}(i-1,j)} \)}] (se) at (3,0) {}; 
    \node[black,circle,fill,inner sep=0,outer sep=2,minimum size=4pt,label={[black]135:\( \tilde{\gamma}_{\bar{m}(i,j+1)} \)}] (nw) at (0,2) {}; 
    \node[black,circle,fill,inner sep=0,outer sep=2,minimum size=4pt,label={[black] 45:\( \tilde{\gamma}_{\bar{m}(i-1,j+1)} \)}] (ne) at (3,2) {}; 

    \draw[help lines] (sw) -- (se); 
    \draw[help lines] (sw) -- (nw); 
    \draw[help lines] (nw) -- (ne); 
    \draw[help lines] (se) -- (ne);

    \node[anchor=east] at (0,1) {\( \beta_{ij} \)};
    \node[anchor=north] at (1.5,0) {\( \alpha_{ij} \)};
    \node[anchor=west] at (3,1) {\( \beta_{(i-1)j} \)};
    \node[anchor=south] at (1.5,2) {\( \alpha_{i(j+1)} \)};
  \end{tikzpicture}
\end{center}
The fact that this portion of \( \tilde{\II} \) forms a rectangular growth diagram is given by the requirement that \( \tilde{\II} \) is adapted to \( m \). By definition, this means \( (\beta_{ij}, \alpha_{i(j+1)}) \) is the shuffle of \( (\alpha_{ij}, \beta_{(i-1)j}) \) as required.
\end{proof}

\section{Speyer's flat family}
\label{sec:speyers-flat-family}

In this section we describe the main geometrical tool that will be used to prove Theorem~\ref{thm:main-thm}. In the paper~\cite{Mukhin:2009et}, Mukhin, Tarasov and Varchenko describe a relationship between Bethe algebras and Schubert calculus. Speyer~\cite{Speyer:2014gg} constructs a flat family of Schubert intersections over \( \M{k}(\CC) \) which, in Section~\ref{sec:bethe-algebras} will be related to the spectrum of the Bethe algebras. We define an action of the symmetric group on the family and Speyer's explicit combinatorial description of the real points to calculate the equivariant monodromy action.

\subsection{Osculating flags}
\label{sec:osculating-flags}

In this section we recall some definitions and facts from Schubert calculus. All the Grassmannians we consider will be defined relative to some genus \( 0 \) smooth curve \( C \). To set this up, choose a very ample line bundle \( \Ll \) on \( C \) of degree \( d-1 \). We have the Veronese embedding
\begin{equation*}
  \varepsilon \map{C}{\PP H^0(C,\Ll)^*}.
\end{equation*}
A point \( p \) is sent by \( \varepsilon \) to the hyperplane of sections vanishing at \( p \). Let
\begin{equation*}
  \Gr(r,d)_C \deq \Gr(r,H^0(C,\Ll)).
\end{equation*}
We can also define the \( r^{\text{th}} \) \emph{associated curve} \( \varepsilon_r \map{C}{\Gr(r,d)_C} \) which sends a point \( p \) to the space of sections vanishing to order at least \( d-r \) at \( p \). That is
\begin{equation*}
  \varepsilon_{r}(p) \deq H^0(C,\Ii_p^{d-r} \ox \Ll) \subset H^0(C,\Ll),
\end{equation*}
Here \( \Ii_p \) is the ideal sheaf of the point \( p \). With this notation \( \varepsilon = \varepsilon_{d-1} \). 

\begin{Definition}
  \label{def:osculating-flag}
The flag \( \Ff_\bullet(p) \) defined by \( \Ff_i(p) = \varepsilon_i(p) \) is called the \emph{osculating flag} at \( p \).
\end{Definition}

\begin{Example}
  \label{exm:mtv-osculating-flags}
We can make this concrete by considering the case \( C = \PP^1 = \PP\CC^2 \). Fix the standard homogeneous coordinates \( [x:y] \) on \( \PP^1 \). Choose the line bundle \( \Oo_{\PP^1}(d-1) \). Then \( H^0(\PP^1,\Oo(d-1)) = \CC[x,y]_{d-1} \), the homogeneous polynomials of degree \( d-1 \). If we work in the affine patch where \( y \neq 0 \) then we identify identify this with \( \CC_d[u] \), the space of polynomials of degree \emph{strictly less than} \( d \) (\( u \) is the coordinate on this patch). The Grassmannian \( \Gr(r,d)_{\PP^1} \) is then the set of \( r \)-dimensional subspaces of \( \CC_d[u] \). The map \( \varepsilon_r \) sends the point \( [b:1] \in \PP^1 \) to the subspace \( (u-b)^{d-r} \CC_{r}[u] \) and the osculating flag \( \Ff_\bullet(b) \) is
\begin{equation*}
  (x-b)^{d-1} \CC_1[x] \subset (x-b)^{d-2} \CC_2[x] \subset \ldots \subset (x-b) \CC_{(d-1)}[x] \subset \CC_d[x].
\end{equation*}
The flag \( \Ff_\bullet(\infty) \) is
\begin{equation*}
  \CC_0[x] \subset \CC_1[x] \subset \ldots \subset \CC_{(d-1)}[x] \subset \CC_d[x].
\end{equation*}
\end{Example}

\begin{Remark}
  \label{rem:drop-notaton-PP1}
When we are in the situation of Example~\ref{exm:mtv-osculating-flags} we will drop the subscript \( \PP^1 \). and write \( \Gr(r,d) \) instead of \( \Gr(r,d)_{\PP^1} \).
\end{Remark}

Suppose we have pairs \( (C,\Ll) \) and \( (D,\Kk) \) of curves and very ample line bundles as well as an isomorphism \( \phi\map{C}{D} \) such that \( \phi_* \Ll \cong \Kk \). We would like some relation between the Grassmannian and osculating flags on each curve. It is important to note it is not possible to choose a canonical isomorphism between \( \phi_*\Ll \) and \( \Kk \), however we have the following fact.

\begin{Lemma}
  \label{lem:iso-inv-sheaves}
Let \( \Ee \) be an invertible \( \Oo_X \)-module for a projective \( \CC \)-scheme \( X \). Then \( \End(\Ee) \cong \CC \).
\end{Lemma}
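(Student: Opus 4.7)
The plan is straightforward sheaf-theoretic bookkeeping followed by an appeal to a standard fact about global sections on projective schemes.

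First I would identify $\End(\Ee)$ with global sections of the structure sheaf. Since $\Ee$ is an invertible $\Oo_X$-module, the sheaf $\SheafHom_{\Oo_X}(\Ee,\Ee)$ is canonically isomorphic to $\Ee^\vee \otimes_{\Oo_X} \Ee$, which in turn is canonically isomorphic to $\Oo_X$ via the evaluation map $f \otimes s \mapsto f(s)$ (this is an isomorphism precisely because $\Ee$ is locally free of rank one, so one can check it on a trivialising cover and the local isomorphisms glue because the transition functions cancel). Taking global sections gives
\begin{equation*}
\End(\Ee) \;=\; H^0\!\bigl(X,\SheafHom_{\Oo_X}(\Ee,\Ee)\bigr) \;\cong\; H^0(X,\Oo_X).
\end{equation*}

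Next I would invoke the standard fact that for a projective (hence proper) connected reduced $\CC$-scheme $X$ one has $H^0(X,\Oo_X) = \CC$. This follows because $H^0(X,\Oo_X)$ is a finite-dimensional $\CC$-algebra (by properness and coherence of $\Oo_X$) which is an integral domain (by connectedness and reducedness), and any finite-dimensional $\CC$-algebra that is a domain equals $\CC$ since $\CC$ is algebraically closed.

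In the applications of this lemma the scheme $X$ will be a smooth genus $0$ curve, so connectedness and reducedness are automatic; I would simply remark that the hypotheses in the lemma should be read with this in mind (or equivalently that ``projective'' is implicitly ``projective integral''). The only potential obstacle is this mild point about what is included in the word \emph{projective}; once that is pinned down the proof is essentially two lines.
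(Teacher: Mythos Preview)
Your proof is correct and follows essentially the same approach as the paper: both arguments reduce $\End(\Ee)$ to $\End(\Oo_X)\cong H^0(X,\Oo_X)\cong\CC$ using the invertibility of $\Ee$ (you via the sheaf identification $\SheafHom(\Ee,\Ee)\cong\Ee^\vee\otimes\Ee\cong\Oo_X$, the paper via the hom--tensor adjunction $\Hom(\Oo_X,\Oo_X)\cong\Hom(\Ee\otimes\Ee^*,\Oo_X)\cong\Hom(\Ee,\Ee)$). Your observation that connectedness and reducedness are being tacitly assumed is well taken; the paper simply asserts $\End(\Oo_X)\cong\CC$ without comment, relying on the fact that in its application $X$ is a smooth genus~$0$ curve.
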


\begin{proof}
Note that \( \End(\Oo_X) \cong \CC \). The lemma follows from the fact \( \Oo_X \cong \Ee \ox \Ee^* \) and the hom-tensor adjunction formula:
\begin{align*}
  \CC \cong \Hom(\Oo_X,\Oo_X) \cong& \Hom(\Ee \ox \Ee^*, \Oo_X) \\
                              \cong& \Hom(\Ee, \SheafHom(\Ee^*,\Oo_X)) \\
                              \cong& \Hom(\Ee,\Ee). \qedhere
\end{align*}
\end{proof}

This means the isomorphism \( \phi_*\Ll \cong \Kk \) is unique up to scalar multiple. By noting \( H^0(C,-) = H^0(D,\phi_* -) \), we have a canonical induced isomorphism
\begin{equation*}
  \phi_1 : \PP H^0(C,\Ll) \longrightarrow \PP H^0(D,\Kk),
\end{equation*}
as well as canonical isomorphisms
\begin{equation*}
  \phi_r : \Gr(r,d)_C \longrightarrow \Gr(r,d)_D
\end{equation*}
for any \( r \).

\begin{Lemma}
  \label{lem:commutes-with-embedding}
The isomorphism \( \phi_r \) preserves the associated curves, more precisely, \( \phi_r \circ \varepsilon_r = \varepsilon_r \circ \phi \).
In particular \( \Ff_i(\phi(p)) = \phi_r(\Ff_i(p)) \).
\end{Lemma}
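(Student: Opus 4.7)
The plan is to unpack the definitions and trace through the functoriality of pushforward. Recall that $\varepsilon_r(p) = H^0(C, \Ii_p^{d-r} \otimes \Ll)$, viewed as an $r$-dimensional subspace of $H^0(C,\Ll)$, and $\phi_r$ was induced from the canonical identification $H^0(C,\Ll) = H^0(D,\phi_*\Ll) \cong H^0(D,\Kk)$, where the last isomorphism is the one coming from $\phi_*\Ll \cong \Kk$ (unique up to scalar by Lemma~\ref{lem:iso-inv-sheaves}). So the goal is to show that this identification carries the subspace $\varepsilon_r(p) \subset H^0(C,\Ll)$ to $\varepsilon_r(\phi(p)) \subset H^0(D,\Kk)$.

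The first key step is to observe that because $\phi$ is an isomorphism of schemes, it takes the ideal sheaf of $p$ to the ideal sheaf of $\phi(p)$; more precisely, there is a canonical identification $\phi_*\Ii_p = \Ii_{\phi(p)}$, and therefore $\phi_*\Ii_p^{d-r} = \Ii_{\phi(p)}^{d-r}$. Combined with the projection formula (or directly, since $\phi$ is an isomorphism), any chosen identification $\phi_*\Ll \cong \Kk$ induces an identification
\begin{equation*}
  \phi_*\bigl(\Ii_p^{d-r} \otimes \Ll\bigr) \;\cong\; \Ii_{\phi(p)}^{d-r} \otimes \Kk,
\end{equation*}
compatible with the inclusions into $\phi_*\Ll \cong \Kk$.

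The second step is to take global sections. The inclusion of sheaves $\Ii_p^{d-r} \otimes \Ll \hookrightarrow \Ll$ gives, via $\phi$, the inclusion of subspaces
\begin{equation*}
  H^0\bigl(C, \Ii_p^{d-r}\otimes\Ll\bigr) \;\hookrightarrow\; H^0(C,\Ll) \;\cong\; H^0(D,\Kk),
\end{equation*}
whose image is precisely $H^0(D, \Ii_{\phi(p)}^{d-r}\otimes\Kk) = \varepsilon_r(\phi(p))$. This gives $\phi_r(\varepsilon_r(p)) = \varepsilon_r(\phi(p))$, i.e.\ $\phi_r \circ \varepsilon_r = \varepsilon_r \circ \phi$. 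Applying this with $r$ replaced by $i$ yields $\Ff_i(\phi(p)) = \phi_r(\Ff_i(p))$ for each $i$, since the induced map on flags is the restriction of $\phi_r$ on each $\Gr(i,d)$.

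The only subtlety to mention is the scalar ambiguity in the choice of $\phi_*\Ll \cong \Kk$: different choices rescale the identification $H^0(C,\Ll) \cong H^0(D,\Kk)$ by a nonzero scalar, but this does not affect any subspace, so the induced map on Grassmannians (and hence the statement about osculating flags) is genuinely canonical. There is no serious obstacle here; the entire argument is a diagram chase, and the main thing to be careful about is distinguishing the canonical data (the ideal sheaves, and the induced maps on Grassmannians) from the non-canonical data (the scalar in the sheaf isomorphism).
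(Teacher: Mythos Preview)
Your proof is correct and follows essentially the same approach as the paper: choose an isomorphism $\psi\colon\phi_*\Ll\to\Kk$, observe that it carries the submodule $\phi_*(\Ii_p^{d-r}\otimes\Ll)$ to $\Ii_{\phi(p)}^{d-r}\otimes\Kk$, and take global sections. Your version simply spells out more of the details (the identification $\phi_*\Ii_p=\Ii_{\phi(p)}$, the projection formula, and the scalar ambiguity), which the paper compresses into the single phrase ``$\psi$ is a module homomorphism.''
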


\begin{proof}
Choose an isomorphism \( \psi\map{\phi_*\Ll}{\Kk} \). We thus obtain an isomorphism \( H^0(C,\Ll) \longrightarrow H^0(D,\Kk) \)
which we also denote by \( \psi \). Let \( p \in C \) and let \( q = \phi(p) \). We need to show that the image of \( H^0(C,\Ii_p^{d-r}\ox\Ll) \) under \( \psi \) is \( H^0(D,\Ii_q^{d-r} \ox \Kk) \). This follows since \( \psi \) is a module homomorphism and thus sends \( \phi_*(\Ii_p^{d-r}\ox\Ll) \), considered as a submodule of \( \phi_*(\Ll) \), to \( \Ii_q^{d-r}\ox\Kk \).
\end{proof}

\subsection{Schubert intersections}
\label{sec:schub-intersects}

For a partition \( \lambda \), with at most \( r \) rows and \( d-r \) columns and a point \( p \in C \) we will denote the Schubert variety corresponding to the osculating flag \( \Ff_\bullet(p) \) by \( \Omega(\lambda;p)_C \). Let \( \lambda^\cc \) be the partition \emph{complementary} to \( \lambda \) for \( \Gr(r,d)_C \). That is, \( \lambda^\cc \) is obtained from \( \Lambda_{r,d} \bs \lambda \) by rotating \( 180 \) degrees.

\begin{Lemma}
  \label{lem:isomorphism-of-schuberts}
Let \( (C,\Ll), (D,\Kk) \) and \( \phi_r \) be as in Section~\ref{sec:osculating-flags}. The image of \( \Omega(\lambda;p)_C \) under \( \phi_r \) is \( \Omega(\lambda;\phi(p))_D \).
\end{Lemma}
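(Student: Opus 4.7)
The plan is to unravel the definition of $\Omega(\lambda;p)_C$ as a set of dimension conditions on intersections with the osculating flag $\Ff_\bullet(p)$, and then appeal directly to Lemma~\ref{lem:commutes-with-embedding} to transport this flag to $\Ff_\bullet(\phi(p))$.

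First I would recall that, for a partition $\lambda$ with at most $r$ rows and $d-r$ columns, the Schubert variety attached to a flag $\Ff_\bullet$ in $\CC^d$ is cut out by the incidence conditions
\begin{equation*}
\Omega(\lambda;p)_C = \setc{V \in \Gr(r,d)_C}{\dim (V \cap \Ff_{d-r+i-\lambda_i}(p)) \ge i \text{ for } i = 1,\ldots,r},
\end{equation*}
so that $\Omega(\lambda;p)_C$ depends only on the osculating flag $\Ff_\bullet(p)$.

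Next, I would invoke the fact that the map $\phi_r\,{:}\,\Gr(r,d)_C \to \Gr(r,d)_D$ is induced from a linear isomorphism $\psi\,{:}\,H^0(C,\Ll) \to H^0(D,\Kk)$ (unique up to scalar by Lemma~\ref{lem:iso-inv-sheaves}). In particular, for any subspaces $V \subset H^0(C,\Ll)$ and $W \subset H^0(C,\Ll)$, we have $\dim(\psi(V) \cap \psi(W)) = \dim(V \cap W)$, so $\phi_r$ preserves all incidence conditions between subspaces.

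Putting these together, Lemma~\ref{lem:commutes-with-embedding} gives $\phi_r(\Ff_i(p)) = \Ff_i(\phi(p))$ for every $i$; hence $V \in \Omega(\lambda;p)_C$ if and only if $\phi_r(V) \in \Omega(\lambda;\phi(p))_D$, and the equality of images follows. There is essentially no obstacle here beyond bookkeeping: the only nontrivial input is Lemma~\ref{lem:commutes-with-embedding}, which has already been established, and the observation that the Schubert locus is characterised purely by the flag of osculating subspaces, a property manifestly preserved by any linear isomorphism sending one flag to the other.
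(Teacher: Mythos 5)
Your proof is correct and takes essentially the same route as the paper: both reduce to the facts that $\phi_r$ is induced by a linear isomorphism $\psi$ (hence preserves the dimension conditions defining the Schubert variety) and that $\psi$ carries $\Ff_\bullet(p)$ to $\Ff_\bullet(\phi(p))$ by Lemma~\ref{lem:commutes-with-embedding}. You merely spell out the Schubert incidence conditions explicitly where the paper leaves them implicit.
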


\begin{proof}
Choose an isomorphism \( \psi\map{\phi_*\Ll}{\Kk} \). If \( \Ff \) is a flag in \( H^0(C,\Ll) \) then for any subspace \( V \subset H^0(C,\Ll) \)
\begin{equation*}
  \dim V \cap \Ff_i = \dim \psi(V) \cap \psi(\Ff_i).
\end{equation*}
Thus \( \psi_r(\Omega(\lambda,\Ff)_C) = \Omega(\lambda,\psi(\Ff)) \). By Lemma~\ref{lem:commutes-with-embedding} \( \psi(\Ff(p)) = \Ff(\phi(p)) \).
\end{proof}

\subsection{Speyer's compactification}
\label{sec:speyer-compactification}

In this section, we fix the following data
\begin{itemize}
\item positive integers \( d,r \) and \( k \) such that \( d \le r \), and
\item a sequence of partitions \( \lamb = (\lambda_{1},\lambda_{2},\ldots,\lambda_{k}) \) such that each \( \lambda_i \) has at most \( r \) rows and \( d-r \) columns and  and \( k \le r(d-r) \).
\end{itemize}
Often we will reduce to the case when \( \lamb = (\sq^k) \), i.e. \( \lambda_i = \sq \) for all \( i \in [k] \). We will refer to this as the \emph{fundamental case}.
We are interested in Schubert intersections relative to the conditions \( \lamb \). For \( z = (z_1,z_2,\ldots,z_n) \in (\PP^1)^k \) distinct define
\begin{equation*}
  \Omega(\lamb;z) \deq \Omega(\lambda_{1};z_1) \cap \Omega(\lambda_{2};z_2) \cap \ldots \cap \Omega(\lambda_{k};z_k).
\end{equation*}
After accounting for an action of \( \mathrm{PGL}_2 \) we consider the family \( \rho\map{\Omega(\lamb)}{\oM{k}} \) whose fibre over the curve with marked points \( z \) is \( \Omega(\lamb;z) \). We recall the construction of Speyer's flat families \( \Gg(r,d), \) and \( \Ss(\lambda_{\bullet}) \) from~\cite{Speyer:2014gg} which extend the families \( \Gr(r,d)\times \oM{k} \) and \( \Omega(\lamb) \).

\subsubsection{The construction}
\label{sec:construction}

If \( A \) is a three element subset of \( [k] \), fix a curve \( C_T \) isomorphic to \( \PP^1 \) with three points marked by the elements of \( A \). Since \( A \) consists of exactly three elements, the choice of \( C_A \) is unique up to projective equivalence. We write \( \Gr(r,d)_A \) for \( \Gr(r,d)_{C_A} \). For a curve \( C \in \oM{k} \) with marked points \( (z_1,\ldots,z_k) \) let \( \phi_A(C) \map{\PP^1}{C_A} \) be the unique isomorphism that for each \( a \in A \) sends \( z_a \in \PP^1 \) to the point on \( C_A \) marked by \( a \). In this way we obtain a morphism
\begin{equation*}
  \phi_A\map{\oM{k}\times \PP^1}{\oM{k} \times C_A}.
\end{equation*}
Applying the Grassmannian construction to the family of curves we obtain a morphism
\begin{equation*}
  \phi_A\map{\oM{k}\times \Gr(r,d)_{\PP^1}}{\oM{k} \times \Gr(r,d)_A}.
\end{equation*}
Using these morphisms we construct an embedding 
\begin{equation*}
  \begin{tikzcd}[row sep = 0pt]
      \oM{k} \times \Gr(r,d) \arrow[hook]{r} & \M{k} \times \prod_A \Gr(r,d)_A \\
      (C,X) \arrow[mapsto]{r} & (C,\phi_A(C,X)).
  \end{tikzcd}
\end{equation*}
Identify \( \oM{k} \times \Gr(r,d) \) with its image in \( \M{k} \times \prod_A \Gr(r,d)_A \).

\begin{Definition}[{\cite{Speyer:2014gg}}]
  \label{def:Speyer-families}
The family \( \Gg(r,d) \) is the closure of \(\oM{k} \times \Gr(r,d) \) in \( \M{k} \times \prod_A \Gr(r,d)_A \). Also define the subvariety \( \Ss(\lamb) \) as \( \Gg(r,d) \cap \bigcap_{a \in A}\Omega(\lambda_{a},a)_A \).
\end{Definition}

\begin{Theorem}[{\cite[Theorem~1.1]{Speyer:2014gg}}]
  \label{thm:props-of-G-and-S}
The family \( \Gg(r,d) \) and its subfamily \( \Ss(\lamb) \) have the following properties:
\begin{enumerate}[(i)]
\item \( \Gg(r,d) \) and \( \Ss(\lamb) \) are Cohen-Macaulay and flat over \( \M{k} \).
\item \( \Gg(r,d) \) is isomorphic to \( \Gr(r,d) \times \oM{k} \) over \( \oM{k} \).
\item \( \Ss(\lamb) \) is isomorphic to \( \Omega(\lambda_{\bullet}) \) over \( \oM{k} \).
\item If a representative of \( C \in \oM{k} \) has marked points \( z_1, z_2, \ldots z_k \in \PP^1 \) then the fibre of \( \Ss(\lamb) \) over \( C \) is isomorphic to \( \bigcap \Omega(\lambda_i,z_i) \)
\end{enumerate}
\end{Theorem}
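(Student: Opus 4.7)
The plan splits naturally into the statements over the open locus $\oM{k}$ and the statements that extend across the boundary.

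First I would dispatch (ii), (iii), (iv). By construction the map $\oM{k} \times \Gr(r,d) \hookrightarrow \M{k} \times \prod_A \Gr(r,d)_A$ sending $(C,X)$ to $(C,\phi_A(C,X))_A$ is a closed immersion, since each $\phi_A$ is an isomorphism when all three points of $A$ lie on the same smooth irreducible component. Taking the scheme-theoretic closure does not affect the image over $\oM{k}$, so $\Gg(r,d)|_{\oM{k}} \cong \oM{k} \times \Gr(r,d)$, which gives (ii). For (iii), Lemma~\ref{lem:isomorphism-of-schuberts} tells us that $\phi_{r,A}$ identifies $\Omega(\lambda_a;z_a)$ (computed inside $\Gr(r,d)_{\PP^1}$) with $\Omega(\lambda_a;a)_A$ for any three-element $A \ni a$. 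Hence $\Ss(\lamb)|_{\oM{k}}$ is cut out by exactly the $k$ Schubert conditions defining $\Omega(\lamb)$, and (iv) is the fibrewise version of (iii).

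The substantive content is (i): flatness and the Cohen--Macaulay property of $\Gg(r,d)$ and $\Ss(\lamb)$ over $\M{k}$. The strategy is \emph{miracle flatness}: since $\M{k}$ is smooth and irreducible, it suffices to show that the total spaces are Cohen--Macaulay and that all fibres have the same dimension. The dimension of the generic fibre is known --- $\dim \Gr(r,d) = r(d-r)$ for $\Gg(r,d)$ and, in the fundamental case $\lamb = (\sq^k)$, $\dim \Omega(\sq^k;z) = r(d-r)-k$ --- so the task reduces to showing boundary fibres have the correct dimension and that the closures do not acquire embedded components.

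To control the boundary fibres, I would stratify $\M{k}$ by topological type of the dual tree of the stable curve, and inductively describe the fibre of $\Gg(r,d)$ over a nodal curve $C = C_1 \cup_\nu C_2$ in terms of fibres over the two components. Projecting to the subset of factors $\prod_{A \subset C_i} \Gr(r,d)_A$ for which $A$ lies entirely in one component, one obtains maps to $\Gg_i$ on each side; the fibre over $C$ is the pullback obtained by matching the two limits across the node, and should be expressible as a union of products of Schubert-type varieties indexed by the ways a partition $\nu$ can ``cross the node''. The key combinatorial input is that, over a generic curve, the intersection at the node is transverse of the correct dimension (a Littlewood--Richardson style count), which guarantees that the fibre has dimension $r(d-r)$ (or $r(d-r)-\sum|\lambda_a|$ for $\Ss(\lamb)$) and that it is a reduced union of Cohen--Macaulay irreducible pieces.

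The main obstacle is precisely this boundary description: one must verify that the scheme-theoretic closure agrees with the naive set-theoretic limit and that no ``spurious'' components or embedded primes appear along the locus where different triples $A$ cease to be defined simultaneously. I would attack this by a deformation-to-the-normal-cone argument at each boundary stratum, reducing to the fundamental case $\lamb=(\sq^k)$ (since a general $\lamb$ is obtained by specialising marked points to coincide) and then using induction on $k$, with the base cases $k=3,4$ corresponding to a single Grassmannian and a classical $\PP^1$-family of Schubert intersections, respectively.
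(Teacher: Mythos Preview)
The paper does not prove this theorem at all: it is quoted verbatim from \cite[Theorem~1.1]{Speyer:2014gg} and used as a black box. So there is no ``paper's own proof'' to compare against; your proposal is really a sketch of Speyer's argument, not of anything in the present paper.

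That said, your outline is broadly faithful to Speyer's strategy. Parts (ii)--(iv) are indeed immediate from the construction and Lemma~\ref{lem:isomorphism-of-schuberts}, and for (i) Speyer does use miracle flatness: $\M{k}$ is smooth, so it suffices to show the total space is Cohen--Macaulay with equidimensional fibres. The fibre description you anticipate --- a union over node labellings $\nu$ of products of Schubert intersections on the components --- is exactly Theorem~\ref{thm:fibre-theorem} here (Speyer's proof of his Theorem~1.2), and the induction on the dual tree is the right framework.

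Where your sketch diverges from Speyer, and where the genuine difficulty lies, is in ruling out embedded components and showing the scheme-theoretic closure agrees with the set-theoretic one. You propose a deformation-to-the-normal-cone argument; Speyer instead works in the Pl\"ucker embedding and uses a multidegree computation. He shows that the multidegree of $\Gg(r,d)$ in $\M{k}\times\prod_A\Gr(r,d)_A$ is determined on the open locus, and then checks that the set-theoretic description of the boundary fibre already saturates this degree, so there is no room for anything extra. This is more concrete than a normal-cone argument and avoids having to analyse each boundary stratum separately. Your reduction to the fundamental case $\lamb=(\sq^k)$ is not how Speyer proceeds either: he treats $\Ss(\lamb)$ directly as an intersection inside $\Gg(r,d)$ and invokes the Cohen--Macaulayness of Schubert varieties together with a codimension count.
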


\begin{Theorem}[{\cite[Theorem~1.4]{Speyer:2014gg}}]
\label{thm:real-points-labelling}
If \( \abs{\lamb} = \sum \abs{\lambda_i} = r(d-r) \), the fibre of \( \Ss(\lambda_{\bullet}) \) over \( C \in \M{k}(\RR) \) is a reduced union of real points. 
\end{Theorem}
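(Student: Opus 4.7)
The plan is to combine the reality theorem of Mukhin--Tarasov--Varchenko with the flatness and boundary structure of Speyer's family $\Ss(\lamb)$, arguing by induction on the number of irreducible components of $C$.

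For the base case $C \in \oM{k}(\RR)$, Theorem~\ref{thm:props-of-G-and-S}(iv) identifies the fiber of $\Ss(\lamb)$ over $C$ with the Schubert intersection $\bigcap_i \Omega(\lambda_i; z_i)$ for some real points $z_1, \ldots, z_k \in \PP^1(\RR)$. The assumption $\abs{\lamb} = r(d-r)$ ensures this intersection is zero-dimensional of expected dimension, and its generic degree $N$ equals the number of standard Young tableaux of shape $\Lambda_{r,d}$ with content $(\lambda_1, \ldots, \lambda_k)$ by the Littlewood--Richardson rule. The MTV reality theorem then asserts this intersection is a reduced union of $N$ real points.

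For the inductive step, consider a nodal curve $C \in \M{k}(\RR) \setminus \oM{k}(\RR)$, viewed as a tree of $\PP^1$'s joined at nodes. Flatness of $\Ss(\lamb)$ over $\M{k}$ (Theorem~\ref{thm:props-of-G-and-S}(i)) guarantees the total fiber length is again $N$. The key structural fact coming out of Speyer's construction is that this fiber decomposes as a fibered product of smaller Schubert intersections, one for each irreducible component of $C$, where each node contributes an additional ``boundary partition'' shared between the two adjacent components. These boundary partitions are constrained by a growth-diagram/dual-equivalence compatibility condition in the sense of Section~\ref{sec:growth-diagrams}, and the Littlewood--Richardson rule guarantees that the sum of the degrees of these smaller intersections, over all compatible boundary-partition assignments, equals $N$. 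By induction applied to each irreducible component, which carries strictly fewer marked points and a smaller partition collection still satisfying the rectangular size condition, each smaller intersection contributes a reduced union of real points, and their fibered product yields $N$ reduced real points total, accounting for the full length.

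The main obstacle is verifying that Speyer's compactification at a nodal curve \emph{geometrically} realizes the combinatorial product structure described above: one must show the scheme-theoretic fiber over $C$ splits into components indexed by compatible boundary-partition choices, and that on each such component the restriction coincides with a genuine Schubert intersection on each $\PP^1$ piece with the osculating flags inherited from the three-pointed models $C_A$. This uses the delicate control over degeneration provided by the embedding $\oM{k} \times \Gr(r,d) \hookrightarrow \M{k} \times \prod_A \Gr(r,d)_A$ used to define $\Gg(r,d)$, and matching the geometric decomposition to the growth-diagram enumeration is the principal technical content. Once this compatibility is established, reality and reducedness propagate inductively from the interior to every real boundary stratum.
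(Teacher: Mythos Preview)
The paper does not give its own proof of this statement: it is quoted verbatim as \cite[Theorem~1.4]{Speyer:2014gg} and used as a black box. So there is no ``paper's proof'' to compare against directly.

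That said, your sketch is essentially the strategy Speyer uses, and the paper even records the key ingredient you flag as the ``main obstacle'': Theorem~\ref{thm:fibre-theorem} is precisely the statement that the scheme-theoretic fibre of \( \Ss(\lamb) \) over a nodal curve \( C \) decomposes as a union, indexed by node labellings \( \nu \in \ttN_C \), of products of Schubert intersections on the irreducible components \( C_i \). This is exactly the ``boundary-partition'' decomposition you describe (note it is a disjoint union of ordinary products, not a fibred product). Once you have that, the MTV reality theorem applied to each factor, together with the Littlewood--Richardson count matching the flat degree, gives reducedness and reality as you outline. So your plan is correct, and the technical core you were worried about is already packaged in the paper as Theorem~\ref{thm:fibre-theorem}.
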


\subsubsection{The fibre}
\label{sec:fibre}

We will also want an explicit description of the fibres of \( \Ss(\lamb) \) so let us recall this from~\cite{Speyer:2014gg}. Fix \( C \in \M{k} \), a not necessarily irreducible curve and denote its irreducible components \( C_1, C_2, \ldots C_l \). Fix an irreducible component \( C_i \) and let \( A \subseteq [k] \) be a three element subset. If \( d_1,\ldots,d_e \) are the nodes lying on \( C_i \) we say that \( v(A) = C_i \) if the points marked by \( A \) lie on three separate connected components of \( C \bs \set{d_1,\ldots,d_e} \).

Define the \emph{projection of} \( a \in [n] \) onto \( C_i \): if \( a \) marks a point on \( C_i \) then then projection is this point, otherwise there is a unique node \( d \in C_i \) via which \( a \) is path connected to \( C_i \), let \( d \) be the projection. If \( v(A) = C_i \) then the projection of \( A \) onto \( C_i \) produces three distinct points on \( C_i \).

If \( v(A) = C_i \) define the isomorphism \( \phi_{i,A}\map{C_i}{C_A} \) given sending the projection onto \( C_i \) of \( a \in A \) to the point marked by \( a \) in \( C_A \). This morphism is uniquely determined since \( v(A)=C_i \). By considering the corresponding isomorphisms \( \Gr(r,d)_{C_i} \longrightarrow \Gr(r,d)_A \) we obtain an embedding
\begin{equation*}
  \begin{tikzcd}
    \Gr(r,d)_{C_i} \arrow[hook]{r} & \prod_{v(A)=C_i} \Gr(r,d)_A.
  \end{tikzcd}
\end{equation*}
We will identify \( \Gr(r,d)_{C_i} \) with its image. Speyer shows  the projection from \( \Gg(r,d) \) into \( \prod_{v(A)=C_i} \Gr(r,d)_A \) lands inside \( \Gr(r,d)_{C_i} \). In this way we can think of the fibre \( \Gg(r,d)(C) \) as a subvariety of \( \prod_{i} \Gr(r,d)_{C_i} \).

\begin{Definition}
  \label{def:node-labelling}
A \emph{node labelling} for \( C \) is a function \( \nu \)  which assigns to every pair \( (C_i,d) \), of an irreducible component and node \( d \in C_j \), a partition \( \nu(C_j,x) \) such that if \( d \in C_i \cap C_j \) then \( {\nu(C_i,d)}^\cc = \nu(C_j,d) \). Denote the set of node labellings by \( \mathtt{N}_C \)
\end{Definition}

\begin{Theorem}[{\cite[Section~3, proof of Theorem~1.2]{Speyer:2014gg}}]
  \label{thm:fibre-theorem}
Let \( C \in \M{k} \) be a stable curve with irreducible components \( C_1, C_2, \ldots, C_l \). Let \( D_i \) be the set of nodes on \( C_i \) and \( P_i \) the set of marked points.
The fibres of \( \Gg(r,d) \) and \( \Ss(\lamb) \) over \( C  \) are
\begin{align}
  \Gg(r,d)(C) &= \bigcup_{\nu \in \ttN_C} \prod_{i} \bigcap_{d \in D_i} \Omega(\nu(C_i,d),d)_{C_i}, \label{eq:fibre-of-G} \\
  \Ss(\lambda_{\bullet})(C) &= \bigcup_{\nu \in \ttN_C} \prod_{i} \left( \bigcap_{d \in D_i} \Omega(\nu(C_i,d),d)_{C_i}  \cap \bigcap_{p \in P_i} \Omega(\lambda_{p},p)_{C_i} \right). \label{eq:fibre-of-S}
\end{align}
\end{Theorem}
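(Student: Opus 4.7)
My plan is to first determine the fibre of $\Gg(r,d)$ over $C$ and then obtain the fibre of $\Ss(\lamb)$ by intersecting with the Schubert conditions at the marked points. By Definition~\ref{def:Speyer-families}, $\Gg(r,d)$ is the closure of $\oM{k}\times\Gr(r,d)$ inside $\M{k}\times\prod_A \Gr(r,d)_A$, so a point of the fibre is a collection $(X_A)_A$ with $X_A\in\Gr(r,d)_A$ subject to the compatibilities imposed by the closure. The remarks preceding Definition~\ref{def:node-labelling} already show that for each irreducible component $C_i$ the subtuple $(X_A)_{v(A)=C_i}$ is governed by a single element $X_i\in\Gr(r,d)_{C_i}$, so the task reduces to identifying which tuples $(X_1,\ldots,X_l)\in\prod_i\Gr(r,d)_{C_i}$ arise as flat limits.

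The heart of the argument is to show that at each node $d\in C_i\cap C_j$ the Schubert positions of $X_i$ and $X_j$ with respect to the osculating flags $\Ff_\bullet(d)$ must be \emph{complementary}, so that they assemble into a node labelling $\nu$ with $\nu(C_i,d)^\cc=\nu(C_j,d)$. To establish necessity, I would pick a one-parameter smoothing $\{C_t\}$ of $C$ together with a family $X_t\in\Gr(r,d)_{C_t}$ specialising to the $X_i$, normalise at $d$, and decompose limiting sections according to their vanishing orders on the two branches. The resulting orders satisfy the complementarity relation familiar from Eisenbud--Harris limit linear series, forcing the node labelling structure and giving the containment of the fibre of $\Gg(r,d)$ in the right-hand side of~(\ref{eq:fibre-of-G}).

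The reverse inclusion is the main obstacle. Given any node labelling $\nu$ and a tuple in the corresponding product of Schubert intersections, I would reverse the limit linear series construction: build local sections with the prescribed vanishing orders at each node and glue them into a smoothing of $(X_1,\ldots,X_l)$ inside $\Gg(r,d)$. The Cohen--Macaulay and flatness properties of $\Gg(r,d)$ from Theorem~\ref{thm:props-of-G-and-S}(i) then guarantee that the fibre has the expected length, ruling out spurious components and upgrading the set-theoretic equality in~(\ref{eq:fibre-of-G}) to the scheme-theoretic one. Finally, for $\Ss(\lamb)$ I would intersect the fibre of $\Gg(r,d)$ with the Schubert conditions at the marked points; by Lemma~\ref{lem:isomorphism-of-schuberts}, each condition $\Omega(\lambda_p,p)_A$ with $v(A)=C_i$ descends to a single condition $\Omega(\lambda_p,p)_{C_i}$ for $p\in P_i$, which distributes across the product structure to produce~(\ref{eq:fibre-of-S}). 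The limit linear series analysis at the nodes is the technical crux; once that complementary partition structure is in place, the rest is bookkeeping with flatness and the functoriality of the $\phi_{i,A}$.
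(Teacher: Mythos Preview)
The paper does not prove this theorem: it is stated with a citation to \cite[Section~3, proof of Theorem~1.2]{Speyer:2014gg} and no proof environment follows. There is therefore nothing in the paper to compare your proposal against.

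That said, your sketch is a faithful outline of Speyer's actual argument. The necessity direction via vanishing orders on the two branches at a node (the Eisenbud--Harris limit linear series picture), the sufficiency via smoothing, and the use of flatness and Cohen--Macaulayness to promote a set-theoretic equality to a scheme-theoretic one are exactly the ingredients Speyer uses. Your identification of the node-complementarity as the technical crux is correct. If anything, be aware that in Speyer's paper the reverse inclusion is handled somewhat indirectly: rather than explicitly constructing a smoothing for every tuple, he shows the right-hand side has the correct length (via the Littlewood--Richardson rule / dimension count for products of Schubert cells) and invokes flatness to conclude equality. Your ``reverse the limit linear series construction'' is morally the same but would require more care to make rigorous than the length-count approach.
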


\subsection{Speyer's labelling of the fibre}
\label{sec:spey-labell-fibre}

Restrict to the case when \( \abs{\lamb} = r(d-r) \). By Theorem~\ref{thm:real-points-labelling} the family \( \Ss(\lamb)(\RR) \longrightarrow \M{k}(\RR) \) is a topological covering of degree \( c^{\Lambda}_{\lamb} \). Recall from Section~\ref{sec:tilling-Mk-associahedra} that \( \M{k}(\RR) \) is tiled by associahedra. This tiling is indexed by circular orderings of the set \( [k] \). We can lift the cellular structure to a tiling by associahedra of \( \Ss(\lamb)(\RR) \) and the aim of this section will be to explain Speyer's combinatorial description of this CW-complex structure.

For now, we will just consider the fundamental case \( \Ss(\sq^k) \). Choose a circular ordering, \( s = (s(1),s(2),\ldots,s(k)) \), and let \( \associahedron \) be an associahedron of \( \Ss(\sq^k)(\RR) \) lying above the associahedron corresponding to \( s \) in \( \M{k}(\RR) \). The associahedron has facets labelled by non-adjacent pairs \( (i,j) \) where \( i < j \). The facet \( \associahedron_{ij} \) of \( \associahedron \) lies over stable curves that generically have two components, one containing (in order) the labels \( s(i), s(i+1), \ldots, s(j-1) \) and the other containing the labels \( s(j), s(j+1), \ldots, s(i-1) \). Such a stable curve is depicted in Figure~\ref{fig:stab-curve-theta_ij}.

\begin{figure}[t]
  \centering
  \begin{tikzpicture}
    \draw[help lines,postaction={decorate,decoration={
      markings,
      mark=at position   90/360 with {\node[black,circle,fill,inner sep=0,minimum size=0pt,label={[black] 90:$\ldots$}] (dots) {};},
      mark=at position  140/360 with {\node[black,circle,fill,inner sep=0,minimum size=4pt,label={[black]150:$s(i+1)$}] (i+1) {};},
      mark=at position  200/360 with {\node[black,circle,fill,inner sep=0,minimum size=4pt,label={[black]180:$s(i)$}] (i) {};},
      mark=at position  340/360 with {\node[black,circle,fill,inner sep=0,minimum size=4pt,label={[black]350:$s(j-1)$}] (j-1) {};}
  }}]
 (0,0) circle (20pt);

\draw[help lines,postaction={decorate,decoration={
      markings,
      mark=at position   10/360 with {\node[black,circle,fill,inner sep=0,minimum size=4pt,label={[black]0:$s(j)$}] (j) {};},
      mark=at position  170/360 with {\node[black,circle,fill,inner sep=0,minimum size=4pt,label={[black]180:$s(i-1)$}] (i-1) {};},
      mark=at position  310/360 with {\node[black,circle,fill,inner sep=0,minimum size=4pt,label={[black]330:$s(j+1)$}] (j+1) {};},
      mark=at position  270/360 with {\node[black,circle,fill,inner sep=0,minimum size=0pt,label={[black]270:$\ldots$}] (dots2) {};}
  }}]
(0,-40pt) circle (20pt);

\node[anchor=south] at (0,-20pt) {\( \gamma_{ij} \)};
\node[anchor=north] at (0,-20pt) {\( \gamma_{ij}^\cc \)};
  \end{tikzpicture}
  \caption{A stable curve in $ \associahedron_{ij} $.}
  \label{fig:stab-curve-theta_ij}
\end{figure}

Fix a generic point \( C \) in \( \associahedron_{ij} \). Theorem~\ref{thm:fibre-theorem} tells us that the map \( \nu \) assigns a partition to either side of the node of the stable curve at \( C \). Let \( \gamma_{ij} \) be the partition assigned to the side of the node \emph{away from} the component labelled \( s(i), s(i+1), \ldots, s(j-1) \). Again see Figure~\ref{fig:stab-curve-theta_ij} for a depiction of this situation. In fact, \( \gamma_{ij} \) does not depend on \( x \) (see \cite[Lemma 7.1]{Speyer:2014gg}).

\subsubsection{Cylindrical growth diagrams}
\label{sec:cgds}

Let \( \mathbb{I} = \setc{(i,j) \in \ZZ^2}{0 \le j-i \le k} \). 

\begin{Definition}
\label{def:cylindrical-growth-diagram}
A growth diagram on \( \II \) is a \emph{cylindrical growth diagram} for \( (r,d) \) if it satisfies the condition that \( \gamma_{i(i+k)} = \Lambda \).
\end{Definition}

The reason for the adjective cylindrical is that these growth diagrams are periodic along the north-west diagonal. An example of (part of) a cylindrical growth diagram for \( r = 2 \) and \( d = 5 \) is given in Figure~\ref{fig:example-cgd}. As one can see from the diagram, the bottom row is repeated at the top. 

\begin{figure}[t]
  \centering
  \ytableausetup{nosmalltableaux}
  \ytableausetup{boxsize=0.4em}
  \begin{tikzpicture}[scale=0.95]
    \node (11)  at (-1,1) {\( \emptyset \)};
    \node (01)  at (0,1)  {\ydiagram{1}};
    \node (-11) at (1,1)  {\ydiagram{2}};
    \node (-21) at (2,1)  {\ydiagram{2,1}};
    \node (-31) at (3,1)  {\ydiagram{2,2}};
    \node (-41) at (4,1)  {\ydiagram{3,2}};
    \node (-51) at (5,1)  {\ydiagram{3,3}};

    \node (22)  at (-2,2) {\( \emptyset \)};
    \node (12)  at (-1,2) {\ydiagram{1}};
    \node (02)  at (0,2)  {\ydiagram{2}};
    \node (-12) at (1,2)  {\ydiagram{3}};
    \node (-22) at (2,2)  {\ydiagram{3,1}};
    \node (-32) at (3,2)  {\ydiagram{3,2}};
    \node (-42) at (4,2)  {\ydiagram{3,3}};

    \node (33)  at (-3,3) {\( \emptyset \)};
    \node (23)  at (-2,3) {\ydiagram{1}};
    \node (13)  at (-1,3) {\ydiagram{1,1}};
    \node (03)  at (0,3)  {\ydiagram{2,1}};
    \node (-13) at (1,3)  {\ydiagram{3,1}};
    \node (-23) at (2,3)  {\ydiagram{3,2}};
    \node (-33) at (3,3)  {\ydiagram{3,3}};

    \node (44)  at (-4,4) {\( \emptyset \)};
    \node (34)  at (-3,4) {\ydiagram{1}};
    \node (24)  at (-2,4) {\ydiagram{2}};
    \node (14)  at (-1,4) {\ydiagram{2,1}};
    \node (04)  at (0,4)  {\ydiagram{2,2}};
    \node (-14) at (1,4)  {\ydiagram{3,2}};
    \node (-24) at (2,4)  {\ydiagram{3,3}};

    \node (55)  at (-5,5) {\( \emptyset \)};
    \node (45)  at (-4,5) {\ydiagram{1}};
    \node (35)  at (-3,5) {\ydiagram{2}};
    \node (25)  at (-2,5) {\ydiagram{3}};
    \node (15)  at (-1,5) {\ydiagram{3,1}};
    \node (05)  at (0,5)  {\ydiagram{3,2}};
    \node (-15) at (1,5)  {\ydiagram{3,3}};

    \node (66)  at (-6,6) {\( \emptyset \)};
    \node (56)  at (-5,6) {\ydiagram{1}};
    \node (46)  at (-4,6) {\ydiagram{1,1}};
    \node (36)  at (-3,6) {\ydiagram{2,1}};
    \node (26)  at (-2,6) {\ydiagram{3,1}};
    \node (16)  at (-1,6) {\ydiagram{3,2}};
    \node (06)  at (0,6)  {\ydiagram{3,3}};

    \node (77)  at (-7,7) {\( \emptyset \)};
    \node (67)  at (-6,7) {\ydiagram{1}};
    \node (57)  at (-5,7) {\ydiagram{2}};
    \node (47)  at (-4,7) {\ydiagram{2,1}};
    \node (37)  at (-3,7) {\ydiagram{2,2}};
    \node (27)  at (-2,7) {\ydiagram{3,2}};
    \node (17)  at (-1,7) {\ydiagram{3,3}};

    \foreach \from/\to in {
11/01, 01/-11, -11/-21, -21/-31, -31/-41, -41/-51,  
22/12, 12/02, 02/-12, -12/-22, -22/-32, -32/-42,  
33/23, 23/13, 13/03, 03/-13, -13/-23, -23/-33,  
44/34, 34/24, 24/14, 14/04, 04/-14, -14/-24,  
55/45, 45/35, 35/25, 25/15, 15/05, 05/-15,  
66/56, 56/46, 46/36, 36/26, 26/16, 16/06,
77/67, 67/57, 57/47, 47/37, 37/27, 27/17,
11/12, 01/02, -11/-12, -21/-22, -31/-32, -41/-42,  
22/23, 12/13, 02/03, -12/-13, -22/-23, -32/-33,  
33/34, 23/24, 13/14, 03/04, -13/-14, -23/-24,  
44/45, 34/35, 24/25, 14/15, 04/05, -14/-15,  
55/56, 45/46, 35/36, 25/26, 15/16, 05/06,
66/67, 56/57, 46/47, 36/37, 26/27, 16/17}
        \draw[help lines] (\from) -- (\to);
    \end{tikzpicture}\ytableausetup{boxsize = normal}
  \caption{An example of a growth diagram for \( r=2 \) and \( d = 5 \). We can take the bottom left corner as \( (1,1) \).}
  \label{fig:example-cgd}
\end{figure}
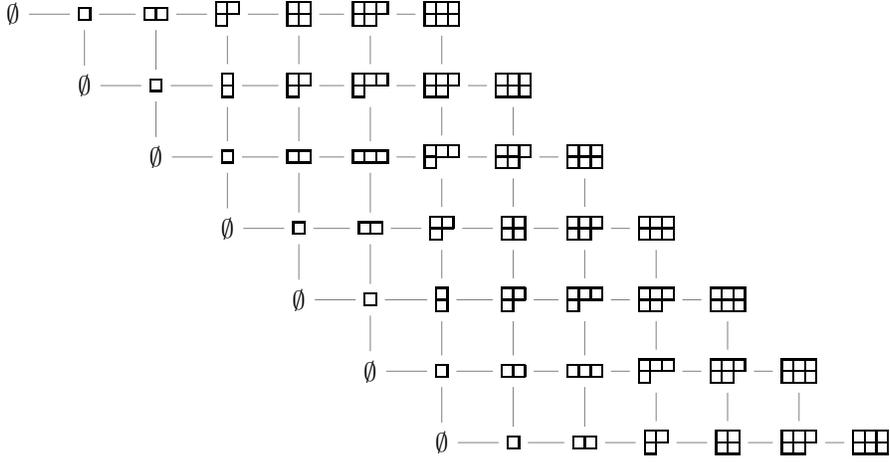

\begin{Remark}
  \label{rem:path-defines-uniquely}
A path though \( \gamma \), as defined in Section~\ref{sec:growth-diagrams} from a node \( (i,i) \) to a node \( (j,j+k) \) (i.e. nodes lying on the left and right edges of the diagram) completely defines all of the partitions lying in the rectangular region the path spans. In the case of cylindrical diagrams the extra condition that \( \gamma_{i(i+k)} \) is the rectangular shape means such a path completely determines the entire cylindrical growth diagram.
\end{Remark}

\begin{Proposition}[{\cite[Lemma~6.4 and Theorem~6.5]{Speyer:2014gg}}]
  \label{prp:fibre-indexed-by-cgd}
For an associahedron \( \associahedron \subset \Ss(\sq^k)(\RR) \) the map \( \gamma \) is a cylindrical growth diagram. The associahedra which tile \( \Ss(\sq^k)(\RR) \) are labelled by pairs \( (s,\gamma) \) of a circular ordering \( s \) and a cylindrical growth diagram \( \gamma \).
\end{Proposition}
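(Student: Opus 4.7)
My strategy is to analyse the stratification of $\associahedron$ by codimension and use Theorem~\ref{thm:fibre-theorem} at each level to define $\gamma$ and verify the growth diagram axioms.

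First, at codimension one: each non-adjacent pair $(i,j)$ with $2 \le j-i \le k-2$ labels a facet of $\associahedron$ lying over a two-component nodal curve. Theorem~\ref{thm:fibre-theorem} equips the unique node with a pair of complementary partitions, and I set $\gamma_{ij}$ to be the one on the side away from $\{s(i),\ldots,s(j-1)\}$. Since the fibre is a reduced union of real points, the Schubert intersection on the \emph{near} component must be zero-dimensional; the dimension count gives $(j-i) + |\gamma_{ij}^\cc| = r(d-r)$, hence $|\gamma_{ij}| = j-i$. For the boundary values $j-i \in \{0,1,k-1,k\}$ the partition is forced by its size alone, which yields condition~\ref{item:j-i=k_partition} and the cylindrical condition $\gamma_{i(i+k)} = \Lambda$ of Definition~\ref{def:cylindrical-growth-diagram}.

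Next, pass to codimension two to verify condition~\ref{item:add-singe-box}. A codimension-two stratum of $\associahedron$ corresponds to a curve with three components arranged in a chain; Theorem~\ref{thm:fibre-theorem} labels both nodes with partitions. Smoothing each node in turn produces the two adjacent codimension-one facets, and node-labelling compatibility forces the inclusions $\gamma_{ij} \subset \gamma_{(i-1)j}$ and $\gamma_{ij} \subset \gamma_{i(j+1)}$; combined with condition~\ref{item:j-i=k_partition}, these inclusions differ by a single box. For the jeu de taquin condition~\ref{item:jdt-condition}, I would examine a codimension-three stratum with four components, containing $\{s(i-1)\}$, $\{s(i),\ldots,s(j-1)\}$, $\{s(j)\}$ and $\{s(j+1),\ldots,s(i-2)\}$, whose three nodes interpolate all four corners $\gamma_{ij},\gamma_{(i-1)j},\gamma_{i(j+1)},\gamma_{(i-1)(j+1)}$. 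When $\gamma_{(i-1)(j+1)}\setminus\gamma_{ij}$ consists of two non-edge-adjacent boxes there are two distinct intermediate partitions of the correct size, and the non-degeneracy of the three Schubert conditions on the intermediate components forces the two paths around the square to take different intermediates.

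For the second assertion, the map $\associahedron \mapsto (s,\gamma)$ is clearly well defined. Injectivity: two associahedra giving the same $(s,\gamma)$ lie over the same base and carry identical node labellings on every codimension-one facet, so they are identified by uniqueness of path-lifting in the covering $\Ss(\sq^k)(\RR) \to \M{k+1}(\RR)$. Surjectivity reduces to a degree count: the number of cylindrical growth diagrams for $(r,d)$ with $k = r(d-r)$ equals the number of standard Young tableaux of shape $\Lambda$ (read along any path from a node $(i,i)$ to $(i,i+k)$), which matches the degree of the covering as computed by classical Schubert calculus. The principal obstacle is condition~\ref{item:jdt-condition}: translating the combinatorial statement ``two added boxes do not share an edge'' into the geometric statement that the two intermediate Schubert conditions on the middle components cannot coincide requires a genuinely local four-component calculation, and it is precisely where Speyer's explicit description of multi-component fibres in Theorem~\ref{thm:fibre-theorem} becomes indispensable.
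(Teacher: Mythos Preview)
The paper does not give its own proof of this proposition: it is stated with a citation to \cite[Lemma~6.4 and Theorem~6.5]{Speyer:2014gg} and left at that, so there is no argument in the paper to compare against. Your outline is in the spirit of Speyer's original argument --- read off node labellings from Theorem~\ref{thm:fibre-theorem} at successively deeper strata and verify the growth diagram axioms stratum by stratum --- and the size count for condition~\ref{item:j-i=k_partition}, the containment argument for~\ref{item:add-singe-box}, and the degree count for surjectivity are all sound in outline.

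There is, however, a genuine gap at condition~\ref{item:jdt-condition}. Your sentence ``the non-degeneracy of the three Schubert conditions on the intermediate components forces the two paths around the square to take different intermediates'' is not an argument: you have not explained \emph{why} the two intermediate node labellings must be distinct when the two added boxes are non-adjacent. What is actually needed is a computation (or at least an appeal to one) on a three- or four-component curve showing that the Schubert intersection on a $\PP^1$ with two marked points and two nodes, carrying conditions $\sq,\sq,\gamma_{ij}^\cc,\gamma_{(i-1)(j+1)}$, splits into exactly two reduced points corresponding to the two possible intermediate partitions --- this is precisely the content of Speyer's Lemma~6.4, and you have correctly identified it as the crux without supplying it. A minor point: the base is $\M{k}(\RR)$, not $\M{k+1}(\RR)$; the extra marked point at infinity only enters later in Section~\ref{sec:equiv-monodr} when the paper specialises to $k=n+1$.
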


Note the cylindrical growth diagram depends on the particular representative of the circular ordering \( s \in S_k / D_k \) that we choose. If we choose another representative the cylindrical growth diagram shifted or we take the mirror image (this comes from the action of \( D_k \)).

\subsubsection{Wall crossing in the fundamental case}
\label{sec:wall-crossing}

We now recall a description of how these associahedra are joined together. Fix an associahedron \( \associahedron \) in \( \Ss(\sq^k)(\RR) \) labelled by \( (s,\gamma) \). Let \( (\hat{s},\hat{\gamma}) \) be the labelling of the associahedra \( \hat{\associahedron} \) joined to \( \associahedron \) by the facet \( \associahedron_{pq} \). Using the description of \( \M{k}(\RR) \) the circular ordering \( \hat{s} \) is obtained from \( s \) by reversing the order of \( s(p),s(p+1),\ldots,s(q-1) \).

\begin{Proposition}[Proposition~6.7 \cite{Speyer:2014gg}]
  \label{prp:crossing-walls}
The cylindrical growth diagram \( \hat{\gamma} \) is given by
\begin{equation}
  \label{eq:new-cgd}
\hat{\gamma}_{ij} =
\begin{cases}
\gamma_{ij} &\text{if } [i,j]\cap [p,q] = \emptyset \text{ or } [p,q] \subseteq [i,j], \\
\gamma_{(p+q-j)(p+q-i)} &\text{if } [i,j] \subseteq [p,q].
\end{cases}
\end{equation}
\end{Proposition}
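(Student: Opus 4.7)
The plan is to read off $\gamma_{ij}$ and $\hat\gamma_{ij}$ from the node labelling $\nu$ on a codimension-two face of $\associahedron$ and $\hat\associahedron$ respectively, chosen so that both the $[p,q]$ split and the $[i,j]$ split are already realised on the underlying stable curve. Since $\associahedron$ and $\hat\associahedron$ share the facet $\associahedron_{pq}$, the $\associahedron_{pq}$-restriction of such a face lies in both closures; so if we can see the partition of interest at a single generic point of this restriction, we can compare it on both sides. A suitable simultaneous degeneration exists precisely when $[i,j]$ and $[p,q]$ are compatible intervals, i.e.\ either disjoint or nested, and these are the three cases listed in the statement.

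First I would set up the geometry at the wall. By Theorem~\ref{thm:fibre-theorem} a generic point of $\associahedron_{ij}$ lies over a stable curve $C' \cup C''$ with one node $d$, where $C'$ carries $s(i),s(i+1),\ldots,s(j-1)$; then $\gamma_{ij}=\nu(C'',d)$. By the cited \cite[Lemma~7.1]{Speyer:2014gg} this partition is constant on $\associahedron_{ij}$, so it may equally well be computed on any deeper face of $\associahedron$ whose underlying stable curve refines $C'\cup C''$. The same remark applies to $\associahedron_{pq}$ and to $\hat\associahedron_{pq}$, and the wall $\associahedron_{pq}$ is identified with $\hat\associahedron_{pq}$ as a cell in $\Ss(\sq^k)(\RR)$.

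Next I would carry out the case analysis, in each case choosing a codimension-two face of $\associahedron$ lying over a three-component stable curve on which both splits are visible, and then observing how the reversal on $C'$ (induced by crossing $\associahedron_{pq}$) acts on the relevant node labelling. If $[i,j]\cap[p,q]=\emptyset$, the $[i,j]$ split is realised by a node that separates two components both contained in $C''$; crossing the wall changes only the order of marks on $C'$ and leaves this node labelling alone, so $\hat\gamma_{ij}=\gamma_{ij}$. If $[p,q]\subseteq[i,j]$, choose a three-component curve in which a ``middle'' component carries the marks $s(i),\ldots,s(p-1),s(q),\ldots,s(j-1)$ and is attached on one side to $C'$ and on the other to the third component carrying the remaining marks; again the $[i,j]$-defining node labelling is untouched by crossing the wall, giving $\hat\gamma_{ij}=\gamma_{ij}$. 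If $[i,j]\subseteq[p,q]$, choose a three-component curve in which $C'$ itself has split, with one piece carrying $s(i),\ldots,s(j-1)$ and the other carrying $s(p),\ldots,s(i-1),s(j),\ldots,s(q-1)$, both attached to $C''$. After crossing the wall the order on $C'$ is reversed, so $\hat s(p+m)=s(q-1-m)$ for $0\le m\le q-p-1$; thus $\{\hat s(i),\ldots,\hat s(j-1)\}=\{s(p+q-j),\ldots,s(p+q-i-1)\}$, and the node labelling of the component away from these marks is, by the defining property of cylindrical growth diagrams applied to $s$, exactly $\gamma_{(p+q-j)(p+q-i)}$.

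The main obstacle is the compatibility step: verifying that the codimension-two faces I pick genuinely exist inside the closure of $\associahedron_{pq}$ (and of $\hat\associahedron_{pq}$) and that the $\nu$-values used to define $\gamma_{ij}$ and $\hat\gamma_{ij}$ at a generic point of $\associahedron_{ij}$, respectively $\hat\associahedron_{ij}$, really agree with the $\nu$-values at these deeper strata. This is exactly the independence statement underlying the well-definedness of the cylindrical growth diagram, available from the flatness of $\Ss(\sq^k)\to\M{k}$ in Theorem~\ref{thm:props-of-G-and-S} together with the fact that the stratification of $\M{k}(\RR)$ by associahedra lifts compatibly to $\Ss(\sq^k)(\RR)$; once this is in hand, the three case computations above are just bookkeeping on the reversal $\hat s_{pq}$ and yield the formula~(\ref{eq:new-cgd}).
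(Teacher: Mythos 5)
Your overall strategy mirrors the paper's brief sketch (partitions are constant along the relevant divisor, so crossing the wall only re-indexes them relative to the circular ordering), and your re-indexation computation $\hat s(p+m)=s(q-1-m)$ is correct and yields the formula. However, the case $[i,j]\subseteq[p,q]$ contains two concrete slips that break the argument as written.

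First, the three-component curve you describe does not exist with the circular ordering $s$. You put a piece carrying $s(i),\ldots,s(j-1)$ and a piece carrying $s(p),\ldots,s(i-1),s(j),\ldots,s(q-1)$ \emph{both attached to} $C''$, i.e.\ a chain with $C''$ in the middle. But then $C''$ has two nodes while carrying only the contiguous arc $s(q),\ldots,s(p-1)$, and the second piece of $C'$ carries a non-contiguous set of labels without itself carrying a second node --- this is incompatible with the circular ordering. The correct degeneration in the closure of $\associahedron_{pq}$ is the chain $C''-C'_2-C'_1$ with $C''$ carrying the labels outside $[p,q]$, $C'_2$ carrying the labels in $[p,q]\setminus[i,j]$ (with two nodes), and $C'_1$ carrying $s(i),\ldots,s(j-1)$. (Note in particular that this curve does realise the $[p,q]$-split at the $C''$--$C'_2$ node; yours does not realise it at all, so it is not in $\overline{\associahedron_{pq}}$ and cannot be used to compare $\gamma$ and $\hat\gamma$.)

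Second, even on the corrected curve, the node you use to read off $\gamma_{ij}$ separates $\{s(i),\ldots,s(j-1)\}$ from the rest, but the node needed to read off $\hat\gamma_{ij}$ separates $\{\hat s(i),\ldots,\hat s(j-1)\}=\{s(p+q-j),\ldots,s(p+q-i-1)\}$, which is a \emph{different} subset of $[p,q]$ in general (they agree only when the interval $[i,j]$ is symmetric about the midpoint of $[p,q]$). So your sentence ``the node labelling of the component away from these marks is \ldots $\gamma_{(p+q-j)(p+q-i)}$'' is reading an index off a node that does not appear on the curve you picked. The cleanest fix is to keep your codimension-two face but argue in the other direction: the $C'_1$--$C'_2$ node, read in $s$-coordinates, gives $\gamma_{ij}$; after crossing the wall the labels on $C'_1$ occupy positions $p+q-j,\ldots,p+q-i-1$ in $\hat s$, so the same $\nu$-value equals $\hat\gamma_{(p+q-j)(p+q-i)}$; now either invoke the involutive symmetry $p+q-(p+q-j)=j$, $p+q-(p+q-i)=i$, or repeat the argument on the codimension-two face realising the $[p+q-j,p+q-i]$-split of $s$. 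With either fix the case goes through, and the other two cases are fine as you wrote them.
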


\begin{proof}
We will not repeat the proof here except to say that since the partitions \( \gamma_{ij} \) are constant along the relevant divisor of \( \M{k}(\RR) \), the partitions do not change, only the indexation relative to the circular ordering changes. Considering how the indexation changes allows one to write the conditions in~(\ref{eq:new-cgd}).
\end{proof}

Note \( \hat{\gamma} \) can be determined recursively by the information given in~(\ref{eq:new-cgd}). The pairs \( (i,j) \) which appear in~(\ref{eq:new-cgd}) are those for which \( \associahedron_{ij} \) intersects \( \associahedron_{pq} \). The rule means when we cross a wall we flip certain triangles and leave others fixed. This is depicted in Figure~\ref{fig:cross-walls-flip}. The red triangle is flipped about the axis shown, green triangles are fixed, and other areas are computed recursively (or using the cyclicity properties of the diagram).

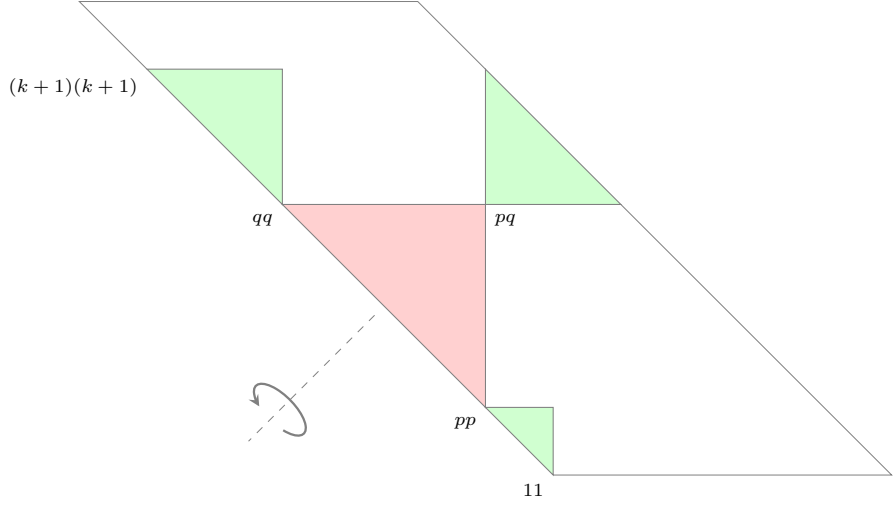
\begin{figure}
  \centering
  \begin{tikzpicture}[scale=0.9]
    \node (ll) at (0,0) {};
    \node (ul) at (-7,7) {};
    \node (lr) at (5,0) {};
    \node (ur) at (-2,7) {};

    \node[anchor=north west] (pq) at (-1,4) {\scriptsize{\( pq \)}};
    \node[anchor=north east] (qq) at (-4,4) {\scriptsize{\( qq \)}};
    \node[anchor=north east] (pp) at (-1,1) {\scriptsize{\( pp \)}};

    \draw[fill=red!30,fill opacity=0.6,draw=gray] (-1,1) -- (-1,4) -- (-4,4) -- cycle;

    \node[anchor=north east] (11) at (0,0) {\scriptsize{\( 11 \)}};
    \node[anchor=north east] (k1k1) at (-6,6) {\scriptsize{\( (k+1)(k+1) \)}};
    \node[anchor=north east] (ut) at (-1,6) {};
    \node[anchor=north east] (lt) at (1,4) {};
    \node[anchor=north east] (qk1) at (-4,6) {};
    \node[anchor=north east] (1p) at (0,1) {};

    \draw[fill=green!30,fill opacity=0.6,draw=gray] (-1,6) -- (-1,4) -- (1,4) -- cycle;
    \draw[fill=green!30,fill opacity=0.6,draw=gray] (-4,4) -- (-4,6) -- (-6,6) -- cycle;
    \draw[fill=green!30,fill opacity=0.6,draw=gray] (-1,1) -- (0,1) -- (0,0) -- cycle;

    \node (mid) at (-2.5,2.5) {};
    \draw[help lines, dashed] (mid) -- (-4.5,0.5) node[near end,solid] {\AxisRotator[rotate=45]};

    \draw[help lines] (0,0) -- (5,0) -- (1,4);
    \draw[help lines] (-1,6) -- (-2,7) -- (-7,7) -- (-6,6);

  \end{tikzpicture}
  \caption{Crossing walls and flipping triangles.}
  \label{fig:cross-walls-flip}
\end{figure}

\subsubsection{Another realisation of $ \Ss(\lamb) $}
\label{sec:another-realisation}

Before we are able to describe the CW-structure for more general \( \lamb \), we realise \( \Ss(\lamb) \) as a subvariety of \( \Ss(\Box^{\tilde{k}}) \), in fact, the real points will be a CW-subcomplex. Here, \( \tilde{k} = \abs{\lamb} \). We should think of obtaining the family \( \Ss(\lamb) \) inside \( \Ss(\Box^{\tilde{k}}) \) by colliding the first \(  \abs{\lambda_1}  \) marked points in such a way to obtain \( \lambda_1 \), the next \( \abs{\lambda_2} \) to obtain \( \lambda_2 \), and so on. 

For our purposes here, we will take \( \M{2} \) to be a single point. With this convention, we have an embedding \( \M{k} \times \prod_{i = 1}^k \M{ \abs{\lambda_i }+ 1} \) into \( \M{\tilde{k}} \) by sending the tuple \( (C,C^{(1)},C^{(2)},\ldots,C^{(k)}) \) to the stable curve obtained by the following process:
\begin{itemize}
\item If \( \abs{\lambda_i} \ge 2 \), glue the last marked point of \( C_i \) and the \( i^{\text{th}} \) marked point of \( C \).
\item The \( l^{\text{th}} \) marked point of \( C_i \) is renumbered \( l + \sum_{j=1}^{i-1} \abs{\lambda_j} \), for \( 1 \le l \le \abs{\lambda_i} \).
\item If \( \abs{\lambda_i} = 1 \) then the \( i^{\text{th}} \) marked point of \( C \) is renumbered \( \sum_{j=1}^i \abs{\lambda_j} \).
\end{itemize}
This is an example of a \emph{clutching map} as described in~[Section~3]\cite{Knudsen:1983ww} where it is also shown that this is in fact a closed embedding. Figure~\ref{fig:generic-point-in-prod} shows generically what such a stable curve looks like.

\begin{figure}[b]
  \centering
  \begin{tikzpicture}
\draw[help lines,postaction={decorate,decoration={
      markings,
      mark=at position   40/360 with {\node[black,circle,fill,inner sep=0,minimum size=2pt,label={[gray]220:$1$},label={[gray] 40:$3$}] (i1) {};},
      mark=at position  120/360 with {\node[black,circle,fill,inner sep=0,minimum size=2pt,label={[gray]300:$2$},label={[gray]120:$5$}] (i2) {};},
      mark=at position  200/360 with {\node[black,circle,fill,inner sep=0,minimum size=2pt,label={[gray] 20:$3$},label={[gray]200:$4$}] (i3) {};},
      mark=at position  320/360 with {\node[black,circle,fill,inner sep=0,minimum size=2pt,label={[gray]140:$4$},label={[gray]320:$3$}] (i4) {};}
  }}]
(0,0) circle (40pt);

\draw[help lines, postaction={decorate,decoration={
      markings,
      mark=at position    0/360 with {\node[black,circle,fill,inner sep=0,minimum size=4pt,label={[black]  0:$2$},label={[gray]180:$2$}] (2) {};},
      mark=at position  130/360 with {\node[black,circle,fill,inner sep=0,minimum size=4pt,label={[black]130:$1$},label={[gray]310:$1$}] (1) {};}
  }}]
(40:65pt) circle (25pt);

\draw[help lines, postaction={decorate,decoration={
      markings,
      mark=at position    0/360 with {\node[black,circle,fill,inner sep=0,minimum size=4pt,label={[black]  0:$4$},label={[gray]180:$2$}] (4) {};},
      mark=at position  100/360 with {\node[black,circle,fill,inner sep=0,minimum size=4pt,label={[black]100:$3$},label={[gray]280:$1$}] (3) {};},
      mark=at position  160/360 with {\node[black,circle,fill,inner sep=0,minimum size=4pt,label={[black]160:$6$},label={[gray]  0:$4$}] (6) {};},
      mark=at position  200/360 with {\node[black,circle,fill,inner sep=0,minimum size=4pt,label={[black]200:$5$},label={[gray]  0:$3$}] (5) {};}
  }}]
(120:65pt) circle (25pt);

\draw[help lines, postaction={decorate,decoration={
      markings,
      mark=at position  100/360 with {\node[black,circle,fill,inner sep=0,minimum size=4pt,label={[black]100:$7$},label={[gray]280:$1$}] (7) {};},
      mark=at position  160/360 with {\node[black,circle,fill,inner sep=0,minimum size=4pt,label={[black]160:$8$},label={[gray]340:$2$}] (8) {};},
      mark=at position  270/360 with {\node[black,circle,fill,inner sep=0,minimum size=4pt,label={[black]270:$9$},label={[gray] 90:$3$}] (9) {};}
  }}]
(200:65pt) circle (25pt);

\draw[help lines, postaction={decorate,decoration={
      markings,
      mark=at position    0/360 with {\node[black,circle,fill,inner sep=0,minimum size=4pt,label={[black]  0:$11$},label={[gray]180:$2$}] (11) {};},
      mark=at position  230/360 with {\node[black,circle,fill,inner sep=0,minimum size=4pt,label={[black]230:$10$},label={[gray] 50:$1$}] (10) {};}
  }}]
(320:65pt) circle (25pt);

\node at (  0: 0pt) {$C$};
\node at ( 40:65pt) {$C_1$};
\node at (120:65pt) {$C_2$};
\node at (200:65pt) {$C_3$};
\node at (320:65pt) {$C_4$};
  \end{tikzpicture}
  \caption{A generic point in \( \M{4} \times \prod_{i = 1}^k \M{\left| \lambda_i \right| + 1} \) when \( \abs{\lambda_2} = 4 \), \( \abs{\lambda_3} = 3 \) and \( \abs{\lambda_1} = \abs{\lambda_4} = 2 \). The original label of each marked point is shown in grey on the inside of each curve.}
  \label{fig:generic-point-in-prod}
\end{figure}
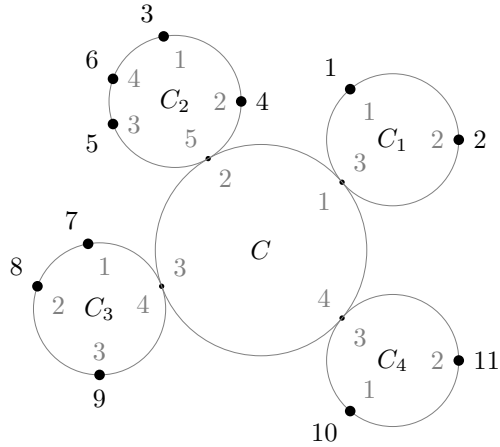

Restrict the family \( \Ss(\Box^{\tilde{k}}) \) to \( \M{k} \times \prod_{i = 1}^k \M{\left| \lambda_i \right| + 1} \) and let \( \mathcal{Y} \) be the connected components where the \( k \) central nodes are labelled by \( \lambda_1,\lambda_2,\ldots,\lambda_k \). As families over \( \M{k} \times \prod_{i = 1}^k \M{\left| \lambda_i \right| + 1} \), \( \mathcal{Y} \) is isomorphic to \( \Ss(\lamb)\times \prod_{i=1}^k \Ss(\Box^{\left| \lambda_i \right|},\lambda_i^\cc) \).

\subsubsection{Dual equivalence cylindrical growth diagrams}
\label{sec:decgd}

We now describe the dual equivalence version of cylindrical growth diagrams. Let \( \lamb \) be a sequence of partitions such that \( \abs{\lamb} = r(d-r) \). It will be convenient here to always take our indices, when referring to this sequence, modulo \( k \). That is, by \( \lambda_l \) we will always mean \( \lambda_{(l \mod k)} \). 

\begin{Definition}
\label{def:decgd}
  A \emph{dual equivalence cylindrical growth diagram} (or decgd for short) of shape \( \lamb \) is a dual equivalence growth diagram \( \gamma \) on \( \II_k \) with \( \gamma_{i(i+k)} = \Lambda \), and such that \( \gamma_{i(i+1)} = \lambda_{i} \). We denote the set of decgd's of shape \( \lamb \) by \( \decgd{\lamb} \).
\end{Definition}

As an example we can take \( r=5 \) and \( d=2 \) again. If we choose 
\begin{equation*}
\ytableausetup{smalltableaux} 
\lamb = \left( \,\ydiagram{2,1}, \,\ydiagram{1}\,\!, \,\ydiagram{2} \right)
\ytableausetup{nosmalltableaux}
\end{equation*}
then Figure~\ref{fig:example-decgd} gives an example of a decgd of shape \( \lamb \). Since shapes with only a single box, as well as shapes of normal or antinormal shape (see Theorem~\ref{thm:props-of-dual-equiv}~(\ref{item:normal-all-de})) only have a single dual equivalence class we do not indicate the dual equivalence class for edges that correspond to such a shape.

\begin{figure}
  \centering
  \ytableausetup{nosmalltableaux}
  \ytableausetup{boxsize=0.4em}
  \begin{tikzpicture}[scale=0.95]
    \node (11)  at (-1,1) {\( \emptyset \)};
    \node (-11) at (1,1)  {\ydiagram{2}};
    \node (-21) at (2,1)  {\ydiagram{2,1}};
    \node (-51) at (5,1)  {\ydiagram{3,3}};
    \node (44)  at (-4,4) {\( \emptyset \)};
    \node (14)  at (-1,4) {\ydiagram{2,1}};
    \node (-14) at (1,4)  {\ydiagram{3,2}};
    \node (-24) at (2,4)  {\ydiagram{3,3}};

    \node (55)  at (-5,5) {\( \emptyset \)};
    \node (45)  at (-4,5) {\ydiagram{1}};
    \node (15)  at (-1,5) {\ydiagram{3,1}};
    \node (-15) at (1,5)  {\ydiagram{3,3}};

    \node (77)  at (-7,7) {\( \emptyset \)};
    \node (57)  at (-5,7) {\ydiagram{2}};
    \node (47)  at (-4,7) {\ydiagram{2,1}};
    \node (17)  at (-1,7) {\ydiagram{3,3}};

    \foreach \from/\to in {
11/-11, -11/-21, -21/-51,
44/14, 14/-14, -14/-24,
55/45, 45/15, 15/-15,
77/57, 57/47, 47/17,
11/14, -11/-14, -21/-24,
44/45, 14/15, -14/-15,
55/57, 45/47, 15/17}
        \draw[help lines] (\from) -- (\to);
\ytableausetup{boxsize = normal}
\ytableausetup{smalltableaux}
\node[fill=white] at (-2.5,5) {\ytableaushort{\none12,3}};
\node[fill=white] at (0,4) {\ytableaushort{\none2,1}};
\node[fill=white] at (1,2.5) {\ytableaushort{\none\none1,23}};
    \end{tikzpicture}\ytableausetup{boxsize = normal}
  \caption{An example of a dual equivalence cylindrical growth diagram for \( r=5 \) and \( d = 2 \).}
  \label{fig:example-decgd}
\end{figure}
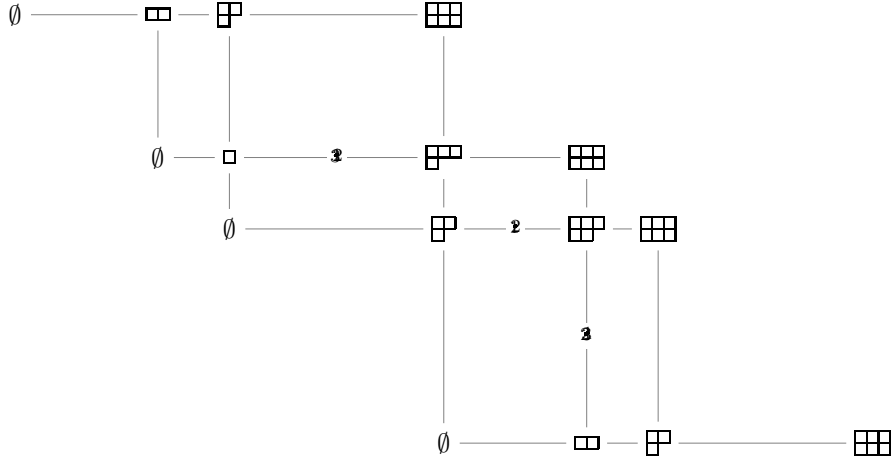

\subsubsection{Reduction of cylindrical growth diagrams}
\label{sec:reduct-cgds}

The reason for the strange choice of layout in Figure~\ref{fig:example-decgd} is the following. If we superimpose the diagram on top of Figure~\ref{fig:example-cgd} we can see that it was simply obtained by forgetting certain nodes in Figure~\ref{fig:example-cgd} but remembering the dual equivalence classes defined by the paths between nodes that we kept. In fact it is the reduction modulo \( m \) of the cylindrical growth diagram from Figure~\ref{fig:example-cgd}.

\begin{Lemma}
  \label{lem:sets-adapted}
Let \( \tilde{k} = r(d-r) \). The set \( \II_{\tilde{k}} \) is adapted to \( m(i) = \abs{\lambda_i} \) and 
\begin{equation*}
  \setc{(i,j) \in \ZZ^2_+}{\bar{m}(i,j) \in \II_{\tilde{k}}} = \II_k.
\end{equation*}
The reduction modulo \( m \) of a cylindrical growth diagram on \( \II_{\tilde{k}} \) for \( (d,r) \) is a decgd on \( \II_k \) for \( (d,r) \) of shape \(  \gamma_{\bar{m}(1,2)}, \gamma_{\bar{m}(2,3)}, \ldots, \gamma_{\bar{m}(k,k+1)} \).
\end{Lemma}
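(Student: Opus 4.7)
The plan is to verify the three claims in sequence; each is essentially bookkeeping with the linear inequalities defining the strip $\II_n$ and the definition of reduction modulo $m$.

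First I would check that $\II_{\tilde k}$ is adapted to $m$. The set $\II_{\tilde k}$ is cut out by the two affine inequalities $0 \le j - i$ and $j - i \le \tilde k$, so on any axis-aligned lattice rectangle the quantity $j - i$ attains its extrema at two of the four corners. Applied to the rectangle with corners $\bar m(i,j), \bar m(i-1,j), \bar m(i,j+1), \bar m(i-1,j+1)$, if all four corners satisfy both inequalities then so does every lattice point in between, which is exactly the adaptedness condition.

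Next I would identify the preimage. Since $m$ is periodic of period $k$ (because the indices of $\lamb$ are taken modulo $k$) and $\sum_{l=1}^k m(l) = |\lamb| = \tilde k$, one has $m_s(i, i+k) = \tilde k$ for every $i$; meanwhile $m_s(i,j) = \sum_{l=i}^{j-1} m(l)$ is a strictly increasing function of $j \ge i$ with positive summands. The condition $\bar m(i,j) \in \II_{\tilde k}$ translates to $0 \le m_s(i,j) \le \tilde k$, and these two monotonicity facts make this equivalent to $0 \le j - i \le k$, that is, $(i,j) \in \II_k$.

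Finally, given that $\II_{\tilde k}$ is adapted and its reduction matches $\II_k$, Proposition~\ref{prp:reduction-mod-m} immediately produces a dual equivalence growth diagram on $\II_k$ from any cylindrical growth diagram $\tilde\gamma$ on $\II_{\tilde k}$. To promote this to a decgd I would only check the cylindrical condition: $\gamma_{i(i+k)} = \tilde\gamma_{\bar m(i,i+k)} = \tilde\gamma_{(\hat m(i), \hat m(i) + \tilde k)}$, which lies on the ``far diagonal'' of $\II_{\tilde k}$ and hence equals $\Lambda$ by the cylindrical condition for $\tilde\gamma$. The shape $(\gamma_{\bar m(1,2)}, \ldots, \gamma_{\bar m(k,k+1)})$ is then the correct one essentially by unpacking Definition~\ref{def:decgd}. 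The only delicate point, which I would be careful about, is making sure the periodicity of $m$ matches the shift $i \mapsto i+k$ on the cylindrical condition; but this is precisely ensured by $\sum_{l=1}^k m(l) = \tilde k$.
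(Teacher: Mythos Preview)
Your proposal is correct and follows essentially the same three-step structure as the paper's proof: adaptedness from the affine/convex nature of the defining inequalities, identification of the preimage, and verification of the cylindrical condition via $m_s(i,i+k)=\tilde k$. The only cosmetic difference is that for the preimage you use monotonicity of $m_s(i,\cdot)$, whereas the paper phrases the same fact as a pigeonhole argument on how many $\lambda_l$ appear in the sum; both arguments are equivalent once one notes each $m(l)=|\lambda_l|>0$.
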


\begin{proof}
The only way for all four corners of a rectangular subset of \( \ZZ^2_+ \) to be contained in \( \II_{\tilde{k}} \) is if all its vertices are in \( \II_{\tilde{k}} \). Hence \( \II_{\tilde{k}} \) is adapted to \( m \).

Suppose that \( (i,j) \in \II_k \), thus \( j-i \le k \). We would like to show \( \bar{m}(i,j) \in \II_{\tilde{k}} \), that is we would like to show \( \hat{m}(j)-\hat{m}(i) = m_s(i,j) \le \tilde{k} \). But
\begin{equation*}
  m_s(i,j) = \sum_{l=i}^{j-1} m(l) = \sum_{l=i}^{j-1} \abs{\lambda_{(l \mod k)}},
\end{equation*}
and since \( j-i \le k \) each \( \lambda_l \) occurs at most once in the above sum. By the assumption that \( \abs{\lamb} = \tilde{k} \) we have \( m_s(i,j) \le \tilde{k} \) as required. Now consider \( (i,j) \in \ZZ^2_+ \) such that \( m_s(i,j) \le \tilde{k} \). If \( j-i > k \) then by the pigeonhole principle each \( \lambda_l \) must occur at least once, and some \( \lambda_l \) must occur twice in the sum
\begin{equation*}
  m_s(i,j) = \sum_{l=i}^{j-1} \abs{\lambda_{(l \mod k)}},
\end{equation*}
contradicting the fact that \( m_s(i,j) \le \tilde{k} \). This proves the second claim.

The only thing left to check to ensure that the reduction modulo \( m \) of a cylindrical growth diagram on \( \II_{\tilde{k}} \) is a decgd on \( \II_k \) is that for any \( i \), \( \bar{m}(i,i+k) = (j,j+\tilde{k}) \) for some \( j \). Equivalently we check that \( m_s(i,i+k) = \tilde{k} \). This is straightforward since the sum
\begin{equation*}
  m_s(i,i+k) = \sum_{l=i}^{i+k-1} \abs{\lambda_{(l \mod k)}}
\end{equation*}
contains one of each \( \lambda_l \) appearing in \( \lamb \) and by assumption \( \abs{\lamb} = \tilde{k} \).
\end{proof}

\begin{Proposition}[Proposition~8.1 in~\cite{Speyer:2014gg}]
  \label{prp:number-of-decgds}
Every decgd on \( \II_k \) is the restriction of a cylindrical growth diagram on \( \II_{\tilde{k}} \). The number of decgd's of shape \( \lambda_{\bullet} \) is \( c^{\Lambda}_{\lamb} \).
\end{Proposition}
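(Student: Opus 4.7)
The plan is to tackle surjectivity of reduction modulo $m$ first, then read off the count from Speyer's geometry. Given a decgd $\gamma$ on $\II_k$ of shape $\lamb$, I want to produce a cylindrical growth diagram $\tilde\gamma$ on $\II_{\tilde k}$ whose reduction modulo $m$ is $\gamma$. By Remark~\ref{rem:path-defines-uniquely} it is enough to specify $\tilde\gamma$ along one path through $\II_{\tilde k}$ running from a left-edge vertex to a right-edge vertex. I would fix such a path so that it factors through the vertices $\bar m(i,j)$ lying on a boundary staircase of $\II_k$, and on each edge of $\II_k$ traversed by this staircase I would pick an arbitrary representative of the attached dual equivalence class $\alpha_{ij}$ or $\beta_{ij}$. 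Concatenating these representatives gives a tableau along the chosen path of $\II_{\tilde k}$ with the required endpoints (they match because $\gamma$ has shape $\lamb$), and then Remark~\ref{rem:path-defines-uniquely} extends the data uniquely to a cylindrical growth diagram $\tilde\gamma$.

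The next step is to verify that the reduction modulo $m$ of $\tilde\gamma$ recovers the original $\gamma$. Equality of the partitions $\tilde\gamma_{\bar m(i,j)}=\gamma_{ij}$ is immediate from the construction, and along the chosen path the dual equivalence classes agree by our choice of representatives. For the remaining edges of $\II_k$ I would propagate by induction on distance from the path: the reduction of $\tilde\gamma$ satisfies the shuffle axiom~\ref{item:dual-equiv-jdt-condition} by Proposition~\ref{prp:reduction-mod-m}, while $\gamma$ satisfies it by assumption, so uniqueness of the output of the shuffle operation (implicit in Definition~\ref{def:shuffling-classes} and Proposition~\ref{prp:shuffle-dual-equiv}) forces the two to agree one rectangular cell at a time. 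This propagation is the step I expect to require the most care, though the axioms themselves essentially force the conclusion.

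For the cardinality, I would argue geometrically. By Theorem~\ref{thm:real-points-labelling} the map $\Ss(\lamb)(\RR)\to\M{k}(\RR)$ is a topological covering whose fibre, being a reduced transverse intersection of Schubert varieties, has size $c^\Lambda_{\lamb}$ by the classical Littlewood--Richardson interpretation. Hence exactly $c^\Lambda_{\lamb}$ top-dimensional associahedra of $\Ss(\lamb)(\RR)$ lie above any fixed top-dimensional associahedron of $\M{k}(\RR)$. It remains to identify these associahedra with decgd's of shape $\lamb$, which I would do by transporting the labelling of Proposition~\ref{prp:fibre-indexed-by-cgd} across the embedding of Section~\ref{sec:another-realisation}: cylindrical growth diagrams on $\II_{\tilde k}$ with $\gamma_{\bar m(i,i+1)}=\lambda_i$ are precisely those indexing associahedra that meet the component $\mathcal{Y}\cong \Ss(\lamb)\times\prod_i\Ss(\sq^{\abs{\lambda_i}},\lambda_i^\cc)$, and reduction modulo $m$---whose surjectivity onto $\decgd{\lamb}$ is supplied by the first part---then descends to the desired bijection, giving $\abs{\decgd{\lamb}}=c^\Lambda_{\lamb}$.
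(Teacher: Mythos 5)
The paper itself does not prove this proposition; it is quoted directly as Speyer's Proposition~8.1 and used as a black box, so there is no in-paper argument to compare against. Judged on its own merits, the first half of your proposal (lifting a decgd to a cylindrical growth diagram by fixing a staircase path through the vertices \( \bar m(i,j) \), choosing representatives for the dual equivalence classes on its edges, and invoking Remark~\ref{rem:path-defines-uniquely}) is sound, and your propagation argument — that both the reduction of \( \tilde\gamma \) and the original \( \gamma \) satisfy the shuffle axiom and agree along the staircase, so must agree everywhere — is the right idea, though you should note explicitly that the shuffle operation is invertible (a rectangular growth diagram is determined by \emph{any} two adjacent edges, not only the northern and western), since the induction must propagate both toward and away from the diagonal.

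The weak point is the count. Your route through the geometry of \( \Ss(\lamb)(\RR) \) is essentially a restatement of Theorem~\ref{thm:labelling-of-fibs} (Speyer's Theorem~8.2), which in this exposition comes \emph{after} Proposition~\ref{prp:number-of-decgds} and whose proof plausibly uses the cardinality you are trying to establish; the phrase ``descends to the desired bijection'' hides precisely the injectivity/well-definedness content of that theorem, and the claim that cylindrical growth diagrams with \( \tilde\gamma_{\bar m(i,i+1)}=\lambda_i \) are ``precisely those indexing associahedra meeting \( \mathcal{Y} \)'' is asserted without argument. A cleaner and non-circular route is purely combinatorial and you already have the ingredients. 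First part gives you that reduction modulo \( m \) is surjective onto \( \decgd{\lamb} \), and Remark~\ref{rem:path-defines-uniquely} together with Theorem~\ref{thm:props-of-dual-equiv}(\ref{item:normal-all-de}) show that the fibre over any decgd has exactly \( \prod_i f^{\lambda_i} \) elements (one choice of representative per straight-shape edge). On the other hand, a cylindrical growth diagram on \( \II_{\tilde k} \) with \( \tilde\gamma_{\bar m(i,i+1)}=\lambda_i \) is the same as a standard \( \Lambda \)-tableau whose \( i \)-th block rectifies to shape \( \lambda_i \); grouping by the coarse chain \( \mu_0\subset\cdots\subset\mu_k \) and using that the number of skew standard tableaux of shape \( \mu_i\setminus\mu_{i-1} \) rectifying to a straight shape \( \lambda_i \) is \( c^{\mu_i}_{\mu_{i-1}\lambda_i}\,f^{\lambda_i} \) gives the total \( c^\Lambda_{\lamb}\prod_i f^{\lambda_i} \). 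Dividing by the fibre size yields \( \abs{\decgd{\lamb}}=c^\Lambda_{\lamb} \) with no appeal to the geometry of \( \Ss(\lamb) \), and I expect this is closer to Speyer's own argument.
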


\subsubsection{The CW-structure and wall crossing for  general \( \lamb \)}
\label{sec:general-cw-structure}

\begin{Theorem}[{\cite[Theorem~8.2]{Speyer:2014gg}}]
  \label{thm:labelling-of-fibs}
The maximal faces of the CW-structure on \( \Ss(\lamb) \) are labelled by pairs \( (s,\gamma) \) of a circular ordering \( s \) and a decgd \( \gamma \) of shape \( (\lambda_{s(1)},\lambda_{s(2)}\ldots,\lambda_{s(k)}) \).
\end{Theorem}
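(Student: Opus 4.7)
The plan is to leverage the inclusion from Section~\ref{sec:another-realisation}, realizing $\Ss(\lamb)(\RR)$ as a CW-subcomplex of $\Ss(\sq^{\tilde{k}})(\RR)$ with $\tilde{k} = \abs{\lamb}$, and to push the fundamental labelling of Proposition~\ref{prp:fibre-indexed-by-cgd} down to the general case via the reduction map of Definition~\ref{def:reduction-mod-m}.

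First, I would fix a circular ordering $s$ of $[k]$ and construct the ``exploded'' circular ordering $\tilde{s}$ of $[\tilde{k}]$ obtained by replacing each label $s(i)$ with the block of $\abs{\lambda_{s(i)}}$ consecutive labels in their natural order. Under the clutching embedding of Section~\ref{sec:another-realisation}, the associahedron $\associahedron_s \subset \M{k}(\RR)$ is a boundary face of $\associahedron_{\tilde{s}} \subset \M{\tilde{k}}(\RR)$. Consequently, the maximal cells of $\Ss(\lamb)(\RR)$ above $\associahedron_s$ arise as limits of those maximal cells of $\Ss(\sq^{\tilde{k}})(\RR)$ above $\associahedron_{\tilde{s}}$ that lie in the component $\mathcal{Y}$. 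By Proposition~\ref{prp:fibre-indexed-by-cgd} these latter cells are labelled by cylindrical growth diagrams $\tilde{\gamma}$ on $\II_{\tilde{k}}$; with $m(i) = \abs{\lambda_{s(i)}}$, membership in $\mathcal{Y}$ forces the block condition $\tilde{\gamma}_{\bar{m}(i,i+1)} = \lambda_{s(i)}$, which is precisely the shape condition of Definition~\ref{def:decgd}.

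Next, I would show that two such diagrams $\tilde{\gamma}$ and $\tilde{\gamma}'$ limit to the same cell of $\Ss(\lamb)(\RR)$ exactly when their reductions modulo $m$ agree. Internal reorderings of the $i^{\text{th}}$ block of collided marked points correspond to wall-crossings $\associahedron_{\tilde{s},pq}$ with $[p,q]$ contained entirely in that block. By Proposition~\ref{prp:crossing-walls} these crossings act on $\tilde{\gamma}$ by reflecting a rectangular subdiagram; interpreting this reflection through Proposition~\ref{prp:grwoth-diagams-jdt} and Theorem~\ref{thm:props-of-dual-equiv}\ref{item:words-de-if-Q}, it is exactly the shuffling relation of Proposition~\ref{prp:shuffle-dual-equiv}, so the data preserved on each edge of the reduced diagram is precisely its dual equivalence class. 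Combined with Lemma~\ref{lem:sets-adapted} and Proposition~\ref{prp:reduction-mod-m}, this identifies the equivalence classes of such $\tilde{\gamma}$'s with the decgds $\gamma$ on $\II_k$ of shape $(\lambda_{s(1)}, \ldots, \lambda_{s(k)})$.

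The count closes the argument: Proposition~\ref{prp:number-of-decgds} gives $c^{\Lambda}_{\lamb}$ decgds of this shape, which matches the degree of $\Ss(\lamb)(\RR) \to \M{k}(\RR)$ from Theorem~\ref{thm:real-points-labelling}, so each decgd corresponds to exactly one maximal cell. The main obstacle I anticipate is the wall-crossing step: I must verify both that moves internal to a block act on $\tilde{\gamma}$ as shuffles in the sense of Proposition~\ref{prp:shuffle-dual-equiv}, and, conversely, that no maximal cells of $\Ss(\sq^{\tilde{k}})$ outside this shuffling equivalence become identified under the clutching limit into $\Ss(\lamb)$.
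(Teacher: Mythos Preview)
Your proposal is correct and follows essentially the same approach as the paper. The paper itself does not give a full proof---it explicitly defers to \cite{Speyer:2014gg} and only sketches how the embedding of Section~\ref{sec:another-realisation} places $\associahedron \times \prod_i \Ss(\Box^{|\lambda_i|},\lambda_i^\cc)(\RR)$ inside some $(\tilde{k}-3)$-associahedron $\tilde{\associahedron}$ of $\Ss(\Box^{\tilde{k}})(\RR)$, and then obtains $(s,\gamma)$ from the labelling $(\tilde{s},\tilde{\gamma})$ of $\tilde{\associahedron}$ by reduction modulo $m$. Your outline traces the same path but supplies two details the paper's sketch omits: the wall-crossing argument showing that different lifts $\tilde{\gamma}$ (arising from different choices in the product factors, i.e.\ internal block reorderings) have the same reduction modulo $m$, and the counting check via Proposition~\ref{prp:number-of-decgds}. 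These are exactly the points one must verify to make the sketch rigorous, and your identification of the block-internal wall-crossings with the shuffling relation is the correct mechanism.
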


We leave it to the reader to consult~\cite{Speyer:2014gg} for a rigorous proof of this fact however we will comment on how the results of Section~\ref{sec:another-realisation} allow us to make this statement and produce the dual equivalence classes. We use the notation of Section~\ref{sec:another-realisation}. Let \( \associahedron \) be a \( (k-3) \)-associahedron in \( \Ss(\lamb) \). We consider the embedding
\begin{equation*}
  \begin{tikzcd}
      \associahedron \times \prod_{i=1}^k \Ss(\Box^{\left| \lambda_i \right|},\lambda_i^C)(\RR) \arrow[hook]{r} & \Ss(\Box^{\tilde{k}})(\RR).
  \end{tikzcd}
\end{equation*}
Since this is an embedding of CW-complexes \(  \associahedron \times \prod_{i=1}^k \Ss(\Box^{\left| \lambda_i \right|},\lambda_i^C)(\RR) \) must be contained in some \( (\tilde{k}-3) \)-associahedron \( \tilde{\associahedron} \) of \( \Ss(\Box^{\tilde{k}})(\RR) \). Let \( (\tilde{s},\tilde{\gamma}) \) be the circular order (of \( \tilde{k} \)) and cylindrical growth diagram labelling \( \tilde{\associahedron} \) as per Proposition~\ref{prp:fibre-indexed-by-cgd}. Let \( \tau \) be the unique order preserving bijection
\begin{equation*}
  \setc{\tilde{s}(k^{i}_1)}{1 \le i \le k} \longrightarrow \set{1,2,\ldots,k}, \quad \text{where } k^i_1 = 1+ \sum_{j=1}^{i-1} \abs{\lambda_{j}},
\end{equation*}
then \( s(i) = \tau \circ \tilde{s}(k^{i}_1) \). Let \( m(i) = \abs{\lambda_{s(i)}} \), then \( \gamma \) is the reduction of \( \tilde{\gamma} \) modulo \( m \).

\begin{Proposition}
  \label{prp:crossing-walls-in-general}
Suppose we have two neighbouring associahedra of \( \Ss(\lamb) \) labelled by the pairs \( (s,\gamma) \) and \( (\hat{s},\hat{\gamma}) \) where \( \hat{s} \) is obtained from \( s \) by reversing \( s(p),s(p+1),\ldots,s(q-1) \). The dual equivalence cylindrical growth diagram \( \hat{\gamma} \) is given by
\begin{equation}
  \label{eq:new-decgd}
\hat{\gamma}_{ij} =
\begin{cases}
\gamma_{ij} &\text{if } [i,j]\cap [p,q] = \emptyset \text{ or } [p,q] \subseteq [i,j], \\
\gamma_{(p+q-j)(p+q-i)} &\text{if } [i,j] \subseteq [p,q],
\end{cases}
\end{equation}
with the dual equivalence classes \( \alpha_{ij} \) and \( \beta_{ij} \) being similarly flipped inside the triangle south-west of the node \( (p,q) \). We have the same picture of flipping triangles as in Figure~\ref{fig:cross-walls-flip}.
\end{Proposition}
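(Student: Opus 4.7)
The strategy is to reduce to the fundamental case already established as Proposition~\ref{prp:crossing-walls}, via the embedding of Section~\ref{sec:another-realisation}. Set $\tilde{k} = \abs{\lamb}$ and $m(i) = \abs{\lambda_{s(i)}}$, $m'(i) = \abs{\lambda_{\hat{s}(i)}}$. The associahedra $\associahedron$ and $\hat{\associahedron}$ sit inside associahedra $\tilde{\associahedron}$ and $\hat{\tilde{\associahedron}}$ of $\Ss(\sq^{\tilde{k}})(\RR)$, labelled by pairs $(\tilde{s}, \tilde{\gamma})$ and $(\hat{\tilde{s}}, \hat{\tilde{\gamma}})$, and by the description at the end of Section~\ref{sec:general-cw-structure}, $\gamma$ is the reduction of $\tilde{\gamma}$ modulo $m$ and $\hat{\gamma}$ the reduction of $\hat{\tilde{\gamma}}$ modulo $m'$. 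The wall $\associahedron_{pq}$ of $\associahedron$ corresponds under the embedding to the wall $\tilde{\associahedron}_{p'q'}$ of $\tilde{\associahedron}$ with $p' = \hat{m}(p)$ and $q' = \hat{m}(q)$, so that $\hat{\tilde{\gamma}}$ is determined from $\tilde{\gamma}$ by Proposition~\ref{prp:crossing-walls} applied to the interval $[p', q']$.

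The next step is to unwind the reduction modulo $m'$. A short calculation shows $\hat{m}'(i) = \hat{m}(i)$ for $i \notin [p,q]$, while for $i \in [p,q]$,
\begin{equation*}
\hat{m}'(i) \;=\; p' + q' - \hat{m}(p+q-i),
\end{equation*}
since $m'$ is obtained from $m$ by reversing values on $[p,q-1]$. Specialising $\hat{\tilde{\gamma}}_{IJ}$ at $(I,J)=\bar{m}'(i,j)$ and substituting this identity into the three cases of (\ref{eq:new-cgd}), each case of the conclusion (\ref{eq:new-decgd}) is immediate: the disjoint and containing cases give $\hat{\gamma}_{ij} = \tilde{\gamma}_{\bar{m}(i,j)} = \gamma_{ij}$, while the contained case gives $\hat{\gamma}_{ij} = \tilde{\gamma}_{\bar{m}(p+q-j,\,p+q-i)} = \gamma_{(p+q-j)(p+q-i)}$.

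For the dual equivalence classes, by definition of reduction modulo $m'$ the class $\hat{\alpha}_{ij}$ (respectively $\hat{\beta}_{ij}$) is represented by the horizontal (respectively vertical) path in $\hat{\tilde{\gamma}}$ joining $\bar{m}'(i,j)$ to $\bar{m}'(i-1,j)$ (respectively $\bar{m}'(i,j+1)$). Outside the triangle $[i,j]\subseteq[p,q]$ these paths coincide with the ones defining $\alpha_{ij}$, $\beta_{ij}$, and inside the triangle Proposition~\ref{prp:crossing-walls} reflects them across the SW--NE diagonal through $(p',q')$, interchanging horizontal and vertical directions and sending the endpoint $\bar{m}'(i,j)$ to $\bar{m}(p+q-j,\,p+q-i)$. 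Tracking endpoints under this reflection identifies $\hat{\alpha}_{ij}$ with $\beta_{(p+q-j)(p+q-i)}$ and $\hat{\beta}_{ij}$ with $\alpha_{(p+q-j)(p+q-i)}$, which is exactly the meaning of the dual equivalence classes being ``similarly flipped''. The main obstacle is bookkeeping: carefully tracking how the two distinct interval functions $m$ and $m'$ interact with the reflection, in particular that the reflection in $[p',q']$ lands on nodes adapted to $m$ rather than $m'$; once the identity $\hat{m}'(i) = p' + q' - \hat{m}(p+q-i)$ is in place everything else is routine.
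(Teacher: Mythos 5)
Your proof is correct. The paper states Proposition~\ref{prp:crossing-walls-in-general} without an explicit proof (it follows the discussion after Theorem~\ref{thm:labelling-of-fibs} sketching the embedding into \( \Ss(\sq^{\tilde{k}}) \)), and your argument fills exactly that gap, taking the route the paper's surrounding text implicitly points to.

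The key steps all check out. Identifying the wall \( \associahedron_{pq} \) with \( \tilde{\associahedron}_{p'q'} \) where \( p'=\hat m(p) \), \( q'=\hat m(q) \) is right, since reversing the blocks indexed by \( s(p),\ldots,s(q-1) \) reverses the \( \tilde s \)-positions from \( \hat m(p) \) through \( \hat m(q)-1 \). The identity
\begin{equation*}
\hat m'(i) = p'+q'-\hat m(p+q-i) \quad \text{for } i\in[p,q]
\end{equation*}
follows from \( m'(k)=m(p+q-1-k) \) on \( [p,q-1] \) by the change of index \( l=p+q-1-k \), and together with \( \hat m'(i)=\hat m(i) \) for \( i\notin(p,q) \) it makes the three cases of~(\ref{eq:new-decgd}) drop out of~(\ref{eq:new-cgd}) as you claim. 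Your argument for the dual equivalence classes is also correct: the reflection \( (I,J)\mapsto(p'+q'-J,p'+q'-I) \) sends a horizontal step \( (I,J)\to(I-1,J) \) to a vertical step, sends \( \bar m'(i,j) \) to \( \bar m(p+q-j,\,p+q-i) \), and the sequence of partitions along the defining path is preserved, so \( \hat\alpha_{ij}=\beta_{(p+q-j)(p+q-i)} \) and \( \hat\beta_{ij}=\alpha_{(p+q-j)(p+q-i)} \). One minor point worth stating explicitly for completeness: since \( \hat m \) is strictly increasing, the image \( [\hat m'(i),\hat m'(j)] \) lands in the corresponding case of~(\ref{eq:new-cgd}) (disjoint from, containing, or contained in \( [p',q'] \)) precisely when \( [i,j] \) is in the corresponding relation to \( [p,q] \), so Proposition~\ref{prp:crossing-walls} applies at all the nodes you read off rather than having to be invoked recursively.
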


\subsection{The $ S_k $-action}
\label{sec:Sk-action}

Given a permutation \( \sigma \in S_k \) and a subset \( A \subset [ k ] \) we use the notation
\begin{equation*}
 \sigma A \deq \setc{\sigma(a)}{a \in A}.
\end{equation*}
In this way \( \sigma \) defines a permutation of the set of three element sets \( A \subset [k] \). The permutation \( \sigma \) also induces an isomorphism \( C_A \rightarrow C_{\sigma A} \) (sending marked points to marked points) and hence an isomorphism \( \Gr(d,r)_A \rightarrow \Gr(d,r)_{\sigma A} \). In order to keep our notation tidy we will use \( \sigma \) to denote all of these isomorphisms, the context should make it clear which we a referring to. Since the constructions were functorial, diagrams of the type
\begin{equation}\label{eq:sigma-commutes-phi}
  \begin{tikzcd}[column sep=tiny]
    \Gr(r,d)_A \arrow{rr}{\sigma} && \Gr(r,d)_{\sigma A} \\
    & \Gr(r,d)_{\PP^1} \arrow{ul}{\phi_A} \arrow{ur}[swap]{\phi_{\sigma A}}
  \end{tikzcd}
\end{equation}
commute. Let \( S_k \) act on \( \M{k} \) by permuting marked points. The above discussion means we have an action of \( S_k \) on the trivial family
\begin{equation*}
  \M{k} \times \prod_{A \in \binom{[\, k\,]}{3}} \Gr(d,r)_A.
\end{equation*}

\begin{Proposition}
  \label{prp:Sk-action}
The variety \( \Gg(d,r) \) is stable under the action of \( S_k \) and the variety \( \Ss(\lamb) \) is sent isomorphically onto \( \Ss(\sigma\cdot\lamb) \). In particular the stabiliser of \( \lamb \) in \( S_k \) acts on \( \Ss(\lamb) \).
\end{Proposition}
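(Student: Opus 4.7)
The plan is to verify $S_k$-stability of $\Gg(r,d)$ first on the dense open stratum $\oM{k}\times\Gr(r,d)$, pass to the closure, and then check that $\sigma$ carries the Schubert conditions for $\lamb$ to those for $\sigma\cdot\lamb$ by a direct reindexing using Lemma~\ref{lem:isomorphism-of-schuberts}.

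The key initial identity I would establish is the compatibility
\[
\phi_A(\sigma\cdot C) \;=\; \sigma\circ \phi_{\sigma^{-1}A}(C),
\]
where the outer $\sigma$ is the isomorphism $C_{\sigma^{-1}A}\to C_A$ appearing in~(\ref{eq:sigma-commutes-phi}). Both sides are isomorphisms $\PP^1\to C_A$ sending, for each $a\in A$, the $a$-th marked point of $\sigma\cdot C$ (namely $z_{\sigma^{-1}(a)}(C)$) to the $a$-th marked point of $C_A$, and therefore agree by uniqueness. Combined with the rule $(\sigma\cdot(x_A))_B = \sigma(x_{\sigma^{-1}B})$ for the diagonal action on $\prod_A\Gr(r,d)_A$, this shows that $\sigma$ carries the image of $(C,X)$ to the image of $(\sigma\cdot C, X)$. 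Hence $\oM{k}\times\Gr(r,d)$ is $S_k$-stable, and because the ambient action is algebraic (in particular closed), the closure $\Gg(r,d)$ is $S_k$-stable too. Moreover, under the identification of Theorem~\ref{thm:props-of-G-and-S}(ii), the induced action on $\oM{k}\times\Gr(r,d)$ is just $(C,X)\mapsto(\sigma\cdot C, X)$: the Grassmannian coordinate is unchanged.

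For the Schubert conditions, Lemma~\ref{lem:isomorphism-of-schuberts} applied to $\sigma\map{C_A}{C_{\sigma A}}$ yields $\sigma\bigl(\Omega(\lambda_a;a)_A\bigr) = \Omega(\lambda_a;\sigma(a))_{\sigma A}$. Applying $\sigma$ to the description $\Ss(\lamb)=\Gg(r,d)\cap\bigcap_{A}\bigcap_{a\in A}\Omega(\lambda_a;a)_A$ and then reindexing the intersection by $B=\sigma A$ and $b=\sigma(a)$ rewrites it as an intersection over $B$ and $b\in B$ of $\Omega(\lambda_{\sigma^{-1}(b)};b)_{B} = \Omega((\sigma\cdot\lamb)_b;b)_{B}$, which is exactly the defining expression for $\Ss(\sigma\cdot\lamb)$. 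The stabiliser statement follows immediately.

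I anticipate no serious obstacle: the argument is essentially a careful tracking of the $S_k$-action on indexing sets, underpinned by uniqueness of the canonical identifications $\phi_A$ and by the functoriality of osculating Schubert varieties under Grassmannian isomorphisms. The only point requiring discipline is fixing the convention $(\sigma\cdot(x_A))_B=\sigma(x_{\sigma^{-1}B})$ at the outset and applying it consistently, so that the reindexings $B=\sigma A$, $b=\sigma(a)$ in the last step remain unambiguous.
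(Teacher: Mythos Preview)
Your proposal is correct and follows essentially the same approach as the paper: establish $S_k$-stability on the open stratum via the compatibility of the maps $\phi_A$ (which the paper encodes as the commutativity of diagram~(\ref{eq:sigma-commutes-phi}), and which you unwind as $\phi_A(\sigma\cdot C)=\sigma\circ\phi_{\sigma^{-1}A}(C)$), pass to the closure, and then reindex the Schubert conditions using Lemma~\ref{lem:isomorphism-of-schuberts}. Your write-up is in fact slightly more explicit than the paper's about the compatibility identity and the reindexing $B=\sigma A$, $b=\sigma(a)$, but the underlying argument is the same.
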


\begin{proof}
Recall \( \Gg(r,d) \) is defined as the closure of the image of the embedding
\begin{equation*}
  \begin{tikzcd}[row sep = 0pt]
      \oM{k} \times \Gr(r,d) \arrow[hook]{r} & \M{k} \times \prod_A \Gr(r,d)_A \\
      (C,X) \arrow[mapsto]{r} & (C,\phi_A(C,X)).
  \end{tikzcd}
\end{equation*}
By the commutativity of~(\ref{eq:sigma-commutes-phi}), the \( S_k \)-action preserves the image of \( \Gr(r,d)_{\PP^1} \). Thus the action of \( S_k \) also preserves the closure.

Let \( A \subseteq [n] \) be a three element set. By Lemma~\ref{lem:isomorphism-of-schuberts}, for any \( a \in A \) the isomorphism \( \sigma \) sends \( \Omega(\lambda_{a},a)_A \) to the Schubert variety \( \Omega(\lambda_{a},\sigma(a))_{\sigma A} \). This means
\begin{align*}
  \sigma \Ss(\lamb) &= \sigma \left( \Gg(r,d) \cap \bigcap_{a \in A} \Omega(\lambda_{a},a)_A \right) \\
&= \Gg(r,d) \cap \bigcap_{a \in A} \Omega(\lambda_{a},\sigma(t))_{\sigma A} \\
&= \Gg(r,d) \cap \bigcap_{a \in A} \Omega(\lambda_{\sigma^{-1}(a)},a)_A. \qedhere
\end{align*}
\end{proof}

\subsection{Acting on the $ \nu $ labelling}
\label{sec:acting-nu}

We will also need some finer information on exactly what the orbits in \( \Ss(\lamb) \) look like. This will be helpful when it comes to determining what the \( S_k \)-action does to the cylindrical growth diagram indexing a face of \( \Ss(\RR) \).

Consider the fibre \( \Gg(r,d)(C) \) for some stable curve \( C \in \M{k} \). Using the description of the fibre from Theorem~\ref{thm:fibre-theorem}, we have
\begin{equation*}
\Gg(r,d)(C) = \bigcup_{\nu \in \ttN_C} \prod_{i} \bigcap_{d \in D_i} \Omega(\nu(C_i,d),d)_{C_i},
\end{equation*}
where \( \ttN_C \) is the set of node labellings for \( C \), \( C_i \) the irreducible components of \( C \) and \( D_i \) the set of nodes on the component \( C_i \). The action of \( S_k \) on \( \M{k} \) permutes marked points, thus \( C \) and its image \( \sigma C \) are the same curve simply with different marked points. That is, there is an isomorphism \( C \rightarrow \sigma C \) which we can take to be the identity morphism, which sends the point marked by \( a \) to the point marked by \( \sigma(a) \). In this way we identify the irreducible components \( C_i \) and \( \sigma C_i \) and if \( d \) is a node in \( C_i \), we also have a node \( d \in \sigma C_i \). Using this identification, a node labelling \( \nu \in \ttN \) naturally determines a node labelling in \( \ttN_{\sigma C} \), which we also denote by \( \nu \).

\begin{Lemma}
  \label{lem:nu-component}
The \( S_k \) action on \( \Gg(d,r) \) ``preserves the \( \nu \)-component of the fibre''. More precisely, if we fix a \( \nu \in \ttN_C \) the image of
\begin{equation*}
\prod_{i} \bigcap_{d \in D_i} \Omega(\nu(C_i,d),d)_{C_i}
\end{equation*}
under the action of \( \sigma \) is
\begin{equation*}
\prod_{i} \bigcap_{d \in \sigma D_i} \Omega(\nu(\sigma C_i,d),d)_{\sigma C_i}.
\end{equation*}
\end{Lemma}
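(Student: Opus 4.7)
The plan is to reduce the lemma to a compatibility between $\sigma$ and the embeddings $\Gr(r,d)_{C_i} \hookrightarrow \prod_{v(A) = C_i} \Gr(r,d)_A$ of Section~\ref{sec:fibre}. The starting observation, already implicit in the setup, is that $\sigma$ acts on $\M{k}$ only by permuting marked points, so $C$ and $\sigma C$ are literally the same stable curve as schemes; their irreducible components $C_i$ and nodes $D_i$ are intrinsic data and are identified canonically. A node labelling $\nu \in \ttN_C$ is therefore also a node labelling on $\sigma C$ without modification, and once equivariance is established it will follow that the two products of Schubert intersections in the lemma are in fact the same subvariety of $\prod_i \Gr(r,d)_{C_i}$.

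Next, I would work out the action explicitly. First, I would verify that $v(A) = C_i$ in $C$ if and only if $v(\sigma A) = C_i$ in $\sigma C$, using the observation that the projection onto $C_i$ of the marked point labelled $\sigma(a)$ in $\sigma C$ is, by definition, the same point of $C_i$ as the projection of $a$ in $C$. From this and the defining property of $\phi_{i,A}(C)$ (it sends the projection of $a \in A$ onto $C_i$ to the point of $C_A$ marked by $a$) the square
\begin{equation*}
\begin{tikzcd}
C_i \arrow{r}{\phi_{i,A}(C)} \arrow[equal]{d} & C_A \arrow{d}{\sigma} \\
C_i \arrow{r}{\phi_{i,\sigma A}(\sigma C)} & C_{\sigma A}
\end{tikzcd}
\end{equation*}
commutes, since both composites send the projection of $a$ onto $C_i$ to the point marked by $\sigma(a)$ in $C_{\sigma A}$.

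Passing to Grassmannians via Lemma~\ref{lem:commutes-with-embedding} and taking the product over all three-element subsets $A$ with $v(A) = C_i$, this square says that the restriction of the $S_k$-action on $\prod_A \Gr(r,d)_A$ to the embedded $\Gr(r,d)_{C_i}$ is, under the identification $C_i = (\sigma C)_i$, the identity map on $\Gr(r,d)_{C_i}$. Since each Schubert variety $\Omega(\nu(C_i,d), d)_{C_i}$ is defined intrinsically from the component $C_i$, the node $d \in D_i$, and the partition $\nu(C_i,d)$, none of which are altered by $\sigma$, the subvariety $\prod_i \bigcap_{d \in D_i} \Omega(\nu(C_i,d), d)_{C_i}$ is mapped to itself, which is the conclusion of the lemma. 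I expect the main obstacle to be bookkeeping rather than substance: one has to keep the various identifications among curves, components, nodes, and the Grassmannians $\Gr(r,d)_A$ mutually consistent. Once these are pinned down, the lemma reduces to the functoriality already encoded in diagram~(\ref{eq:sigma-commutes-phi}) together with Lemma~\ref{lem:commutes-with-embedding}.
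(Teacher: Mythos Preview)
Your proposal is correct and follows essentially the same approach as the paper. The paper's proof is a two-line appeal to the commutativity of diagram~(\ref{eq:sigma-commutes-phi}) (yielding $\Gr(r,d)_{C_i} \to \Gr(r,d)_{\sigma C_i}$) followed by Lemma~\ref{lem:isomorphism-of-schuberts} for the Schubert varieties; you have unpacked the first step by explicitly writing down and verifying the commutative square for the maps $\phi_{i,A}$, and then argued the Schubert statement intrinsically (the induced map on $\Gr(r,d)_{C_i}$ being the identity under the identification) rather than citing Lemma~\ref{lem:isomorphism-of-schuberts}. Both routes amount to the same functoriality check, and your version is a faithful expansion of what the paper leaves implicit.
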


\begin{proof}
By the commutativity of~(\ref{eq:sigma-commutes-phi}), the Grassmannian \( \Gr(r,d)_{C_i} \) is sent isomorphically onto \( \Gr(r,d)_{\sigma C_i} \). Lemma~\ref{lem:isomorphism-of-schuberts} then tells us that \( \Omega(\nu(C_i,d),d)_{C_i} \) is mapped onto \( \Omega(\nu(\sigma C_i,d),d)_{\sigma C_i} \).
\end{proof}

\subsection{The $ S_k $-action on $ \Ss(\lamb)(\RR) $}
\label{sec:the-action-on-real}

We can now describe how the \( S_k \)-action effects the labelling of the fibre by cylindrical growth diagrams.

\begin{Proposition}
  \label{prp:Action-on-cgds}
If \( \associahedron \) is an associahedron in \( \Ss(\lamb)(\RR) \) labelled by \( (s,\gamma) \) then \( \sigma \associahedron \) is the associahedron labelled by \( (\sigma \cdot s, \gamma) \).
\end{Proposition}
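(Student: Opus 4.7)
The circular-ordering part is immediate from Section~\ref{sec:tilling-Mk-associahedra}: the $S_k$-action on $\M{k}(\RR)$ permutes marked points, so the generic circular ordering on curves in $\sigma\associahedron$ is obtained from that of $\associahedron$ by applying $\sigma$ entry-wise. Hence $\sigma \associahedron$ lies over $\associahedron_{\sigma \cdot s}$ and it remains to show that the (dual equivalence) cylindrical growth diagram labelling $\sigma \associahedron$ is still $\gamma$.

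First I would treat the fundamental case $\lamb = (\sq^k)$. Fix a non-adjacent pair $(i,j)$ and pick a generic $x \in \associahedron_{ij} \subset \associahedron$ lying over a stable curve $C = C_1 \cup C_2$ whose unique node $d$ separates the labels $s(i), \ldots, s(j-1)$ (on $C_2$) from the remaining labels (on $C_1$). By definition (Section~\ref{sec:spey-labell-fibre}), $\gamma_{ij} = \nu(C_1, d)$ where $\nu \in \ttN_C$ is the node labelling indexing the component of $\Gg(r,d)(C)$ containing $x$. Under $\sigma$, the point $x$ is sent to a point $\sigma \cdot x$ over $\sigma C$, which is the same underlying curve with marked point $a$ relabelled by $\sigma(a)$; in particular $C_2$ now carries the labels $(\sigma \cdot s)(i), \ldots, (\sigma \cdot s)(j-1)$, so $\sigma \cdot x$ belongs to the facet of $\sigma \associahedron$ indexed by the same pair $(i,j)$ in the $(\sigma \cdot s)$-framing. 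Lemma~\ref{lem:nu-component} says that the $\nu$-component of the fibre is preserved by the $S_k$-action, and hence the partition attached to the same node $d$ on the same component $C_1$ is still $\nu(C_1, d) = \gamma_{ij}$.

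For general $\lamb$, I would exploit the description from Section~\ref{sec:another-realisation}: the product $\associahedron \times \prod_{i} \Ss(\sq^{|\lambda_i|}, \lambda_i^\cc)(\RR)$ sits inside some fundamental associahedron $\tilde{\associahedron} \subset \Ss(\sq^{\tilde k})(\RR)$ carrying a cylindrical growth diagram $\tilde{\gamma}$, and by the construction of Section~\ref{sec:general-cw-structure} the decgd $\gamma$ of $\associahedron$ is the reduction of $\tilde{\gamma}$ modulo $m(i) = |\lambda_{s(i)}|$; this reduction records both the partitions $\gamma_{ij}$ and the dual equivalence classes on the internal edges, all of which are determined by $\tilde{\gamma}$. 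Since $\sigma$ stabilises $\lamb$, it lifts to a block-permutation $\tilde{\sigma} \in S_{\tilde k}$ for which the embedding of Section~\ref{sec:another-realisation} is equivariant. Applying the fundamental case to $\tilde{\associahedron}$ shows $\tilde{\sigma}\tilde{\associahedron}$ is labelled by $(\tilde{\sigma} \cdot \tilde{s}, \tilde{\gamma})$; reducing modulo $m$ then identifies $\sigma \associahedron$ with the associahedron labelled by $(\sigma \cdot s, \gamma)$.

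The main subtlety is bookkeeping in the non-fundamental case: one must verify that \emph{both} the partitions $\gamma_{ij}$ and the dual equivalence classes $\alpha_{ij}, \beta_{ij}$ on internal edges are preserved, not just the partition data. Passing through the fundamental case resolves this cleanly, since both kinds of data are defined by paths inside $\tilde{\gamma}$, which is manifestly unchanged by the $\tilde{\sigma}$-action.
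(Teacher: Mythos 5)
Your argument follows essentially the same two-step approach as the paper: the fundamental case $\lamb=(\sq^k)$ is handled via Lemma~\ref{lem:nu-component}, showing the $\nu$-labelling of facets is preserved, and the general case then lifts $\sigma$ to a block permutation $\tilde{\sigma}\in S_{\tilde{k}}$, applies the fundamental case to $\tilde{\associahedron}$, and recovers the decgd by reduction modulo $m$. The only slip is the parenthetical ``since $\sigma$ stabilises $\lamb$'' --- the proposition holds for arbitrary $\sigma\in S_k$ (with $\sigma\associahedron$ then lying in $\Ss(\sigma\cdot\lamb)(\RR)$, whose decgd shape $((\sigma\cdot\lambda)_{(\sigma\cdot s)(1)},\ldots)$ indeed matches that of $\gamma$), and the block-permutation lift $\tilde{\sigma}$ exists regardless, so that hypothesis is neither needed nor assumed.
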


\begin{proof}
Let \( \hat{\associahedron} = \sigma \associahedron \). We first restrict to the fundamental case when \( \lamb = (\sq)^k \). It is clear by the action of \( S_k \) on \( \M{k}(\RR) \) that \( \sigma \cdot s \) is the circular ordering labelling the associahedron \( \hat{\associahedron} \). Recall the cylindrical growth diagram \( \hat{\gamma} \) of \( \hat{\associahedron} \) is determined by considering the \( \nu \)-labelling of a point on each of its facets. As shown in Lemma~\ref{lem:nu-component} this \( \nu \)-labelling is preserved, so \( \hat{\gamma} = \gamma \).

Now consider the general case for arbitrary \( \lamb \). We use the notation from Section~\ref{sec:another-realisation}. Choose a permutation \( \tilde{\sigma} \in S_{\tilde{k}} \) such that 
\begin{align*}
\associahedron &\times \prod_{i=1}^k \Ss(\Box^{\left| \lambda_i \right|},\lambda_i^C)(\RR) \subset \tilde{\associahedron} \\
\intertext{is sent to}
\sigma \cdot\associahedron &\times \prod_{i=1}^k \Ss(\Box^{\left| \lambda_i \right|},\lambda_i^C)(\RR) \subset \tilde{\sigma} \cdot \tilde{\associahedron}.
\end{align*}
Here \( \tilde{\associahedron} \) is a choice of associahedron in \( \Ss(\Box^{\tilde{k}})(\RR) \). If \( \tilde{\gamma} \) is the cylindrical growth  diagram labelling \( \tilde{\associahedron} \), then the above shows \( \tilde{\gamma} \) is also the cylindrical growth diagram labelling \( \tilde{\sigma}\cdot \gamma \). But the decgd labelling \( \sigma \cdot \associahedron \) is by definition the reduction of this cylindrical growth diagram, which by assumption is \( \gamma \).
\end{proof}

\subsubsection{The equivariant monodromy}
\label{sec:equiv-monodr}

Let \( k = n+1 \), fix a basepoint \( C \in \oM{n+1}(\RR) \)  and consider the sequence of partitions \( (\lamb,\mu^\cc) \) where \( \lamb = (\lambda_1,\lambda_2,\ldots,\lambda_n) \) and \( \abs{\mu} = \abs{\lamb} \) (this condition implies \( \abs{\lamb} + \abs{\mu} = r(d-r) \)). Let \( S^{\lamb}_n \subseteq S_n \) be the subgroup fixing \( \lamb \). Proposition~\ref{prp:Sk-action} says we have an action of \( S_n \) on the disjoint union
\begin{equation*}
  \bigsqcup_{\sigma} \Ss(\sigma \cdot \lamb, \mu^\cc)(\RR),
\end{equation*}
where \( \sigma \) ranges over a set of representatives for the cosets \( S_n / S^{\lamb}_n \). The cactus group \( J_n \) acts on this family by equivariant monodromy.

Using Theorem~\ref{thm:labelling-of-fibs}, identify the fibre over \( C \) with \( \bigsqcup_\sigma\decgd{\sigma\cdot\lamb,\mu^\cc} \). We could potentially do this in a number of ways but fix one by choosing, for the associahedron containing \( C \), a representation \( s = (s(1),s(2),\ldots,s(n+1)) \) of the corresponding circular ordering. Let \( \gamma \) be the decgd labelling an associahedron lying over \( C \) and let \( \hat{\gamma} \) be the decgd labelling the associahedron obtained from \( \gamma \) by crossing the wall corresponding to flipping the marked points \( s(p), s(p+1), \ldots, s(q) \).

\begin{Corollary}
  \label{cor:monodromy-action}
The equivariant monodromy action of \( J_{n} \) on \( \bigsqcup_\sigma\decgd{\sigma\cdot\lamb,\mu^\cc} \) is given by \( s_{pq}\cdot\gamma = \hat{\gamma} \).
\end{Corollary}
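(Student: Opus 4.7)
The plan is to unwind the definition of the equivariant monodromy using the explicit description of the generators of $J_n$ given at the end of Section~\ref{sec:fundamental-group} together with the wall-crossing rule of Proposition~\ref{prp:crossing-walls-in-general} and the $S_n$-action on labels from Proposition~\ref{prp:Action-on-cgds}. Concretely, fix a basepoint $C \in \oM{n+1}(\RR)$ lying in the interior of an associahedron $\associahedron$ of $\Ss(\lamb,\mu^\cc)(\RR)$ whose circular ordering is represented by $s$, and recall that the generator $s_{pq} \in J_n = \pi_1^{S_n}(\M{n+1}(\RR))$ corresponds to the equivariant loop $(\alpha,\hat{s}_{pq})$, where $\alpha$ is (the homotopy class of) a path from $C$ to $\hat{s}_{pq}\cdot C$ crossing exactly one wall, namely the wall that reverses the labels $s(p),s(p+1),\ldots,s(q)$.

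First I would lift $\alpha$ to a path $\tilde{\alpha}$ starting at $\gamma$ in the covering $\Ss(\lamb,\mu^\cc)(\RR) \longrightarrow \M{n+1}(\RR)$ (or rather in the disjoint union $\bigsqcup_\sigma \Ss(\sigma\cdot\lamb,\mu^\cc)(\RR)$, to accommodate the $S_n$-action). Since $\alpha$ crosses exactly one wall and the associahedra in the cover map cellularly onto those below, the lift $\tilde{\alpha}$ starts in the associahedron labelled by $(s,\gamma)$ and ends in the unique neighbouring associahedron obtained by crossing the facet corresponding to the interval $[p,q]$. By Proposition~\ref{prp:crossing-walls-in-general} this neighbouring associahedron is labelled by $(\hat{s},\hat{\gamma})$, where $\hat{s}$ is the circular ordering obtained from $s$ by reversing the entries in positions $p,\ldots,q$. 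Thus $\tilde{\alpha}(1)$ lies in the fibre over $\hat{s}_{pq}\cdot C$ and corresponds, under the labelling of Theorem~\ref{thm:labelling-of-fibs}, to $(\hat{s},\hat{\gamma})$.

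Next, by the definition of the equivariant monodromy action (Definition~\ref{def:equivariant-monodromy}), we have $s_{pq}\cdot\gamma = \hat{s}_{pq}^{-1}\cdot\tilde{\alpha}(1)$, and since $\hat{s}_{pq}$ is an involution this equals $\hat{s}_{pq}\cdot\tilde{\alpha}(1)$. Proposition~\ref{prp:Action-on-cgds} tells us that the $S_n$-action on labels leaves the decgd component untouched and only permutes the circular ordering; applying this with $\sigma = \hat{s}_{pq}$ transforms the label $(\hat{s},\hat{\gamma})$ into $(\hat{s}_{pq}\cdot\hat{s},\hat{\gamma})$. A direct check shows $\hat{s}_{pq}\cdot\hat{s} = s$: indeed, $\hat{s}$ was obtained from $s$ by reversing positions $p,\ldots,q$, and premultiplying marked-point labels by $\hat{s}_{pq}$ undoes precisely this reversal (or rather identifies the resulting circular ordering with $s$ under the representative we have fixed). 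Hence the endpoint lies in the associahedron labelled by $(s,\hat{\gamma})$, giving $s_{pq}\cdot\gamma = \hat{\gamma}$.

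The only step that requires genuine care is the bookkeeping of circular orderings: one must track how the fixed representative $s$ of the circular ordering at $C$ interacts with the wall-crossing in Proposition~\ref{prp:crossing-walls-in-general} and with the relabelling from Proposition~\ref{prp:Action-on-cgds}, and verify that the composite brings us back to the \emph{same} representative we started with. Once this is set up correctly, the corollary follows by a direct application of the two propositions and the description of the generators $s_{pq}$ as equivariant loops crossing a single wall.
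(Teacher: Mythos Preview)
Your proposal is correct and follows essentially the same approach as the paper: lift the single-wall path using Proposition~\ref{prp:crossing-walls-in-general} to land in the associahedron labelled $(\hat{s},\hat{\gamma})$, then apply $\hat{s}_{pq}^{-1}$ and invoke Proposition~\ref{prp:Action-on-cgds} to see the decgd component is unchanged. Your extra care in verifying that $\hat{s}_{pq}\cdot\hat{s}$ returns to the chosen representative $s$ is a detail the paper leaves implicit, but the argument is otherwise the same.
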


\begin{proof}
Recall from Section~\ref{sec:fundamental-group}, \( s_{pq} \) acts by monodromy around the equivariant loop \( (\alpha,\hat{s}_{pq}) \) where \( \alpha \) is a path from \( C \) to \( \hat{s}_{pq} \cdot C \) passing through the wall which swaps the marked points \( s(p),s(p+1),\ldots,s(q) \). We lift \( \alpha \) to \( \tilde{\alpha} \), the unique path in the covering space \( \bigsqcup_{\sigma} \Ss(\sigma \cdot \lamb, \mu^\cc)(\RR) \) starting at the point over \( C \) labelled \( \gamma \).
By Proposition~\ref{prp:crossing-walls-in-general} the point over \( \hat{s}_{1q} \cdot C \) at the end of \( \tilde{\alpha} \) is labelled \( \hat{\gamma} \). Now Proposition~\ref{prp:Action-on-cgds} says acting by \( \hat{s}_{1q} \) does not change the decgd. Hence \( s_{1q}\cdot\gamma = \hat{\gamma} \).
\end{proof}

\subsubsection{The fundamental case for \( \mu \)}
\label{sec:fundamental-case}

We now restrict to the case when \( \lamb = (\sq^n) \) which we call the \emph{fundamental case for} \( \mu \). In this case \( S^{\lamb}_n = S_n \) so the fibre over \( C \) is identified with \( \decgd{\sq^n,\mu^\cc} \). For \( n = 5 \) a decgd of shape \( (\sq^n,\mu^\cc) \) will have the form shown in Figure~\ref{fig:decgd-shape-lambmu}. Note the partition in position \( (1,6) \) is \( \mu \) (the bottom left corner is in position \( (1,1) \)). This is demonstrated by the following lemma.

\begin{Lemma}
  \label{lem:mu-appears-as-complement}
If \( \gamma \in \decgd{\sq^n,\mu^\cc} \) then \( \gamma_{1(n+1)} = \mu \).
\end{Lemma}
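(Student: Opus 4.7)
The plan is to lift \( \gamma \) to an ordinary cylindrical growth diagram \( \tilde{\gamma} \) on \( \II_{\tilde{k}} \) (with \( \tilde{k} = r(d-r) \)) via Proposition~\ref{prp:number-of-decgds}, and then apply the jeu de taquin interpretation of Proposition~\ref{prp:grwoth-diagams-jdt} together with a standard Schubert/Littlewood--Richardson duality for rectangles. Since \( m(i) = \abs{\lambda_i} = 1 \) for \( 1 \le i \le n \), I compute \( \hat{m}(i) = i \) on this range, so \( \bar{m}(1,n+1) = (1,n+1) \) and therefore \( \gamma_{1,n+1} = \tilde{\gamma}_{1,n+1} \). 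It thus suffices to compute \( \tilde{\gamma}_{1,n+1} \).

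From \( \tilde{\gamma} \) I extract the boundary data I need: the cylindrical condition gives \( \tilde{\gamma}_{1,\tilde{k}+1} = \Lambda \), and the shape condition on the decgd applied at position \( n+1 \) (where the \( \mu^\cc \) block sits) lifts to \( \tilde{\gamma}_{n+1,\tilde{k}+1} = \mu^\cc \). I then consider the rectangular subregion of \( \II_{\tilde{k}} \) with corners \( (n+1,n+1) \), \( (1,n+1) \), \( (n+1,\tilde{k}+1) \), and \( (1,\tilde{k}+1) \). Iterating Proposition~\ref{prp:grwoth-diagams-jdt} row-by-row from north to south across this region, the standard tableau on the south edge (of shape \( \tilde{\gamma}_{1,n+1} \), starting from \( \emptyset \)) is precisely the jeu de taquin rectification of the standard tableau on the north edge (of shape \( \Lambda/\mu^\cc \)).

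To finish, I invoke the rectangle complement identity for Littlewood--Richardson coefficients, \( c^{\Lambda}_{\mu^\cc,\eta} = \delta_{\eta,\mu} \), which is equivalent to Poincar\'e duality of Schubert classes in \( \Gr(r,d) \). Since this coefficient counts standard tableaux of shape \( \Lambda/\mu^\cc \) rectifying to a given tableau of shape \( \eta \), every standard tableau of shape \( \Lambda/\mu^\cc \) must rectify to a tableau of shape exactly \( \mu \). Hence \( \tilde{\gamma}_{1,n+1} = \mu \), and the lemma follows.

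The only real obstacle is carefully iterating Proposition~\ref{prp:grwoth-diagams-jdt} (which addresses only a single unit rectangle) across a tall rectangle, but this is the standard fact that a rectangular growth diagram encodes the jeu de taquin rectification of its north edge by its west edge; the Schubert-duality step at the end is entirely classical.
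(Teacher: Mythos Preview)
Your proof is correct and follows essentially the same approach as the paper: both identify the rectangular region with corners \( (n+1,n+1),(1,n+1),(n+1,\tilde{k}+1),(1,\tilde{k}+1) \) (the paper phrases this in the decgd coordinates as the unit-height rectangle with corners \( (n+1,n+1),(n+1,n+2),(1,n+1),(1,n+2) \) and then lifts locally), read off a rectification from the growth-diagram rules, and conclude via the rectangle duality \( c^{\Lambda}_{\nu,\mu^\cc}=\delta_{\nu\mu} \). The only cosmetic differences are that you lift globally via Proposition~\ref{prp:number-of-decgds} and compare the north/south edges, whereas the paper lifts only the relevant rectangle and compares the east/west edges.
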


\begin{proof}
By definition \( \gamma_{n(n+1)} = \mu^\cc \). Consider the rectangular subdiagram with corners \( (n+1,n+1), (n+1,n+2), (1,n+1) \) and \( (1,n+2) \). Extend this rectangular region to a growth diagram and let \( \gamma_{1(n+1)} = \nu \),
\begin{equation*}
  \begin{tikzpicture}[scale=1]
    \node (00) at (0,0) {\( \emptyset \)};
    \node (01) at (0,1) {\( \mu^\cc \)};
    \node (50) at (5,0) {\( \nu \)};
    \node (51) at (5,1) {\( \Lambda \)};
    \foreach \from/\to in {00/01,00/50,01/51,50/51} 
        \draw[help lines] (\from) -- (\to); 
    \draw[decorate,decoration={brace,amplitude=5pt}] (-0.2,0) -- (-0.2,1) node[midway,xshift=-12pt] {\( S \)};
    \draw[decorate,decoration={brace,amplitude=5pt,mirror}] (5.2,0) -- (5.2,1) node[midway,xshift=12pt] {\( T \)};
  \end{tikzpicture}
\end{equation*}
Choose tableaux \( S \) and \( T \) lifting the dual equivalence classes as shown. Since our rectangle is a growth diagram, \( S \) must be the rectification of \( T \). However the rectification of a tableaux of shape \( \Lambda \bs \mu \) has shape \( \mu^\cc \). One way to see this is to note \( c^{\Lambda}_{\nu \mu^\cc} \) is the number of \( T \in \SYT(\Lambda \bs \nu) \) slide equivalent to \( S \), since we have produced such a \( T \), \( c^{\Lambda}_{\nu\mu^\cc} > 0 \). But \( c^{\Lambda}_{\nu \mu^\cc} = \delta_{\nu \mu} \) (see \cite[Section~9.4]{Fulton:1997vaa}) hence \( \nu = \mu \).
\end{proof}

\begin{figure}
  \centering
    \ytableausetup{nosmalltableaux}
  \ytableausetup{boxsize=0.4em}
  \begin{tikzpicture}[scale=0.85]
    \node (11)  at (-1,1) {\( \emptyset \)};
    \node (01)  at (0,1)  {\ydiagram{1}};
    \node (-11) at (1,1)  {\( \cdot \)};
    \node (-21) at (2,1)  {\( \cdot \)};
    \node (-31) at (3,1)  {\( \cdot \)};
    \node (-41) at (4,1)  {\( \cdot \)};
    \node (-51) at (5,1)  {\ydiagram{3,3}};

    \node (22)  at (-2,2) {\( \emptyset \)};
    \node (12)  at (-1,2) {\ydiagram{1}};
    \node (02)  at (0,2)  {\( \cdot \)};
    \node (-12) at (1,2)  {\( \cdot \)};
    \node (-22) at (2,2)  {\( \cdot \)};
    \node (-32) at (3,2)  {\( \cdot \)};
    \node (-42) at (4,2)  {\ydiagram{3,3}};

    \node (33)  at (-3,3) {\( \emptyset \)};
    \node (23)  at (-2,3) {\ydiagram{1}};
    \node (13)  at (-1,3) {\( \mu|_{2} \)};
    \node (03)  at (0,3)  {\( \cdot \)};
    \node (-13) at (1,3)  {\( \cdot \)};
    \node (-23) at (2,3)  {\( \cdot \)};
    \node (-33) at (3,3)  {\ydiagram{3,3}};

    \node (44)  at (-4,4) {\( \emptyset \)};
    \node (34)  at (-3,4) {\ydiagram{1}};
    \node (24)  at (-2,4) {\( \cdot \)};
    \node (14)  at (-1,4) {\( \mu|_{3} \)};
    \node (04)  at (0,4)  {\( \cdot \)};
    \node (-14) at (1,4)  {\( \cdot \)};
    \node (-24) at (2,4)  {\ydiagram{3,3}};

    \node (55)  at (-5,5) {\( \emptyset \)};
    \node (45)  at (-4,5) {\ydiagram{1}};
    \node (35)  at (-3,5) {\( \cdot \)};
    \node (25)  at (-2,5) {\( \cdot \)};
    \node (15)  at (-1,5) {\( \mu|_{4} \)};
    \node (05)  at (0,5)  {\( \cdot \)};
    \node (-15) at (1,5)  {\ydiagram{3,3}};

    \node (66)  at (-6,6) {\( \emptyset \)};
    \node (56)  at (-5,6) {\ydiagram{1}};
    \node (46)  at (-4,6) {\( \cdot \)};
    \node (36)  at (-3,6) {\( \cdot \)};
    \node (26)  at (-2,6) {\( \cdot \)};
    \node (16)  at (-1,6) {\( \mu \)};
    \node (06)  at (0,6)  {\ydiagram{3,3}};

    \node (77)  at (-7,7) {\( \emptyset \)};
    \node (67)  at (-6,7) {\( \mu^\cc \)};
    \node (57)  at (-5,7) {\( \cdot \)};
    \node (47)  at (-4,7) {\( \cdot \)};
    \node (37)  at (-3,7) {\( \cdot \)};
    \node (27)  at (-2,7) {\( \cdot \)};
    \node (17)  at (-1,7) {\ydiagram{3,3}};

    \foreach \from/\to in {
11/01, 01/-11, -11/-21, -21/-31, -31/-41, -41/-51,  
22/12, 12/02, 02/-12, -12/-22, -22/-32, -32/-42,  
33/23, 23/13, 13/03, 03/-13, -13/-23, -23/-33,  
44/34, 34/24, 24/14, 14/04, 04/-14, -14/-24,  
55/45, 45/35, 35/25, 25/15, 15/05, 05/-15,  
66/56, 56/46, 46/36, 36/26, 26/16, 16/06,
77/67, 67/57, 57/47, 47/37, 37/27, 27/17,
11/12, 01/02, -11/-12, -21/-22, -31/-32, -41/-42,  
22/23, 12/13, 02/03, -12/-13, -22/-23, -32/-33,  
33/34, 23/24, 13/14, 03/04, -13/-14, -23/-24,  
44/45, 34/35, 24/25, 14/15, 04/05, -14/-15,  
55/56, 45/46, 35/36, 25/26, 15/16, 05/06,
66/67, 56/57, 46/47, 36/37, 26/27, 16/17}
        \draw[help lines] (\from) -- (\to);
    \end{tikzpicture}\ytableausetup{boxsize = normal}
  \caption{A decgd of shape \( (\sq^5,\mu^\cc) \)}
  \label{fig:decgd-shape-lambmu}
\end{figure}

We can use Lemma~\ref{lem:mu-appears-as-complement} to give a bijection between \( \decgd{\sq^n,\mu^\cc} \) and \( \SYT(\mu) \) by choosing a path though \( \II \). The standard Young tableaux associated to a decgd \( \gamma \) is described by the growth diagram along the path. Fix the unique path from \( (1,1) \) to \( (1,n+2) \) and denote it \( \alpha \). Use \( \alpha \) to identify \( \decgd{\sq^n,\mu^\cc} \) with \( \SYT(\mu) \).

There is an action of the cactus group \( J_n \) on \( \SYT(\mu) \) by \emph{partial Sch\"utzenberger involutions}. This action was studied by Berenstein and Kirillov~\cite{Kirillov:1995te}. The partial Sch\"utzenberger involution of order \( q \) on \( T \in \SYT(\mu) \) is defined by applying the Sch\"utzenberger involution to the subtableau \( T|_q \) and leaving the remaining entries (i.e. those in \( T|_{q+1,n} \)) unchanged.

\begin{Proposition}
  \label{prp:flipping-decg-is-schutz}
The identification of \( \SYT(\mu) \) and \( \decgd{\sq^n,\mu^\cc} \) above identifies the action of \( J_n \) on both sides. More precisely if \( T \) is obtained using the path \( \alpha \) from the decgd \( \gamma \) then the standard tableaux \( s \cdot T \) is obtained by taking the path \( \alpha \) through the decgd \( s \cdot \gamma \), for all \( s \in J_n \).
\end{Proposition}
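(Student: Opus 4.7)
The plan is to verify equivariance on a set of generators of $J_n$. By Lemma~\ref{lem:generators-of-J_n} it suffices to check the statement for each generator $s_{1q}$ with $2 \le q \le n$, so fix such a $q$ and a decgd $\gamma \in \decgd{\sq^n,\mu^\cc}$ with associated standard tableau $T = T(\gamma)$ read off the path $\alpha$.

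The first step is to unwind the wall-crossing formula of Proposition~\ref{prp:crossing-walls-in-general} together with Corollary~\ref{cor:monodromy-action}. After a routine indexing match between the cactus generator $s_{1q}$ and the wall $\associahedron_{1,q+1}$ (the wall separating $s(1),\ldots,s(q)$ from the remaining labels), the action of $s_{1q}$ on $\gamma$ is the flip of the triangle in $\II_{n+1}$ with vertices $(1,1)$, $(q+1,q+1)$, $(1,q+1)$. Explicitly, $(s_{1q}\cdot\gamma)_{ij} = \gamma_{(q+2-j)(q+2-i)}$ for $1 \le i \le j \le q+1$, and $(s_{1q}\cdot\gamma)_{ij} = \gamma_{ij}$ outside this triangle.

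Next one reads off the tableau $T(s_{1q}\cdot\gamma)$ along $\alpha$. For $j \ge q+2$ the partitions at $(1,j)$ are unchanged, so the entries labelled $q+1,\ldots,n$ of the new tableau agree with those of $T$. For $1 \le j \le q+1$, the partitions at $(1,j)$ in $s_{1q}\cdot\gamma$ trace out precisely the horizontal edge of the flipped triangle in $\gamma$, read from $(q+1,q+1)$ to $(1,q+1)$. Since $\lambda_i = \sq$ for $i \le n$, the restriction of $\gamma$ to this triangle is an ordinary growth diagram with $\emptyset$ constant on the diagonal, so Corollary~\ref{cor:growth-schutz} identifies the horizontal-edge tableau with $\evac(T|_q)$. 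Combining the two pieces, $T(s_{1q}\cdot\gamma)$ is obtained from $T$ by replacing $T|_q$ with $\evac(T|_q)$, i.e.\ by the partial Sch\"utzenberger involution of order $q$, which is exactly how $s_{1q}$ acts on $\SYT(\mu)$.

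The main obstacle is the indexing bookkeeping: one must match the cactus generator $s_{1q}$, which reverses the $q$ labels in the closed interval $[1,q]$, with the wall $\associahedron_{1,q+1}$ appearing in Proposition~\ref{prp:crossing-walls-in-general}, and then verify that the path $\alpha$ read through the flipped decgd traces out precisely the horizontal edge of the triangle so that Corollary~\ref{cor:growth-schutz} applies verbatim.
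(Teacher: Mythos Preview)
Your proof is correct and follows essentially the same approach as the paper's: reduce to the generators $s_{1q}$ via Lemma~\ref{lem:generators-of-J_n}, apply the wall-crossing flip of Proposition~\ref{prp:crossing-walls-in-general}, and then invoke Corollary~\ref{cor:growth-schutz} to identify the flipped portion with $\evac(T|_q)$. The paper phrases the last step as reading $\gamma$ along an alternate path $\beta$ (horizontal from $(q+1,q+1)$ to $(1,q+1)$, then vertical to $(1,n+2)$), whereas you phrase it as reading $s_{1q}\cdot\gamma$ along $\alpha$ and identifying that with the horizontal edge of $\gamma$; these are the same computation.
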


\begin{proof}
We only need to show this for \( s = s_{1q} \) by Lemma~\ref{lem:generators-of-J_n}. Let \( \gamma \) be the decgd with \( T \) along the path \( \alpha \). Denote the shape of \( T|_q \) by \( \mu|_q \), so \( \gamma_{1(q+1)} = \mu|_q \). Consider the triangle in \( \gamma \) depicted in Figure~\ref{fig:action-s_1q}. Proposition~\ref{prp:crossing-walls-in-general} says that \( s_{1q}\cdot \gamma \) will contain the same triangle, flipped about the axis shown. In particular the tableau obtained along the path \( \alpha \) in \( s_{1q} \cdot \gamma \) is the same as the tableau obtained along the path \( \beta \) from \( (q+1,q+1) \) to \( (1,q+1) \) and then to \( (1,n+2) \) in \( \gamma \). By Corollary~\ref{cor:growth-schutz} this is the partial Sch\"utzenberger involution \( s_{1q} \cdot T \).
\end{proof}

\begin{figure}
  \centering
  \ytableausetup{nosmalltableaux}
  \ytableausetup{boxsize=0.4em}
  \begin{tikzpicture}[scale=0.85]
    \node (12)  at (-1,2) {\( \cdot \)};
    \node (-52) at (5,2)  {\( \cdot \)};
    \node (77)  at (-7,7) {\( \cdot \)};
    \node (17)  at (-1,7) {\( \cdot \)};

    \node (15)  at (-1,5) {\( \mu|_{q} \)};

    \node (54)  at (-4.5,5)  {}; 
    \node (16)  at (-1,6) {\( \mu \)};
    \node (06)  at (0.2,6)  {};

    \foreach \from/\to in {
12/-52, 12/77, 77/17, 17/-52, 
12/15, 54/15,
16/17, 16/06}
        \draw[help lines] (\from) -- (\to);

\draw[dashed] (15) -- (-4,2) node[near end,solid] {\AxisRotator[rotate=45]};
\node[anchor=south] at (-4,3) {\( s_{1q} \)};

\draw[dotted, decoration={markings,mark=at position 1 with {\arrow[gray]{>}}},
         postaction={decorate}] (-4.65,5.15) -- (-1.15,5.15) node[midway,anchor=south] {\( \beta \)} -- (-1,5.3) -- (-1,5.85);
    \end{tikzpicture}\ytableausetup{boxsize = normal}
  \caption{The action of \( s_{1q} \)}
  \label{fig:action-s_1q}
\end{figure}
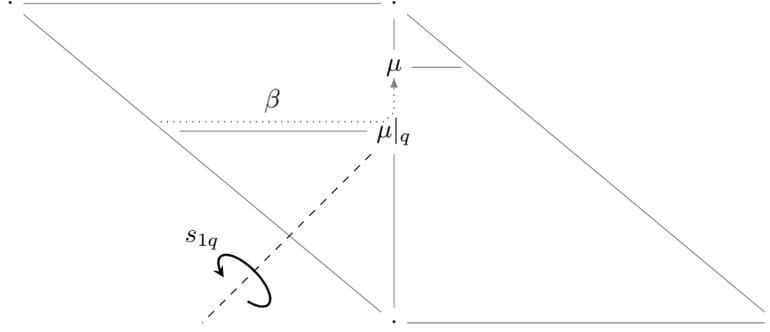

\begin{Corollary}
  \label{cor:eq-monodrom-fund-case}
Identify the fibre in \( \Ss(\sq^n,\mu^\cc)(\RR) \) over \( C \) with \( \SYT(\mu) \) as describe above. The equivariant monodromy action of \( J_n \) is given by partial Sch\"utzenberger involutions.
\end{Corollary}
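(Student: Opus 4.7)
The plan is to assemble the result directly from the two preceding statements. Concretely, in the fundamental case $\lamb = (\sq^n)$ the stabiliser $S_n^\lamb$ equals all of $S_n$, so the disjoint union $\bigsqcup_\sigma \decgd{\sigma\cdot\lamb,\mu^\cc}$ in Corollary~\ref{cor:monodromy-action} collapses to the single set $\decgd{\sq^n,\mu^\cc}$. That corollary then tells me the $J_n$-equivariant monodromy on the fibre over $C$, transported to $\decgd{\sq^n,\mu^\cc}$, is the wall-crossing operation $s_{pq}\cdot\gamma = \hat{\gamma}$.

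Next I would invoke Lemma~\ref{lem:generators-of-J_n} to reduce to checking the claim for the generators $s_{1q}$, $2 \le q \le n$. For these generators, Proposition~\ref{prp:flipping-decg-is-schutz} identifies the wall-crossing operation $\gamma \mapsto \hat{\gamma}$ with the partial Sch\"utzenberger involution of order $q$ under the bijection $\decgd{\sq^n,\mu^\cc} \longleftrightarrow \SYT(\mu)$ given by reading along the fixed path $\alpha$ from $(1,1)$ to $(1,n+2)$. Here it is worth emphasising that Lemma~\ref{lem:mu-appears-as-complement} is what guarantees that $\alpha$ really does produce a standard tableau of shape $\mu$, so that the bijection with $\SYT(\mu)$ is well defined.

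Combining these two facts gives the statement: the $J_n$-action transported to $\SYT(\mu)$ sends $T$ to the tableau obtained by applying the partial Sch\"utzenberger involution $s_{1q}\cdot T$ for each generator $s_{1q}$, and hence acts by partial Sch\"utzenberger involutions in general. There is no substantive obstacle to address: the entire geometric and combinatorial content has already been extracted in Corollary~\ref{cor:monodromy-action} (monodromy $=$ wall crossing) and Proposition~\ref{prp:flipping-decg-is-schutz} (wall crossing $=$ partial Sch\"utzenberger involution via growth-diagram flipping, which itself rests on Corollary~\ref{cor:growth-schutz}). The only thing to verify carefully is that the coset index $\sigma$ genuinely disappears in the fundamental case, which is immediate from $S^\lamb_n = S_n$.
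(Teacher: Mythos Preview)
Your proposal is correct and matches the paper's reasoning: the corollary is stated without proof because it is immediate from Corollary~\ref{cor:monodromy-action} (monodromy is wall crossing on $\decgd{\sq^n,\mu^\cc}$, the disjoint union collapsing since $S_n^{\lamb}=S_n$) together with Proposition~\ref{prp:flipping-decg-is-schutz} (wall crossing equals partial Sch\"utzenberger under the path-$\alpha$ bijection). The reduction to the generators $s_{1q}$ via Lemma~\ref{lem:generators-of-J_n} is already absorbed into the proof of Proposition~\ref{prp:flipping-decg-is-schutz}, so you need not invoke it again, but doing so does no harm.
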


\section{Bethe algebras}
\label{sec:bethe-algebras}

In this section we define the Bethe algebras and recall the relationship between Bethe algebras and Schubert intersections. We review the notion of Galois theory for a finite morphism of varieties. We then use the results of Section~\ref{sec:speyers-flat-family} prove Theorem~\ref{thm:main-thm}.

\subsection{Definition of Bethe algebras}
\label{sec:def-bethe-algebras}

Let \( \glr[t] := \mathfrak{gl}_r \otimes \CC[t] \) be the \emph{current algebra} of polynomials with coefficients in \( \mathfrak{gl}_r \). For a formal variable \( u \) and an element \( x \in \mathfrak{gl}_r \) define the generating function
\begin{equation*}
  x(u) \deq \sum_{s=0}^\infty (x \otimes t^s) u^{-s-1}.
\end{equation*}
This is a useful accounting device. For example, it allows us to define, for any \( a \in \CC \), an automorphism \( \rho_a \) of \( \mathfrak{gl}_r[t] \) by the assignment, \( x(u) \mapsto x(u-a) \), for every element \( x \in \glr \). This means we map \( x t^s \) to \( x(a+t)^s \), the coefficient of \( u^{-s-1} \) in the expansion of \( x(u-a) \) about infinity. For a \( \glr[t] \)-module \( M \) and a complex number \( a \in \CC \), we define the \emph{evaluation module} \( M(a) \) as the pullback over the map \( \rho_a \).

In a similar fashion we define the \emph{evaluation} morphism \( \mathrm{ev} \map{\glr[t]}{\glr} \) by the assignment \( x(u) \mapsto xu^{-1} \), meaning we send \( t \) to zero and \( x \) to itself. Then any \( \glr \)-module can be made into a \( \glr[t] \)-module by pullback. Given a \( \glr \)-module \( N \), as a \( \glr[t] \)-module, \( t \) acts by zero. Hence on \( N(a) \), \( xt^s \) acts by \( a^sx \).

If \( \partial \) is differentiation with respect to \( u \) then we can define the following noncommutative determinant by expansion along the first column,
\begin{equation*}
  \bdiffop := \det
  \begin{pmatrix}
    \partial - e_{11}(u) &          - e_{21}(u) & \cdots &          - e_{r1}(u) \\
             - e_{12}(u) & \partial - e_{22}(u) & \cdots &          - e_{r2}(u) \\
             \vdots     &      \vdots         & \ddots &       \vdots        \\
             - e_{1r}(u) &          - e_{2r}(u) & \cdots & \partial - e_{rr}(u) \\
  \end{pmatrix}
\end{equation*}
where \( e_{ij} \) are the standard generators for \( \mathfrak{gl}_r \). The determinant \( \bdiffop \) has the form
\begin{equation*}
  \bdiffop = \partial^r + \sum_{i=1}^r B_i(u) \partial^{r-i},
\end{equation*}
for some power series with coefficients \( B_{is} \in \glr[t] \),
\begin{equation*}
  B_i(u) = \sum_{s=i}^\infty B_{is}u^{-s}.
\end{equation*}

\begin{Definition}
  \label{def:bethe-algebra}
The \emph{universal Bethe algebra} is the subalgebra \( \bethe \), of \( U(\glr[t]) \) generated by the coefficients \( B_{is} \). For an \( \bethe \)-module \( M \), we call the image of \( \bethe \) in \( \End(M) \) the \emph{Bethe algebra associated to} \( M \).
\end{Definition}

By~\cite[Propositions~8.2 and~8.3]{Mukhin:2006vt} the universal Bethe algebra is a commutative subalgebra of \( U(\glr[t]) \) and commutes with the action of \( \glr \subset \glr[t] \). As a result, for any \( \glr[t] \)-module \( M \), and any weight \( \lambda \), the subspaces \( M_{\lambda} \), \( M^{\mathsf{sing}} \) and \( M_{\lambda}^{\mathsf{sing}} \subset M \) are \( \bethe \)-submodules. Let \( \lamb \) be a sequence of partitions with at most \( r \) rows. As a special case of Definition~\ref{def:bethe-algebra}, for \( z = (z_1, z_2, \ldots, z_n) \in X_n \) we denote the Bethe algebra associated to
\begin{equation*}
L(\lamb;z)_\mu = {[L(\lambda_{1})(z_1) \otimes L(\lambda_{2})(z_2) \otimes \cdots \otimes L(\lambda_{n})(z_n)]}_{\mu}^{\text{sing}}
\end{equation*}
by \( \bethe(\lamb;z)_\mu \). 

\begin{Lemma}
\label{lem:bethe-inv-Aff-sym}
The Bethe algebras \( \bethe(\lamb;z)_\mu \) are invariant under the action of the group \( \Aff_1 \): if \( \alpha \in \CC^\times, \beta \in \CC \), then \( \bethe(\lamb;\alpha z + \beta)_\mu = \bethe(\lamb;z)_\mu \) as subalgebras of \( \End(\Llambmu) \).
\end{Lemma}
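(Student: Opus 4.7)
The plan is to decompose the $\Aff_1$-action into translations $z \mapsto z + \beta$ and scalings $z \mapsto \alpha z$, exhibit for each an automorphism of $\glr[t]$ that preserves the universal Bethe subalgebra $\bethe \subseteq U(\glr[t])$, and verify that this automorphism intertwines the corresponding two actions on $\Llambmu$. Writing $\pi_z\colon U(\glr[t]) \to \End(\Llambmu)$ for the algebra map giving the action at parameter $z$, so that $\bethe(\lamb;z)_\mu = \pi_z(\bethe)$, it suffices to produce automorphisms $\rho_\beta$ and $\sigma_\alpha$ of $\glr[t]$ with $\pi_{z + \beta} = \pi_z \circ \rho_\beta$ and $\pi_{\alpha z} = \pi_z \circ \sigma_\alpha$, each preserving $\bethe$.

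For translation, I would take $\rho_\beta$ to be the automorphism already defined in the paper by $\rho_\beta(x t^s) = x(t + \beta)^s$, equivalently $\rho_\beta(x(u)) = x(u - \beta)$. The intertwining $\pi_{z + \beta} = \pi_z \circ \rho_\beta$ is immediate from the fact that $\rho_\beta(x t^s) = x(t+\beta)^s$ acts on $L(\lambda_a)(z_a)$ by $(z_a + \beta)^s x$, matching the action of $x t^s$ on $L(\lambda_a)(z_a + \beta)$. Applying $\rho_\beta$ entry-wise to the matrix defining $\bdiffop$ leaves $\partial$ untouched, so $\rho_\beta(\bdiffop)(u) = \bdiffop(u - \beta)$ and hence $\rho_\beta(B_i(u)) = B_i(u - \beta)$. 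Expanding $(u - \beta)^{-s}$ as a power series in $u^{-1}$ expresses each $\rho_\beta(B_{is})$ as an upper-triangular combination of the $B_{ik}$ for $k \ge s$ with coefficient $1$ on the diagonal; this linear change of generators is invertible, so $\rho_\beta(\bethe) = \bethe$.

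For scaling, introduce the automorphism $\sigma_\alpha$ of $\glr[t]$ that rescales $t$ by $\alpha$, namely $\sigma_\alpha(x t^s) = \alpha^s x t^s$, or on generating functions $\sigma_\alpha(x(u)) = \alpha^{-1} x(u/\alpha)$. The identity $\pi_{\alpha z} = \pi_z \circ \sigma_\alpha$ follows from $(\alpha z_a)^s x = \alpha^s (z_a^s x)$. To show $\sigma_\alpha(\bethe) = \bethe$, I would apply $\sigma_\alpha$ entry-wise to the matrix defining $\bdiffop$ and then carry out the change of variable $v = u/\alpha$, using $\partial_u = \alpha^{-1}\partial_v$; a factor of $\alpha^{-1}$ can then be pulled out of each of the $r$ rows, giving $\sigma_\alpha(\bdiffop)(u) = \alpha^{-r}\,\bdiffop(v)\big|_{v = u/\alpha}$. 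Matching coefficients of $\partial^{r - i}$ yields $\sigma_\alpha(B_i(u)) = \alpha^{-i} B_i(u/\alpha)$, and matching coefficients of $u^{-s}$ then gives $\sigma_\alpha(B_{is}) = \alpha^{s - i} B_{is}$, so $\sigma_\alpha$ acts on each generator of $\bethe$ by a nonzero scalar.

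The only substantive step is the change-of-variable identity for $\sigma_\alpha(\bdiffop)$, which requires some care because the entries of the defining matrix involve both the noncommutative quantities $e_{ji}(u)$ and the differential operator $\partial_u$; the factor $\alpha^{-r}$ precisely compensates the $r$ rows of the determinant. Combining the two cases via $\alpha z + \beta = (\alpha z) + \beta$ gives $\pi_{\alpha z + \beta} = \pi_z \circ \sigma_\alpha \circ \rho_\beta$, and since both $\rho_\beta$ and $\sigma_\alpha$ preserve $\bethe$, so does their composition. This yields $\bethe(\lamb;\alpha z + \beta)_\mu = \bethe(\lamb;z)_\mu$ as required.
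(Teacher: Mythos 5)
The paper does not prove this lemma itself; it simply cites~\cite[Proposition~1]{Rybnikov:2014wh}. Your argument supplies a complete self-contained proof and is essentially correct: the decomposition of the $\Aff_1$-action into a translation $\rho_\beta$ (already defined in the paper as the evaluation shift) and a scaling automorphism $\sigma_\alpha(xt^s)=\alpha^s xt^s$, the intertwining identities $\pi_{z+\beta}=\pi_z\circ\rho_\beta$ and $\pi_{\alpha z}=\pi_z\circ\sigma_\alpha$, and the computation that each of $\rho_\beta$, $\sigma_\alpha$ preserves the universal Bethe algebra $\bethe$, are exactly the right ingredients.

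One small indexing slip in the translation step: expanding
\begin{equation*}
(u-\beta)^{-s} \;=\; \sum_{k\ge 0}\binom{s+k-1}{k}\beta^{k}\,u^{-s-k}
\end{equation*}
and collecting the coefficient of $u^{-m}$ in $B_i(u-\beta)=\sum_{s\ge i}B_{is}(u-\beta)^{-s}$ gives
\begin{equation*}
\rho_\beta(B_{im}) \;=\; \sum_{s=i}^{m}\binom{m-1}{m-s}\beta^{m-s}\,B_{is},
\end{equation*}
so $\rho_\beta(B_{im})$ is a combination of the $B_{is}$ with $s\le m$, not $s\ge m$ as you wrote. The conclusion is unaffected: the transformation is still unipotent triangular ($\rho_\beta(B_{im})=B_{im}+\text{lower terms}$), hence the generated subalgebra is unchanged; or, even more directly, the inclusions $\rho_\beta(\bethe)\subseteq\bethe$ and $\rho_{-\beta}(\bethe)\subseteq\bethe$ compose to give equality. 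The scaling part is sound as written: pulling $\alpha^{-1}$ out of each of the $r$ rows of the column-expanded determinant $\bdiffop$ after the substitution $v=u/\alpha$, $\partial_u=\alpha^{-1}\partial_v$, yields $\sigma_\alpha(B_i(u))=\alpha^{-i}B_i(u/\alpha)$ and hence $\sigma_\alpha(B_{is})=\alpha^{s-i}B_{is}$, so $\sigma_\alpha$ acts diagonally on the generators by nonzero scalars.
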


\begin{proof}
This is proved for example in~\cite[Proposition~1]{Rybnikov:2014wh}.
\end{proof}

By Lemma~\ref{lem:bethe-inv-Aff-sym} the Bethe algebras form a family of algebras over \(  \oM{n+1}(\CC) \). Denote the spectrum of this family by \( \pi\map{\bspec(\lamb)_\mu}{\oM{n+1}(\CC)} \) and the fibre over a point \( z \) by \( \bspec(\lamb;z)_\mu \).

\begin{Theorem}[\cite{Mukhin:2009et}, Corollary~6.3]
\label{thm:simplicity}
Suppose \( z_1, z_2, \ldots, z_n \) are distinct real numbers and \( \mu \) is a partition of \( n \) (with at most \( r \) rows). The Bethe algebra \( \bethe(\lamb;z)_\mu \) has simple spectrum. In particular \( \bethe(\lamb;z)_\mu \) has dimension \( c^\mu_{\lamb} \) and over \( \oM{n+1}(\RR) \), \( \pi \) is a covering of degree \( c^{\mu}_{\lamb} \).
\end{Theorem}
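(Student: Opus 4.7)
The plan is to obtain the theorem by transporting the problem to Schubert calculus via the Mukhin--Tarasov--Varchenko isomorphism and then invoking real Schubert calculus, which is exactly the content of \cite{Mukhin:2009et}. First I would recall the main construction of \cite{Mukhin:2009et}: for any $z \in X_n$ there is a canonical identification of the Bethe algebra $\bethe(\lamb;z)_\mu$ with the algebra of functions on the scheme-theoretic Schubert intersection $\Omega(\lamb,\mu^{\cc};z,\infty)$ inside the Grassmannian $\Gr(r,d)$ (with $d$ chosen so that $\abs{\lamb} = r(d-r) - \abs{\mu^{\cc}}$ and the osculating flags placed at $z_1,\ldots,z_n,\infty$). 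Under this identification, simultaneous eigenvectors of the Bethe algebra correspond to points of the intersection, and simplicity of the spectrum corresponds to the reducedness of the intersection as a zero-dimensional scheme.

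Next, I would apply the real Schubert calculus theorem (the Shapiro conjecture, proved by Mukhin--Tarasov--Varchenko) to conclude that whenever $z_1,\ldots,z_n$ are real and distinct, the intersection $\Omega(\lamb,\mu^{\cc};z,\infty)$ is reduced and consists entirely of real points. Counting by the Littlewood--Richardson rule gives exactly $c^{\mu}_{\lamb}$ such points (where I implicitly use $c^{\Lambda_{r,d}}_{\lamb,\mu^{\cc}} = c^{\mu}_{\lamb}$ by the standard duality between $\lambda$ and $\lambda^{\cc}$). Translating back through the MTV isomorphism, this simultaneously gives $\dim \bethe(\lamb;z)_\mu = c^{\mu}_{\lamb}$ and the fact that $\bethe(\lamb;z)_\mu$ is a product of $c^{\mu}_{\lamb}$ copies of $\CC$, i.e.\ has simple spectrum.

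Finally, to promote this pointwise statement to the covering statement for $\pi$ over $\oM{n+1}(\RR)$, I would combine simplicity with flatness. Flatness of the family $\bspec(\lamb)_\mu \to \oM{n+1}(\CC)$ follows from the isomorphism with the restriction of Speyer's family $\Ss(\lamb,\mu^{\cc})$ to $\oM{n+1}$ together with Theorem~\ref{thm:props-of-G-and-S}(i), so $\pi$ is a finite flat morphism of constant degree $c^{\mu}_{\lamb}$. Because each real fibre is reduced with $c^{\mu}_{\lamb}$ real points, the differential of $\pi$ is everywhere invertible on $\pi^{-1}(\oM{n+1}(\RR))$, so the restriction of $\pi$ to real points is an unramified topological covering of degree $c^{\mu}_{\lamb}$.

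The only substantive step is the second one, namely the MTV proof of the Shapiro reality conjecture; the rest is bookkeeping via the MTV isomorphism and general properties of finite flat morphisms. Accordingly my proof would essentially be a citation of~\cite{Mukhin:2009et} for reality and simplicity, followed by a short deduction of the covering assertion from flatness of Speyer's family.
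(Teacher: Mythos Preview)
The paper does not give its own proof of this statement; it is simply quoted from \cite{Mukhin:2009et} (Corollary~6.3) without further argument. Your outline is a faithful sketch of how that result is obtained in the cited reference, so it is correct and there is nothing to compare.
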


\subsection{The MTV isomorphism}
\label{sec:mtv-isomorphism}

In this section we recall the definition of the MTV isomorphism. Choose \( d \ge r \) such that \( r(d-r) \ge \abs{\lamb} \). Mukhin, Tarasov and Varchenko~\cite{Mukhin:2009et} define the \emph{MTV-isomorphism} \( \theta\map{\bspec(\lamb)_\mu}{\Omega(\lamb,\mu^\cc)} \) in the following way. Let \( \chi \in \bspec(\lamb;z)_\mu \), be an element of the fibre over \( z \). We consider \( \chi \) as a map \( \chi\map{\bethe(\lamb;z)_\mu}{\CC} \). Let \( b_{is} = \chi(B_{is}) \) be the image of the generators. Define
\begin{equation*}
  b_i(u) = \sum_{s=i}^\infty b_{is}u^{-s}.
\end{equation*}
Consider the differential operator
\begin{equation*}
  \bdiffop^\chi = \partial^r + \sum_{i=1}^r b_i(u)\partial^{r-i},
\end{equation*}
which is just the differential operator \( \bdiffop \) evaluated on the eigenspace of \( \Llambmu \) corresponding to \( \chi \). Define \( \theta(\chi) \) to be the kernel of \( \bdiffop^\chi \) acting on \( \CC_d[u] \).

\begin{Theorem}[{\cite{Mukhin:2009et}}]
  \label{thm:mtv-iso}
The subspace \( \theta(\chi) \subseteq \CC_d[u] \) has dimension \( r \) and is contained in the Schubert intersection \( \Omega(\lamb,\mu^\cc;z,\infty) \). Moreover it defines an isomorphism of families over \( \oM{n+1}(\CC) \) of the varieties \( \bspec(\lamb)_\mu \) and \( \Omega(\lamb,\mu^\cc) \).
\end{Theorem}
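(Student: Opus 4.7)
The plan is to prove the theorem in three stages: identify $\theta(\chi)$ with the solution space of an explicit Fuchsian operator, read off the Schubert conditions from local indicial data, and then construct an inverse via the Wronskian construction and promote the set-theoretic bijection to an isomorphism of families.

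First I would show $\theta(\chi) \subseteq \CC_d[u]$ is an $r$-dimensional subspace. On $\Llambmu$ the generating series $B_i(u)$ are rational functions of $u$ with poles contained in $\{z_1,\ldots,z_n\}$, because on each evaluation factor $L(\lambda_a)(z_a)$ the series $x(u)$ sums to a rational function with a single pole at $z_a$. Hence $\bdiffop^\chi$ is a monic order-$r$ Fuchsian operator on $\PP^1$ with singular points contained in $\{z_1,\ldots,z_n,\infty\}$. The expansion $b_i(u) = O(u^{-i})$ forces the indicial exponents at infinity to be $\mu^\cc_j + r - j$, pinning the $r$ local solutions there to polynomials of degree $<d$, and hence globally polynomial. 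The analogous computation at each $z_a$ gives indicial exponents $\lambda_{a,j} + r - j$; Frobenius then produces a basis of $\theta(\chi)$ with the prescribed vanishing orders at each $z_a$, which by the osculating-flag description in Example~\ref{exm:mtv-osculating-flags} is precisely the Schubert condition $\theta(\chi) \in \Omega(\lamb,\mu^\cc;z,\infty)$.

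Next I would construct an inverse map $\tau$ by the Wronskian. Given $V \in \Omega(\lamb,\mu^\cc;z,\infty)$, let $\bdiffop_V$ be the unique monic order-$r$ operator with kernel $V$, explicitly $\bdiffop_V f = \mathrm{Wr}(V,f)/\mathrm{Wr}(V)$. Its coefficients expanded at infinity provide scalars $b_{is}$ and hence a character $\bethe \to \CC$ of the universal Bethe algebra. The crux is to show this character factors through the quotient $\bethe \twoheadrightarrow \bethe(\lamb;z)_\mu$, that is, to produce a nonzero vector in $\Llambmu$ on which every $B_{is}$ acts by $b_{is}$. This is exactly what the Bethe ansatz delivers: for generic $z$ one can explicitly write down an eigenvector whose operator of eigenvalues is $\bdiffop_V$, and a degeneration argument handles non-generic $z$.

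The main obstacle will be upgrading $\theta$ from a bijection on closed points to an isomorphism of families of schemes over $\oM{n+1}(\CC)$. The strategy is to first note that $\tau$, defined via the Wronskian, is manifestly algebraic in the coefficients of a basis of $V$ and so extends to a morphism of varieties. I would then check $\tau\circ\theta = \id$ and $\theta\circ\tau = \id$ on closed points over the dense open $U \subseteq \oM{n+1}(\CC)$ where both families are reduced and finite of degree $c^\mu_\lamb$ (Theorem~\ref{thm:simplicity} on the Bethe side, Littlewood--Richardson on the Schubert side). An isomorphism on a dense open between two finite flat families of the same fibre rank is globally an isomorphism, completing the argument. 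The delicate step is the Bethe-ansatz construction of the eigenvector associated to $V$, which is where the genuine content of the MTV proof lies and where the compatibility between the algebraic Wronskian and the representation-theoretic Bethe algebra is forced.
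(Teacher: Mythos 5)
The paper does not prove this theorem; it is cited to Mukhin--Tarasov--Varchenko \cite{Mukhin:2009et} and used as a black box, so there is no in-paper argument to compare against. Your outline is a fair summary of the external proof's high-level structure --- indicial analysis of \( \bdiffop^\chi \) for the forward direction, the Wronskian construction for the inverse, and a dense-open argument for the isomorphism of families --- but two steps are thinner than the actual content warrants. First, that the local Frobenius solutions assemble into a global \( r \)-dimensional space of polynomials requires ruling out logarithmic solutions, i.e.\ showing \( \bdiffop^\chi \) has trivial monodromy; this does not follow from the indicial exponents alone and is a substantive piece of the MTV argument. Second, factoring the Wronskian character through \( \bethe \twoheadrightarrow \bethe(\lamb;z)_\mu \) via ``the Bethe ansatz plus a degeneration argument'' is the heart of the matter: for non-generic \( z \) the Bethe algebra need not act semisimply, so one cannot simply exhibit eigenvectors and take limits; MTV instead compare the two finite flat families scheme-theoretically (via characteristic polynomials and degree counts) rather than constructing Bethe vectors pointwise. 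You correctly flag this as where the real work lies, but as written the sketch would not close that gap.
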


\subsection{Crystals}
\label{sec:crystals}

We recall briefly how crystals of the irreducible \( \glr \)-modules are realised using semistandard tableaux. For a complete description see~\cite{Hong:2002vh}. The crystal \( \crys = \crys(\sq) \) of the vector representation is 
\begin{equation*}
  \begin{tikzcd}
\ytableausetup{centertableaux}\begin{ytableau}1\end{ytableau} \arrow{r}{1} & \ytableausetup{centertableaux}\begin{ytableau}2\end{ytableau} \arrow{r}{2} & \cdots \arrow{r}{r-1} & \ytableausetup{centertableaux}\begin{ytableau}r\end{ytableau}.
  \end{tikzcd}
\end{equation*}
Using the tensor rule we have a description of the crystal  \( \crys^{\ox n} \) as the set \( \words(n) \) of words of length \( n \) in the letters \( 1,2,\ldots,r \). Identify the element \( \yt{{i_1}} \ox \yt{{i_2}} \ox \ldots \ox \yt{{i_n}} \in B^{\ox n} \)  with the word \( i_1i_2\cdots i_n \). The irreducible \( L(\lambda) \) embeds into the tensor power of vector representations \( V^{\ox n} \) where \( n = \abs{\lambda} \). We can thus realise the crystal \( \crys(\lambda) \) as an appropriate connected component of \( \crys^{\ox n} \). The \emph{RSK-correspondence} gives a bijection between words of length \( n \) and pairs of tableaux:
\begin{equation*}
\RSK\map{\words(n)}{\bigsqcup_{\left| \lambda \right| = n} \SSYT(\lambda) \times \SYT(\lambda)}.
\end{equation*}
See~\cite{Fulton:1997vaa} for a definition. We use \( \mathtt{P} \) and \( \mathtt{Q} \) to denote composition of \( \RSK \) with projection onto the first and second factors (the P and Q-symbols of the word). By a Theorem of Ariki and Kazhdan-Lusztig (see~\cite[Theorem~A]{Ariki:2000uw}) \( u,v \in B^{\ox n} \) lie in the same irreducible crystal (on the same connected component) if and only if \( \QRSK(u) = \QRSK(v) \). For a standard \( \lambda \)-tableau \( T \). We can use \( \RSK^{-1}(T,\cdot) \) to embed \( \SSYT(\lambda) \) into \( \crys^{\ox n} \), this is a connected component of the crystal isomorphic to \( \crys(\lambda) \). This does not depend on \( T \) and identifies the vertices of \( \crys(\lambda) \) with semistandard tableaux.

\subsubsection{Coboundary structure}
\label{sec:coboundary-structure}

The category of (finite dimensional) \( \glr \)-modules is a \emph{braided monoidal category}. In~\cite{Henriques:2006is}, Henriques-Kamnitzer show this structure does not descend to the category of crystals and in fact the category of crystals cannot be given a braiding. Instead, Henriques-Kamnitzer show that the category of crystals satisfies the axioms of a \emph{coboundary category}.

A coboundary monoidal category is a monoidal category with a \emph{commutor}, natural isomorphisms \( \sigma_{XY}\map{X \ox Y}{Y \ox X} \), satisfying a certain coherence condition (similar to the hexagon condition for braided monoidal categories). The important fact for us is that the cactus group, \( J_n \), acts on \( n \)-fold tensor products in a coboundary category. For objects \( B_1, B_2, \ldots, B_n \), for \( 1 \le p < q \le n \) set 
\begin{equation*}
\sigma_{pq} = \id^{\ox (p-1)} \ox \sigma_{B_p \ox \cdots \ox B_{q-1},B_q} \ox \id^{\ox (n-q)}.
\end{equation*}
The generators \( s_{pq} \in J_n \) of the cactus group act in the following way. First set \( s_{p(p+1)} = \sigma_{p(p+1)} \) and inductively \( s_{pq} = s_{(p+1)q} \circ \sigma_{pq} \). We should think of \( s_{pq} \) as swapping the order of \( B_p, B_{p+1}, \ldots, B_q \).

In the case \( \mathfrak{g} = \glr \) Henriques-Kamnitzer give a simple description of the commutor for the category of crystals. The Sch\"utzenberger involution \( \xi \) acts on a crystal by acting on each irreducible component (and thus has vertices identified with semistandard tableaux) individually. For two \( \glr \)-crystals, \( B,C \) define 
\begin{equation}
  \label{eq:cactus-commutor}
\sigma_{B,C}\map{B \ox C}{C \ox B}, \quad \text{by} \quad \sigma_{B,C}(b \ox c) = \xi(\xi(c) \ox \xi(b)).
\end{equation}
This defines the cactus commutor on the category of \( \glr \)-crystals. The generator \( s_{pq} \) of the cactus group acts on \( b_1 \ox b_2 \ox \cdots \ox b_n \in B_1\ox B_2\ox \ldots \ox B_n \) by sending it to
\begin{equation*}
b_1 \ox \cdots \ox \xi(\xi(b_q) \ox \xi(b_{q-1}) \ox \cdots \ox \xi(b_p)) \ox \cdots \ox b_n.
\end{equation*}

\begin{Remark}
  \label{rem:schutz-on-words}
As remarked above we can identify \( \words(n) \) with \( \crys^{\ox n} \). To calculate the action of \( \xi \) on \( w \in \crys^{\ox} \) we use the RSK-correspondence. If \( \RSK(w) = (P,Q) \) then \( \xi(w) = \RSK^{-1}(\xi P, Q) \). In particular \( w \) and \( \xi(w) \) have the same Q-symbol.
\end{Remark}

\subsubsection{Crystals and decgds}
\label{sec:crystals-decgds}

Since \( J_n \) acts on crystals by crystal morphisms it preserves weight spaces and singular vectors. Thus \( s \in J_n \) produces a map of sets \( s\map{\Blambmu}{\crys(\hat{s}\cdot\lamb)^{\mathsf{sing}}_\mu} \), recall that \( \hat{s} \) is the image of \( s \) in \( S_n \). Let \( J^{\lamb}_n \subseteq J_n \) be the preimage of \( S^{\lamb}_n \). The group \( J^{\lamb}_n \) acts on the set \( \Blambmu \).

We can also characterise \( J^{\lamb}_n \) as the equivariant fundamental group of the \( S^{\lamb}_n \)-action on \( \M{n+1}(\RR) \). Thus \( J^{\lamb}_n \) also acts on \( \decgd{\lamb,\mu^\cc} \) as described by Corollary~\ref{cor:monodromy-action}. We will defer the proof of the following theorem to Section~\ref{sec:general-case}. 

\begin{Theorem}
  \label{thm:equivariant-map}
There is a \( J_n^{\lamb} \)-equivariant bijection \( \decgd{\lamb, \mu^\cc} \longrightarrow \Blambmu \).
\end{Theorem}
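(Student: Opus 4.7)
The plan is to reduce to the fundamental case $\lamb = (\sq^n)$ and there identify both sides with $\SYT(\mu)$ in a way that is equivariant for $J_n$, using the fact that both resulting actions on $\SYT(\mu)$ are given by partial Sch\"utzenberger involutions.

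For the reduction, set $N = \abs{\lamb}$ and embed $\crys(\lamb) \hookrightarrow \crys^{\ox N}$ by realising each $\crys(\lambda_i)$ as the connected component of $\crys^{\ox \abs{\lambda_i}}$ containing a fixed highest-weight vector. In parallel, embed $\decgd{\lamb,\mu^\cc}$ into $\decgd{\sq^N,\mu^\cc}$ via the reduction-modulo-$m$ construction of Section~\ref{sec:reduct-cgds}. In both cases the image is characterised by prescribing that the restriction of the combinatorial data to the $i$-th block realises $\lambda_i$; moreover, by Proposition~\ref{prp:crossing-walls-in-general} and the coboundary axioms, the subgroup $J_n^{\lamb} \subseteq J_N$ preserving this block structure acts on each side by restriction of the ambient $J_N$-action. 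It therefore suffices to produce a $J_N$-equivariant bijection in the fundamental case.

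In the fundamental case, identify $\decgd{\sq^n,\mu^\cc} \cong \SYT(\mu)$ by reading the growth diagram along the path $\alpha$ (as in the discussion preceding Corollary~\ref{cor:eq-monodrom-fund-case}). On the crystal side, a singular vector $w \in [\crys^{\ox n}]^{\mathsf{sing}}_\mu$ is a highest-weight vector of weight $\mu$, so its P-symbol is necessarily the Yamanouchi semistandard tableau of shape $\mu$; the map $w \mapsto \mathtt{Q}(w)$ is then a bijection $[\crys^{\ox n}]^{\mathsf{sing}}_\mu \longrightarrow \SYT(\mu)$. Composing these two identifications gives the candidate bijection.

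For equivariance it suffices by Lemma~\ref{lem:generators-of-J_n} to check the generators $s_{1q}$. Corollary~\ref{cor:eq-monodrom-fund-case} says that $s_{1q}$ acts on $\decgd{\sq^n,\mu^\cc} \cong \SYT(\mu)$ by the partial Sch\"utzenberger involution of order $q$. On the crystal side, the explicit formula~(\ref{eq:cactus-commutor}) gives
\begin{equation*}
s_{1q}\cdot w \;=\; \xi\!\left(\xi(b_q)\ox\cdots\ox\xi(b_1)\right)\ox b_{q+1}\ox\cdots\ox b_n
\end{equation*}
for $w = b_1\ox\cdots\ox b_n$. Writing $v = b_1\cdots b_q$ and $v^\star = \xi(b_q)\cdots\xi(b_1)$ for the reverse-complement word, the classical RSK identity $\RSK(v^\star) = (\xi\mathtt{P}(v),\evac\mathtt{Q}(v))$ combined with Remark~\ref{rem:schutz-on-words} yields $\mathtt{P}(\xi(v^\star)) = \mathtt{P}(v)$ and $\mathtt{Q}(\xi(v^\star)) = \evac\mathtt{Q}(v)$. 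Using $\mathtt{Q}(b_1\cdots b_q) = \mathtt{Q}(w)|_q$ and the standard behaviour of insertion under concatenation, one concludes that $\mathtt{Q}(s_{1q}\cdot w)$ is obtained from $\mathtt{Q}(w)$ by applying $\evac$ to entries $\le q$, matching the partial Sch\"utzenberger action. The main obstacle is making the reduction step rigorous, in particular verifying that the geometric and crystal embeddings are compatible and that the $J_n^{\lamb}$-actions on both sides are restrictions of the $J_N$-actions via these inclusions.
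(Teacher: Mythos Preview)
Your fundamental case is correct and matches the paper exactly: you prove Proposition~\ref{prp:std-tab-equiv-bij}, and together with Proposition~\ref{prp:flipping-decg-is-schutz} this gives the $J_n$-equivariant bijection $[\crys^{\ox n}]^{\mathsf{sing}}_\mu \cong \SYT(\mu) \cong \decgd{\sq^n,\mu^\cc}$.

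The reduction step, however, has a genuine gap. You write ``the subgroup $J_n^{\lamb} \subseteq J_N$ preserving this block structure acts on each side by restriction of the ambient $J_N$-action'', but there is no natural group embedding of $J_n$ (or $J_n^{\lamb}$) into $J_N$; cactus groups do not embed into one another along block decompositions the way symmetric groups do. What the paper actually does (Section~\ref{sec:general-case}) is quite different and is where the work lies. For each generator $s_{1q}$ of $J_n$ it constructs by hand a specific element $\bar{s}_{1q}\in J_{\tilde n}$, and for each choice of standard $\lamb$-tableau $\Tb$ it defines embeddings $\imath_{\Tb}$ and $\jmath_{\Tb}$ on the crystal and decgd sides. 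The key point is that these embeddings are \emph{not} equivariant for a fixed choice of $\Tb$: one must instead show that the squares
\[
\begin{tikzcd}
\left[\crys(\lamb)\right]^{\mathsf{sing}}_\mu \arrow[hook]{r}{\imath_{\Tb}} \arrow{d}{s_{1q}} & \left[\crys^{\ox\tilde n}\right]^{\mathsf{sing}}_\mu \arrow{d}{\bar s_{1q}} \\
\left[\crys(\hat s_{1q}\cdot\lamb)\right]^{\mathsf{sing}}_\mu \arrow[hook]{r}{\imath_{\hat s_{1q}\cdot\Tb}} & \left[\crys^{\ox\tilde n}\right]^{\mathsf{sing}}_\mu
\end{tikzcd}
\qquad
\begin{tikzcd}
\decgd{\lamb,\mu^\cc} \arrow[hook]{r}{\jmath_{\Tb}} \arrow{d}{s_{1q}} & \decgd{\sq^{\tilde n},\mu^\cc} \arrow{d}{\bar s_{1q}} \\
\decgd{\hat s_{1q}\cdot\lamb,\mu^\cc} \arrow[hook]{r}{\jmath_{\hat s_{1q}\cdot\Tb}} & \decgd{\sq^{\tilde n},\mu^\cc}
\end{tikzcd}
\]
commute, with the embedding on the bottom row taken relative to the \emph{permuted} sequence $\hat s_{1q}\cdot\Tb$. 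This is the content of Lemmas~\ref{lem:left-sqr-commutes} and~\ref{lem:right-sqr-commutes}, each of which requires a careful calculation with Theorem~\ref{thm:duality-theorem}, Proposition~\ref{prp:subword-Q-symb}, and the shuffling of dual equivalence classes. Your last sentence correctly flags this as ``the main obstacle'', but the proposal as written suggests it is a formality, when in fact it is the substance of the proof in the general case.
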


\subsection{Galois actions for finite maps}
\label{sec:galois-actions}

In this section we recall the notion of the Galois group for a finite morphism between varieties. This was defined by Harris in~\cite{Harris:1979to}. Let \( \pi\map{Y}{X} \) be a dominant morphism between varieties over \( \CC \) of equal dimension. We say it has \emph{degree} \( d \) if the associated field extension \( K(X) \hookrightarrow K(Y) \) has degree \( d \). For a generic point \( x \in X \) the fibre consists of \( d \) reduced points, which we denote \( y_1,y_2,\ldots,y_d \). By the primitive element theorem there exists \( \alpha \in K(Y) \) such that \( K(Y) = K(X)[\alpha] \). Let \( P \in K(X)[t] \) be the minimal polynomial of \( \alpha \). By definition, \( P \) has degree \( d \).

Let \( \Mm_x \) be the field of germs of meromorphic functions around \( x \) and \( \Mm_i \) the field of meromorphic functions around \( y_i \). We have natural inclusions \( K(X) \subseteq \Mm_x \) and \( K(Y) \subseteq \Mm_i \). Since \( \pi \) is locally around \( y_i \) an isomorphism of analytic varieties we have isomorphisms \( \phi_i\map{\Mm_i}{\Mm_x} \). Let \( K(Y)_i = \phi_i(K(Y)) \), and let \( L \) be the subfield of \( \Mm_x \) generated by the \( K(Y)_i \). The images \( \alpha_i = \phi_i(\alpha) \in L \) are all distinct and thus are a complete set of roots for \( P \). The field \( L \) is the Galois closure, in \( \Mm_x \) of the extension \( K(X) \hookrightarrow K(Y) \) and thus the Galois group \( \Gal(L/K(X)) \) acts on the set of roots \( \set{\alpha_i} \) which we may identify canonically with the fibre \( \pi^{-1}(x) \).

\begin{Definition}
  \label{def:galois-grp}
The image of \( \Gal(L/K(X)) \) in \( S_{\pi^{-1}(x)} \), the group of permutations of the fibre, is called the \emph{Galois group of} \( \pi \) and is denoted \( \Gal(\pi) \) or \( \Gal(\pi;x) \) if we wish to emphasise the basepoint.
\end{Definition}

\begin{Remark}
  \label{rem:galois-birational-invariant}
The definition of the Galois group \( \Gal(\pi;x) \) depends only on local properties of the morphism \( \pi \), it is thus a birational invariant of \( \pi \). Let \( \pi'\map{X'}{Y'} \) be another degree \( d \), dominant morphism and suppose we have birational maps making the following diagram commute,
\begin{equation*}
  \begin{tikzcd}
    Y \arrow{d}{\pi} \arrow[dashed]{r}{g} & Y' \arrow{d}{\pi'} \\
    X \arrow[dashed]{r}{f} & X'.
  \end{tikzcd}
\end{equation*}
Suppose \( f \) is defined on \( x \) and \( g \) is defined on \( y_1,y_2,\ldots,y_d \). The morphisms \( f \) and \( g \) provide isomorphism \( f^\flat\map{\Mm_{x}}{\Mm_{f(x)}} \) and \( g_i^\flat\map{K(Y)}{K(Y')} \), which restrict to isomorphisms \( K(X) \longrightarrow K(X') \) and \( K(Y)_i \longrightarrow K(Y')_i \) (and thus also between \( L \) and \( L' \)). Importantly these isomorphisms send primitive elements to primitive elements and thus after identifying the groups \( S_{\pi^{-1}(x)} \) and \( S_{\pi'^{-1}(f(x))} \) using \( g \), the Galois groups are equal.
\end{Remark}

We can always find a dense open subset \( U \subseteq Y \) over which \( \pi \) is unramified. Restricting we obtain a topological covering map \( \pi|_{\pi^{-1}(U)} \). If \( x \in U \) we can consider the \emph{monodromy group} \( M_U(\pi;x) \subseteq S_{\pi^{-1}(x)} \). The following theorem relates the Galois group to the monodromy group.

\begin{Proposition}[{\cite[Section~I.2]{Harris:1979to}}]
  \label{prp:monodromy-equals-galois}
For any \( U \) as above the monodromy group equals the Galois group, \( M_U(\pi;x) = \Gal(\pi;x) \). In particular the monodromy group does not depend on the open neighbourhood chosen to define it.
\end{Proposition}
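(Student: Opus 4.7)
The plan is to identify both groups with the same subgroup of $S_{\pi^{-1}(x)}$ via the primitive element $\alpha$ of the definition. First I would choose a small contractible analytic neighbourhood $V \subseteq U$ of $x$ over which $\pi$ splits as $d$ trivial sheets, one through each $y_i$. Restricting $\alpha$ to each sheet and pushing down along the local isomorphism $\phi_i\colon \Mm_i \to \Mm_x$ yields exactly the $d$ germs $\alpha_i \in \Mm_x$ from the definition of $\Gal(\pi;x)$, and the bijection $\pi^{-1}(x) \leftrightarrow \{\alpha_1,\ldots,\alpha_d\}$ is compatible with both actions to be compared.

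For the inclusion $M_U(\pi;x) \subseteq \Gal(\pi;x)$, I would take a loop $\gamma$ in $U$ based at $x$ and analytically continue each branch $\alpha_i$ along $\gamma$. Because each $\alpha_i$ is a root of the minimal polynomial $P \in K(X)[t]$ and $P$ has coefficients in $K(X)$ which are single-valued around $x$, analytic continuation must permute the set $\{\alpha_i\}$, and by construction the resulting permutation equals the monodromy permutation of the fibre obtained by lifting $\gamma$ to $Y$. Since continuation fixes every element of $K(X)$, it extends to a field automorphism of $L = K(X)(\alpha_1,\ldots,\alpha_d)$ fixing $K(X)$, i.e.\ an element of $\Gal(L/K(X))$ with the prescribed action on the roots.

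For the reverse inclusion $\Gal(\pi;x) \subseteq M_U(\pi;x)$, I would construct the Galois closure $\tilde{\pi}\colon \tilde{Y} \to X$ of $\pi$ as an appropriate irreducible component of the normalisation of the $d$-fold fibre product $Y \times_X \cdots \times_X Y$, so that $K(\tilde{Y}) \cong L$ and the deck group of $\tilde{\pi}$, over the open set where it is \'etale, is canonically $\Gal(L/K(X))$. After shrinking $U$ to an open subset $U'$ over which both $\pi$ and $\tilde{\pi}$ are topological covers, $\pi_1(U',x)$ surjects onto this deck group, using that $\tilde{Y}$ is irreducible so its restriction to $U'$ is a connected covering. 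Transporting the deck-group action back to $\pi^{-1}(x)$ through the Galois correspondence recovers the Galois action on $\{\alpha_i\}$, giving the reverse inclusion. Independence of $U$ then follows because for any two admissible open sets $U_1, U_2$, the refinement $U_1 \cap U_2$ yields a monodromy group that embeds into both $M_{U_1}(\pi;x)$ and $M_{U_2}(\pi;x)$ and equals $\Gal(\pi;x)$ by the above.

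The main obstacle is the reverse inclusion: realising every field-theoretic Galois automorphism as honest monodromy requires matching the deck group of the Galois closure with $\Gal(L/K(X))$, which in turn rests on irreducibility of $\tilde{Y}$ over the unramified locus and on compatibility of the field-theoretic and topological Galois correspondences. Once these are in place, both $M_U(\pi;x)$ and $\Gal(\pi;x)$ are forced to be the same subgroup of $S_{\pi^{-1}(x)}$.
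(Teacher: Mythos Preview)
The paper does not give its own proof of this proposition; it simply cites Harris \cite[Section~I.2]{Harris:1979to} for the result. Your outline is essentially Harris's original argument: analytic continuation of the branches $\alpha_i$ along loops gives the inclusion $M_U(\pi;x)\subseteq\Gal(\pi;x)$, and the reverse inclusion comes from realising the Galois closure $L$ as the function field of an irreducible variety $\tilde{Y}$, whose restriction over the unramified locus is a connected covering with deck group $\Gal(L/K(X))$, forcing $\pi_1$ to surject onto that group.

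One small point worth tightening: in the reverse inclusion you shrink $U$ to a smaller $U'$ over which $\tilde{\pi}$ is also unramified, and you conclude $\Gal(\pi;x)\subseteq M_{U'}(\pi;x)$. To get the statement for the original $U$ you then need the trivial observation that $M_{U'}(\pi;x)\subseteq M_U(\pi;x)$ (every loop in $U'$ is a loop in $U$), which you use implicitly but do not state. With that remark in place your argument is complete and matches the source the paper relies on.
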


Remark~\ref{rem:galois-birational-invariant} and Proposition~\ref{prp:monodromy-equals-galois} are the key tools we need to calculate (part of) the Galois group of the spectrum of the Bethe algebras.

\subsection{Proof of Theorem~\ref{thm:main-thm}}
\label{sec:proof-of-main-theorem}

The four varieties we have been investigating and their relationship is summarised by the following diagram.
\begin{equation}
\label{eq:summary-rels}
  \begin{tikzcd}
    \bspec(\lamb)_\mu \arrow{r}{\theta} \arrow{d}{\pi} &
        \Omega(\lamb,\mu^\cc) \arrow[hook]{r}{\iota} \arrow{d}{\rho} &
        \Ss(\lamb,\mu^\cc) \arrow{d}{\eta} &
        \Ss(\lamb,\mu^\cc)(\RR) \arrow[hookleftarrow]{l} \arrow{d}{\eta|_\RR} \\
    \oM{n+1}(\CC) \arrow{r}{=} & \oM{n+1}(\CC) \arrow[hook]{r} & \M{n+1}(\CC) & \M{n+1}(\RR) \arrow[hookleftarrow]{l}
  \end{tikzcd}
\end{equation}

The first claim of Theorem~\ref{thm:main-thm} is that there is a homomorphism \( PJ_n \rightarrow \Gal(\pi;z) \) for some generic point \( z \in \oM{n+1} \). Let \( M_\RR \subseteq S_{\eta^{-1}(z)} \) be the monodromy group of the covering \( \eta|_\RR \). Since \( PJ_n = \pi_1(\M{n+1}(\RR);z) \), by definition we have a surjective homomorphism \( PJ_n \rightarrow M_\RR \). 

Choose a dense open subset \( z \in U \subseteq \M{n+1}(\CC) \) over which \( \eta \) is unramified. We can choose \( U \) so that is contains \( \M{n+1}(\RR) \) by Theorem~\ref{thm:real-points-labelling}. The inclusion of the real points \( \Ss(\lamb,\mu^\cc)(\RR) \) induces an inclusion \( M_\RR \hookrightarrow M_U(\eta;z) \). Proposition~\ref{prp:monodromy-equals-galois} now implies that \( M_U(\eta;z) = \Gal(\eta;z) \). The group \( \Gal(\pi;z) \) is a subgroup of \( S_{\pi^{-1}(z)} \). The morphism \( \iota \circ \theta \) identifies the sets \( \pi^{-1}(z) \) and \( \eta^{-1}(z) \). With this identification fixed, \( \Gal(\pi;z) = \Gal(\eta;z) \) by Remark~\ref{rem:galois-birational-invariant}. Hence we have a homomorphism from \( PJ_n \) onto the subgroup \( M_\RR \subseteq \Gal(\pi;z) \).

The second claim of Theorem~\ref{thm:main-thm} is that for a real  point \( z \in \oM{n+1}(\RR) \) there exists a bijection of sets \( \bspec(\lamb;z)_\mu \rightarrow \Blambmu \) equivariant for the action of \( PJ_n \). The isomorphism \( \theta \) identifies \( \bspec(\lamb;z)_\mu \) with \( \Omega(\lamb,\mu^\cc;z,\infty) \). By definition this identification is equivariant for the action of \( PJ_n \). By Theorem~\ref{thm:labelling-of-fibs} \( \Omega(\lamb,\mu^\cc;z,\infty) \) can be identified with \( \decgd{\lamb,\mu^\cc} \). Now we may use Theorem~\ref{thm:equivariant-map} to find a bijection to \( \Blambmu \) which is equivariant with respect to \( J_n \) (and thus \( PJ_n \)).

\subsection{Proof of Theorem~\ref{thm:equivariant-map} in the fundamental case}
\label{sec:special-case}

We will prove Theorem~\ref{thm:equivariant-map} in the fundamental case for \( \mu \) when \( \lamb = (\sq^n) \). First we recall some facts about the interaction of the RSK-correspondence and the Sch\"utzenberger involution.

\subsubsection{Sch\"utzenberger involution and RSK}
\label{sec:schutz-invol-rsk}

For an integer \( x \in [r] \) let \( x^* = r+1-x \). If \( w = x_1x_2\dots x_n \) is a word in the letters \( 1,2,\ldots,r \) let
\begin{equation*}
  w^* \deq x_n^* x_{n-1}^* \cdots x_1^*.
\end{equation*}
With this notation we have a remarkable duality theorem.

\begin{Theorem}
  \label{thm:duality-theorem}
If \( \RSK(w) = (P,Q) \) then \( \RSK(w^*) = (\xi P, \evac Q) \).
\end{Theorem}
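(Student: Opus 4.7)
The plan is to reduce to the case of permutations (standard words), where the statement is the classical Sch\"utzenberger theorem $\RSK(w_0 \sigma w_0) = (\evac P, \evac Q)$.

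First, I would observe that for any word $w$ in $[r]$, the operation $w \mapsto w^*$ commutes with standardization: writing $\mathrm{std}(w)$ for the permutation of $[n]$ obtained from $w$ by replacing each letter with its rank (breaking ties among equal letters left-to-right), one has $\mathrm{std}(w^*) = (\mathrm{std}(w))^*$, where the right-hand $*$ denotes reverse-complement by the longest element $w_0 \in S_n$. This is a direct combinatorial check, tracking how the positions of equal letters transform under reversal and complementation.

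Next, recall two standard compatibilities between standardization and RSK: (i) if $\RSK(w) = (P, Q)$ then $\RSK(\mathrm{std}(w)) = (\mathrm{std}(P), Q)$, where $\mathrm{std}(P)$ is the standardization of the semistandard tableau $P$; (ii) standardization intertwines the two Sch\"utzenberger involutions, $\mathrm{std}(\xi P) = \evac(\mathrm{std}(P))$. Combining these with the classical permutation result yields
\begin{equation*}
\bigl(\mathrm{std}(P(w^*)), Q(w^*)\bigr) = \RSK(\mathrm{std}(w^*)) = \RSK((\mathrm{std}(w))^*) = (\evac\,\mathrm{std}(P), \evac Q) = (\mathrm{std}(\xi P), \evac Q).
\end{equation*}
Since standardization is injective on semistandard tableaux of fixed shape and content, we conclude $P(w^*) = \xi P$ and $Q(w^*) = \evac Q$.

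The main technical content lies in verifying (i), (ii) and the commutativity of $\mathrm{std}$ with $*$; these are folklore results checked by tracking the row insertion algorithm directly, or found in standard references such as~\cite{Fulton:1997vaa}. An alternative self-contained approach would be a direct growth-diagram proof: $w \mapsto w^*$ is realized as the $180^\circ$ rotation of Fomin's growth diagram on $[0,n] \times [0,r]$ (reversal flips horizontally, complementation flips vertically), and the Sch\"utzenberger involution identifications then follow from Corollary~\ref{cor:growth-schutz} applied to the standard $Q$-edge together with its natural semistandard extension applied to the $P$-edge, where each step adds a horizontal strip instead of a single box. The hard part in either route is the semistandard book-keeping, which the reduction to permutations circumvents.
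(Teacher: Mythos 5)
Your proof is correct. The paper itself gives no argument for this theorem — its ``proof'' is a one-line pointer to Section~1 of Appendix~A of Fulton's \emph{Young Tableaux} — so there is no in-paper argument to compare against, and your reduction to permutations is exactly the sort of reasoning that citation hides. The three compatibilities you invoke are all standard and correct: $\mathrm{std}(w^*) = (\mathrm{std}(w))^*$ (a direct bookkeeping check, as you say), $\RSK(\mathrm{std}(w)) = (\mathrm{std}(P), Q)$, and $\mathrm{std}(\xi P) = \evac(\mathrm{std}(P))$ (the last of these follows, for instance, from the fact that standardization commutes both with $180^\circ$ rotation--complementation and with jeu de taquin rectification, so it is not circular with the theorem itself). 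The final deduction is also sound: since $Q(w^*) = \evac Q$ forces $P(w^*)$ to have shape $\mathrm{sh}(P)$, and since $P(w^*)$ and $\xi P$ both have content equal to the reversed content of $w$, injectivity of $\mathrm{std}$ on semistandard tableaux of fixed shape and content gives $P(w^*) = \xi P$. The growth-diagram alternative you sketch is also a legitimate and well-known route, and it has the virtue of being self-contained within the paper's own machinery (Corollary~\ref{cor:growth-schutz}); the standardization reduction has the virtue of offloading all the semistandard bookkeeping onto one injectivity statement. Either would serve as a complete proof.
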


\begin{proof}
See Section~1 of Appendix~A in~\cite{Fulton:1997vaa} for the proof.
\end{proof}

The following proposition gives information about the Q-symbol of subwords. For a skew tableaux \( T \) let \( \Rect(T) \) be the unique tableaux of straight shape, slide equivalent to \( T \).

\begin{Proposition}
  \label{prp:subword-Q-symb}
If \( w = x_1 x_2 \cdots x_n \) is a word with Q-symbol \( Q \) and \( u = x_r x_{r+1} \cdots x_s \) is a (contiguous) subword, then the Q-symbol of \( u \) is \( \Rect(Q|_{r,s}) \).
\end{Proposition}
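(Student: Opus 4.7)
The first step is to identify \( Q|_{r,s} \) as a recording tableau. Let \( v = x_1 \cdots x_{r-1} \), let \( T_i \) denote the P-symbol of \( x_1 \cdots x_i \) under \( \RSK \), and let \( u = x_r \cdots x_s \). By the definition of the RSK correspondence the sequence of straight shapes \( T_{r-1} \subset T_r \subset \cdots \subset T_s \) is precisely the chain of shapes produced by successively inserting the letters of \( u \) into \( T_{r-1} \), and the skew standard tableau of shape \( T_s / T_{r-1} \) whose entry \( i \) marks the box \( T_i \setminus T_{i-1} \) is by construction exactly \( Q|_{r,s} \). In other words, \( Q|_{r,s} \) is the skew recording tableau of the insertion of \( u \) into \( T_{r-1} \).

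The proof then reduces to the following lemma: \emph{for any straight-shape tableau \( T \) and any word \( u \), the skew recording tableau \( R_T(u) \) of the RSK-insertion of \( u \) into \( T \) satisfies \( \Rect(R_T(u)) = \QRSK(u) \)}. I would prove this by induction on \( |T| \). The base case \( |T|=0 \) is immediate since \( R_{\emptyset}(u) = \QRSK(u) \) is already of straight shape. For the inductive step, pick an inner corner of \( T \), let \( T^\flat \) be the result of the jeu de taquin slide of \( T \) into this corner, and encode this slide as a two-row growth diagram as in Proposition~\ref{prp:grwoth-diagams-jdt}. Extending the diagram one column at a time as we insert the successive letters of \( u \) produces a larger growth diagram whose top and bottom horizontal edges encode \( R_T(u) \) and \( R_{T^\flat}(u) \) respectively. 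Reading this growth diagram horizontally and reapplying Proposition~\ref{prp:grwoth-diagams-jdt} shows that \( R_T(u) \) and \( R_{T^\flat}(u) \) differ by a jeu de taquin slide, so \( \Rect(R_T(u)) = \Rect(R_{T^\flat}(u)) \), and the inductive hypothesis finishes the argument.

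The main obstacle is verifying that a single jeu de taquin slide applied to \( T \) propagates coherently through the full RSK insertion of \( u \), so that it emerges as a single jeu de taquin slide on the recording tableau. The growth-diagram formalism reduces this to a column-by-column check that the local growth conditions of Section~\ref{sec:growth-diagrams-def} are preserved as each letter of \( u \) is inserted into both \( T \) and \( T^\flat \) simultaneously. Equivalently, one can phrase the whole argument in the language of dual equivalence: the growth diagram argument shows that \( Q|_{r,s} \) and \( \QRSK(u) \) lie in the same dual equivalence class, and since \( \QRSK(u) \) is of straight shape, Theorem~\ref{thm:props-of-dual-equiv}~(ii) forces \( \QRSK(u) = \Rect(Q|_{r,s}) \).
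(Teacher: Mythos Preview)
The paper does not give its own proof here --- it simply cites Fulton~\cite{Fulton:1997vaa}, Section~5.1, Proposition~1. So you are attempting more than the paper does, and your first paragraph correctly identifies \( Q|_{r,s} \) as the skew recording tableau of the insertion of \( u \) into \( T_{r-1} \). The reduction to your lemma is exactly right.

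The inductive step, however, does not go through as written. Your lemma posits a \emph{straight-shape} tableau \( T \), and a straight-shape tableau has no inner corners, so ``pick an inner corner of \( T \) and slide'' is vacuous. If one tries to repair this by instead removing an outer corner of \( T \) to form \( T^\flat \), the two-row array of shapes
\[
\bigl(\,\text{shape}(T^\flat \leftarrow x_r\cdots x_i),\ \text{shape}(T \leftarrow x_r\cdots x_i)\,\bigr)_{i=r-1,\ldots,s}
\]
need not satisfy the growth-diagram local rule. For instance, take \( T = \yt{12} \), \( T^\flat = \yt{1} \), and insert the single letter \( 1 \): the four corner shapes are \( (1),(2),(2),(2,1) \), and since \( (2,1)\setminus(1) \) consists of two non-adjacent boxes, condition~(iii) of Section~\ref{sec:growth-diagrams-def} forces the two intermediate shapes to differ --- but they are both \( (2) \). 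So the ``extend the two-row diagram column by column'' step fails, and with it the claim that \( R_T(u) \) and \( R_{T^\flat}(u) \) differ by a single slide.

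Your final paragraph also mixes up the two equivalence relations: dual equivalent tableaux must have the same shape, so the skew tableau \( Q|_{r,s} \) and the straight-shape tableau \( \QRSK(u) \) cannot be dual equivalent. What you actually want is that they are \emph{slide} equivalent, which is precisely the content of the lemma you are trying to prove. One correct route (close in spirit to yours) is to place the full word \( w \) on an anti-diagonal as a skew tableau, rectify the prefix \( v \) first, and track what happens to the sub-skew-tableau carrying \( u \); the compatibility of row insertion with jeu de taquin (Fulton, Section~1 of Chapter~5) then gives the result directly without the problematic induction.
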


\begin{proof}
See Proposition~1 in Section~5.1 of~\cite{Fulton:1997vaa}.
\end{proof}

\subsubsection{Standard \( \mu \)-tableaux}
\label{sec:proof-special-case}

Recall from Section~\ref{sec:fundamental-case} the action of \( J_n \) on \( \SYT(\mu) \) by partial Sch\"utzenberger involutions. 

\begin{Proposition}
  \label{prp:std-tab-equiv-bij}
The bijection \( [\crys^{\ox n}]^{\mathsf{sing}}_\mu \longrightarrow \SYT(\mu); w \mapsto \QRSK(w) \), given by taking the Q-symbol of a word, is equivariant for the action of \( J_n \).
\end{Proposition}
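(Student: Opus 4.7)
The plan is to reduce to the generators $s_{1q}$ of $J_n$ (Lemma~\ref{lem:generators-of-J_n}) and verify the equivariance on each such generator by an explicit RSK calculation, using the duality theorem (Theorem~\ref{thm:duality-theorem}) to convert the cactus commutor into a statement about $Q$-symbols.

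First I would unpack the action of $s_{1q}$ on a word $w = x_1 x_2 \cdots x_n \in \crys^{\ox n}$. Writing $u = x_1 x_2 \cdots x_q$ and $v = x_{q+1} \cdots x_n$, the coboundary formula~(\ref{eq:cactus-commutor}) and the observation that $\xi(\yt{i}) = \yt{r+1-i}$ on a single box give
\[
s_{1q} \cdot w \;=\; \xi(u^*)\cdot v,
\]
where $u^* = x_q^* x_{q-1}^* \cdots x_1^*$ is the starred reversal of $u$ in the sense of Section~\ref{sec:schutz-invol-rsk}. Next, using the duality theorem together with Remark~\ref{rem:schutz-on-words} (that $\xi$ on a word preserves the $Q$-symbol and conjugates the $P$-symbol by $\xi$), I would compute
\[
P(\xi(u^*)) = \xi(P(u^*)) = \xi(\xi P(u)) = P(u), \qquad Q(\xi(u^*)) = Q(u^*) = \evac Q(u).
\]

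The key step is then to compare the full $Q$-symbol of $s_{1q}\cdot w = \xi(u^*)\cdot v$ with that of $w$. Since $\xi(u^*)$ and $u$ have identical $P$-symbols, the RSK bumping paths produced by subsequently inserting the letters of $v$ are identical in both cases; in particular the sequence of shapes at steps $q+1, q+2, \ldots, n$ coincides. Hence the entries $q+1, \ldots, n$ of $Q(s_{1q} \cdot w)$ and of $Q(w)$ occupy the same boxes, so
\[
Q(s_{1q} \cdot w)\big|_{q+1,n} \;=\; Q(w)\big|_{q+1,n}.
\]
Meanwhile, the entries $1,\ldots,q$ of $Q(s_{1q}\cdot w)$ form the $Q$-symbol of its length-$q$ prefix $\xi(u^*)$, which is $\evac Q(u) = \evac\bigl( Q(w)|_q\bigr)$ by Proposition~\ref{prp:subword-Q-symb}. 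Glueing these two parts is precisely the partial Sch\"utzenberger involution of order $q$ applied to $Q(w)$, so $Q(s_{1q}\cdot w) = s_{1q} \cdot Q(w)$.

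The only delicate point is to justify cleanly that replacing the length-$q$ prefix $u$ by a word $\xi(u^*)$ with the same $P$-symbol does not disturb the $Q$-entries coming from $v$; this follows at once from the fact that the RSK bumping of $v$ depends only on the current $P$-tableau, but it is worth stating as a separate lemma to keep the exposition transparent. Combining the three observations -- reduction to $s_{1q}$, the duality-theorem identity for $\xi(u^*)$, and the invariance of the $Q$-restriction $|_{q+1,n}$ -- gives the claimed $J_n$-equivariance.
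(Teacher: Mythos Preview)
Your proof is correct and follows essentially the same approach as the paper: reduce to the generators $s_{1q}$, identify $s_{1q}\cdot w$ with $\xi(u^*)v$, use the duality theorem and the $\xi$-invariance of $Q$-symbols to handle the prefix, and observe that the suffix contributes the same boxes. Your justification of the last step via the equality $P(\xi(u^*))=P(u)$ is slightly more explicit than the paper's, but the argument is the same.
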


\begin{proof}
Recall from Lemma~\ref{lem:generators-of-J_n} that the elements \( s_{1q} \) for \( 1 < q \le n \) generate \( J_n \). Let \( w = b_1b_2\cdots b_n \in [B^{\ox n}]^{\mathsf{sing}}_\mu \) be a highest weight word and let \( Q = \QRSK(w) \). Denote the subword \( b_1\cdots b_q \) by \( w_q \). The word \( s_{1q} \cdot w \) is by definition
\begin{equation*}
  \xi(\xi(b_q) \xi(b_{q-1}) \cdots \xi(b_1))b_{q+1}\cdots b_n = \xi(w_q^*)b_{q+1}\cdots b_n.
\end{equation*}
By Remark~\ref{rem:schutz-on-words} the involution \( \xi \) does not change the  Q-symbol of a word so we have that \( \QRSK(\xi(w_q^*)) = \QRSK(w_q^*) \) and by Theorem~\ref{thm:duality-theorem} \( \QRSK(w_q^*) = \evac\; \QRSK(w_q) \). By considering the definition of the RSK correspondence by the insertion algorithm \( \QRSK(w_q) = Q|_{1,q} \). Thus \( \QRSK(s_{1,q}\cdot w)|_{1,q} = \QRSK(\xi(w_q^*)) = \evac\; Q|_{1,q} \). The remaining letters in the word \( s_{1q}\cdot w \) have not changed, therefore \( \QRSK(s_{1q}\cdot w)|_{q+1,n} = Q|_{q+1,n} \). Thus \( s_{1q}\cdot Q = \QRSK(s_{1q}\cdot w) \).
\end{proof}

\subsection{The proof of Theorem~\ref{thm:equivariant-map}}
\label{sec:general-case}

Recall Proposition~\ref{prp:flipping-decg-is-schutz} gives a \( J_n \)-equivariant bijection \( \SYT(\mu) \longrightarrow \decgd{\sq^n,\mu^\cc} \). Combining this with Proposition~\ref{prp:std-tab-equiv-bij} gives the desired \( J_n \)-equivariant bijection \( [\crys^{\ox n}]^{\mathsf{sing}}_\mu \longrightarrow \decgd{\sq^n,\mu^\cc} \) for the fundamental case (when \( \lamb = (\sq^n) \)). Our strategy for the general case will be the following. We will define embeddings \( [\crys(\lamb)]^{\mathsf{sing}}_\mu \hookrightarrow [B^{\ox n}]^{\mathsf{sing}}_\mu \) and \( \decgd{\lamb,\mu} \hookrightarrow \decgd{\sq^n,\mu} \) that are consistent in the sense that the outer squares of the following diagram commute.

\begin{equation}
  \label{eq:commuting-sqrs}
  \begin{tikzcd}
\left[ \crys(\lamb) \right]^{\mathsf{sing}}_\mu \arrow[hook]{r} \arrow{d}{s_{1q}} & 
    \left[ B^{\ox \tilde{n}} \right]^{\mathsf{sing}}_\mu \arrow[<->]{r} \arrow{d}{\bar{s}_{1q}} & 
    \decgd{\sq^{\tilde{n}},\mu} \arrow{d}{\bar{s}_{1q}} & 
    \decgd{\lamb,\mu} \arrow[hookleftarrow]{l} \arrow{d}{s_{1q}} \\
\left[ \crys(\hat{s}_{1q} \cdot \lamb) \right]^{\mathsf{sing}}_\mu \arrow[hook]{r} & 
    \left[ B^{\ox \tilde{n}} \right]^{\mathsf{sing}}_\mu \arrow[<->]{r} & 
    \decgd{\sq^{\tilde{n}},\mu} & 
    \decgd{\hat{s}_{1q} \cdot \lamb,\mu} \arrow[hookleftarrow]{l}
  \end{tikzcd}
\end{equation}

Here \( s_{1q} \) is a generator of \( J_n \), \( \hat{s}_{1q} \) its image in \( S_n \) and \( \bar{s}_{1q} \in J_{\tilde{n}} \) is a particular element we construct below. Both embeddings will be defined below and are given by a sequence of standard tableaux \( (\Tb) \), where \( T_i \) is a standard \( \lambda_i \)-tableau, we call this object a standard \( \lamb \)-tableau. The inner square of~(\ref{eq:commuting-sqrs}) commutes since this is case \( \lamb = (\sq^{\tilde{n}}) \) which was proven above.

The idea is that the image of \( \bar{s}_{1q} \) in \( S_{\tilde{n}} \) acts by preserving the blocks of the first \( \left| \lambda_1 \right| \) letters, the next \( \left| \lambda_2 \right| \) letters and so on, while permuting these \( n \) blocks in the same way as \( \hat{s}_{1q} \). Let \( m_i = \sum_{j=1}^{i-1} \left| \lambda_j \right| \) and  \( m_i^q = \sum_{j=0}^{i-1} \left| \lambda_{\hat{s}_{1q}(j)} \right| \). Denote the generators of \( J_{\tilde{n}} \) by \( \tilde{s}_{kl} \). Define
\begin{equation*}
  \bar{s}_{1q} = \left( \prod_{i=1}^{q} \tilde{s}_{(m^{q}_i+1)m^{q}_{i+1}} \right) \tilde{s}_{(m_p+1)m_q}.
\end{equation*}

Given a standard \( \lamb \)-tableau, \( \Tb \), we define the embedding \( [\crys(\lamb)]^{\mathsf{sing}}_\mu \hookrightarrow [B^{\ox n}]^{\mathsf{sing}}_\mu \), denoted \( \imath_{\Tb} \) by sending \( b_1\ox \cdots \ox b_n \in [\crys(\lamb)]^{\mathsf{sing}}_\mu \) to the word
\begin{equation*}
   w = x_1x_2\cdots x_{\tilde{n}} \in [B^{\ox n}]^{\mathsf{sing}}_\mu
\end{equation*}
such that \( x_{m_i+1} \cdots x_{m_{i+1}} \) is the unique word with \( P \)-symbol \( b_i \) and Q-symbol \( T_i \). We say that \( w \) having this property has \( \Tb \) as its \emph{\( \lamb \)-partial Q-symbols}.

\begin{Lemma}
  \label{lem:left-sqr-commutes}
The left hand square of~(\ref{eq:commuting-sqrs}),
\begin{equation*}
  \begin{tikzcd}
\left[ \crys(\lamb) \right]^{\mathsf{sing}}_\mu \arrow[hook]{r}{\imath_{\Tb}} \arrow{d}{s_{1q}} & \left[ B^{\ox \tilde{n}} \right]^{\mathsf{sing}}_\mu \arrow{d}{\bar{s}_{1q}} \\
\left[ \crys(\hat{s}_{1q} \cdot \lamb) \right]^{\mathsf{sing}}_\mu \arrow[hook]{r}{\imath_{\hat{s}_{1q} \cdot \Tb}} & \left[ B^{\ox \tilde{n}} \right]^{\mathsf{sing}}_\mu
  \end{tikzcd},
\end{equation*}
commutes.
\end{Lemma}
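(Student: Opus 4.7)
The plan is to verify the commutativity pointwise. Fix $b_\bullet = b_1 \ox \cdots \ox b_n \in [\crys(\lamb)]^{\mathsf{sing}}_\mu$ with image $w = \imath_{\Tb}(b_\bullet) = w_1 w_2 \cdots w_n$, where $w_i$ is the unique word with $P$-symbol $b_i$ and $Q$-symbol $T_i$. I would then show that $\bar{s}_{1q} \cdot w$ is a word whose first $q$ blocks (of reorganised lengths $\abs{\lambda_q}, \abs{\lambda_{q-1}}, \ldots, \abs{\lambda_1}$) have $P$-symbols equal to the components of $s_{1q} \cdot b_\bullet$ and $Q$-symbols $T_q, T_{q-1}, \ldots, T_1$, while the remaining blocks coincide with $w_{q+1}, \ldots, w_n$; this is exactly $\imath_{\hat{s}_{1q} \cdot \Tb}(s_{1q} \cdot b_\bullet)$. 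The argument naturally splits into a $Q$-symbol analysis and a $P$-symbol analysis.

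For the $Q$-symbol bookkeeping, I would use the fact that the cactus generator $\tilde{s}_{1,N}$ acts on a word $x = x_1 \cdots x_N$ by $x \mapsto \xi(x^*)$ (with the conventions of Remark~\ref{rem:schutz-on-words}), so by Theorem~\ref{thm:duality-theorem} it preserves the overall $P$-symbol and applies $\evac$ to the overall $Q$-symbol. The long reversal factor in $\bar{s}_{1q}$ therefore applies $\evac$ to $Q(w_1 \cdots w_q)$; via Proposition~\ref{prp:subword-Q-symb}, the block-level $Q$-symbol of the $i^{\mathrm{th}}$ reorganised block is then the rectification of the restriction $\evac Q(w_1 \cdots w_q)|_{m^q_i+1,\, m^q_{i+1}}$. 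A growth-diagram argument of the kind in Section~\ref{sec:growth-diagrams} identifies this restriction with $\evac T_{q-i+1}$. The $q$ disjoint local reversals (which commute by cactus relation~(ii)) each apply $\evac$ block-wise, and two evacuations cancel to recover $T_{q-i+1}$ as the $Q$-symbol of the new $i^{\mathrm{th}}$ block, as required.

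For the $P$-symbols the key input is the coboundary structure on $\glr$-crystals. The coarse cactus generator $s_{1q} \in J_n$ acting on the bracketed tensor $(B^{\ox \abs{\lambda_1}}) \ox \cdots \ox (B^{\ox \abs{\lambda_n}})$ can be expanded using the defining recursion $s_{1q} = s_{2q} \circ \sigma_{1q}$ together with cactus relation~(iii), which commutes reversals past contained reversals; iterating this unwinding produces exactly $\bar{s}_{1q} \in J_{\tilde n}$, and indeed this is what motivates the definition of $\bar{s}_{1q}$. Consequently $\bar{s}_{1q}$ on the subcrystal $\imath_{\Tb}(\crys(\lamb)) \subset B^{\ox \tilde n}$ realises $s_{1q}$ on $\crys(\lambda_1) \ox \cdots \ox \crys(\lambda_n)$, which combined with the $Q$-symbol calculation yields the commutativity. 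The main obstacle is precisely this coarse-fine identification of cactus actions: stating and proving cleanly that $s_{1q}$ acting on the bracketed tensor matches $\bar{s}_{1q}$ acting on the unbracketed one. This is a general consequence of the cactus relations in any coboundary category, but requires careful derivation from the recursive form of the $s_{pq}$ and the explicit commutor formula~(\ref{eq:cactus-commutor}).
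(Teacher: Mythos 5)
Your $Q$-symbol bookkeeping is essentially the same computation the paper does: expand $\bar{s}_{1q}$ into its long reversal and local reversals, use that $\xi$ preserves $Q$-symbols, apply the duality theorem (Theorem~\ref{thm:duality-theorem}) to turn the long reversal into an $\evac$ of the $Q$-symbol of the first $m_q$ letters, then use Proposition~\ref{prp:subword-Q-symb} to read off block-level $Q$-symbols, and let the local reversals cancel the stray evacuations. So far, so good.

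Where your proposal diverges from the paper, and where the gap lies, is in the $P$-symbol step. You propose to prove that $\bar{s}_{1q} \in J_{\tilde n}$ acting on the subcrystal $\imath_{\Tb}(\crys(\lamb)) \subset B^{\ox\tilde n}$ realises $s_{1q} \in J_n$ on the bracketed tensor $\crys(\lambda_1)\ox\cdots\ox\crys(\lambda_n)$, and you attribute this to ``the cactus relations in any coboundary category.'' That is not quite right: the cactus relations~(i)--(iii) are relations inside the abstract group $J_n$ and tell you nothing about how the commutor on a bracketed (coarse) tensor product compares with the cactus action on the unbracketed (fine) tensor product. What you actually need is a cabling/coarsening compatibility for the \emph{commutor} with respect to associativity, which is a coherence statement about the coboundary category itself, not about the cactus group. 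It is true for Henriques--Kamnitzer's crystal commutor, but it is a theorem, not a formal consequence of the $J_n$ presentation, and you have neither proved it nor cited a reference; you acknowledge this is ``the main obstacle,'' and it remains one. Your proposal as written therefore has a genuine gap at precisely the step it flags as hardest.

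The paper avoids this entirely with a short trick you should compare against: since $\imath_{\hat{s}_{1q}\cdot\Tb}\circ s_{1q}(b)$ and $\bar{s}_{1q}\circ\imath_{\Tb}(b)$ are both elements of $[B^{\ox\tilde n}]^{\mathsf{sing}}_\mu$, they are \emph{highest-weight} words of the same weight $\mu$ and hence slide equivalent; the $Q$-symbol computation (Proposition~\ref{prp:local-dual-equiv} plus Theorem~\ref{thm:props-of-dual-equiv}~\ref{item:words-de-if-Q}) shows they are dual equivalent; so Theorem~\ref{thm:props-of-dual-equiv}~\ref{item:slide-de-unique} forces them to be the same word. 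No $P$-symbol analysis, and in particular no coboundary coherence, is needed. If you want to pursue your route, you must first state and prove the cabling compatibility of the crystal commutor (or find it in the literature); otherwise, the paper's dual-equivalence-plus-highest-weight argument is both shorter and self-contained given what is already established in Section~\ref{sec:dual-equiv-classes}.
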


\begin{proof}
Let \( b = b_1 \ox \cdots \ox b_n \in [\crys(\lamb)]^{\mathsf{sing}}_\mu \). By definition \( \imath_{\hat{s}_{1q} \cdot \Tb} \circ s_{1q} (b) \) has \( \hat{s}_{1q} \cdot \lamb \)-partial Q-symbols \( \hat{s}_{1q} \cdot \Tb \). Our first job is to show the same is true for \( \bar{s}_{1q} \circ \imath_{\Tb}(b) \), i.e.  \( \bar{s}_{1q} \circ \imath_{\Tb}(b) \) lies in the same copy of \( [\crys(\lamb)]^{\mathsf{sing}}_\mu \).

By definition \( w = \imath_{\Tb}(b) \) has \( \lamb \)-partial Q-symbols \( \Tb \), that is, if \( w = x_1 x_2 \cdots x_{\tilde{n}} \) then \( x_{m_i+1}\cdots x_{m_{i+1}} \) has Q-symbol \( T_i \). We will use the notation \( w|_{i,j} \) for the subword \( x_i \cdots x_j \). Let \( \Tb' \) be the  \( \hat{s}_{1q} \cdot \lamb \)-partial Q-symbols of \( \overline{s}_{1q} \cdot w \). If \( i > q \) then \( \left( \bar{s}_{1q} \cdot w \right)|_{m^{s_{1q}}_i+1,m^{s_{1q}}_{i+1}} =  w|_{m_i+1,m_{i+1}} \), so
\begin{equation*}
T_i' = \mathtt{Q} \circ \RSK \left( \left( \bar{s}_{1q} \cdot w \right)|_{m^{s_{1q}}_i+1,m^{s_{1q}}_{i+1}} \right) = \mathtt{Q} \circ \RSK \left( w|_{m_i+1,m_{i+1}} \right) = T_i.
\end{equation*}
Since \( \hat{s}_{1q}(i) = i \), we have \( T_i = T_{\hat{s}_{1q}(i)} \) for \( i > q \). On the other hand, if \( i < q \), let \( s_{1 m_q} \cdot w = y_1y_2 \cdots y_{\tilde{n}} \). By definition,
\begin{align*}
T_{i}' &= \mathtt{Q} \circ \RSK\left( \left( \bar{s}_{1q} \cdot w \right)|_{m^{s_{1q}}_i+1,m^{s_{1q}}_{i+1}} \right) \\
&= \mathtt{Q} \circ \RSK \left( \left( s_{(m^{s_{1q}}_{i}+1)m^{s_{1q}}_{i+1}} \cdot y_1y_2 \cdots y_{\tilde{n}} \right)|_{m^{s_{1q}}_i+1,m^{s_{1q}}_{i+1}} \right), \\ 
\intertext{where we have used the definition of \( \overline{s}_{1q} \). Now applying the definition of the cactus group action on words,}
T_i' &= \mathtt{Q} \circ \RSK \left( \xi (y^*_{m^{s_{1q}}_{i+1}} \cdots y^*_{m^{s_{1q}}_{i}+1}) \right). \\ 
\intertext{Using the fact that \( \xi \) preserves the Q-symbol of a word and applying Theorem~\ref{thm:duality-theorem} we obtain}
T_i' &= \evac \circ \mathtt{Q} \circ \RSK \left( y_{m^{s_{1q}}_{i}+1} \cdots y_{m^{s_{1q}}_{i+1}} \right). \\
\intertext{Now we can apply the rectification property for Q-symbols of subwords from Proposition~\ref{prp:subword-Q-symb}, so} 
T_i' &= \evac \circ \Rect\left( \mathtt{Q} \circ \RSK \left( y_1 \cdots y_{\tilde{n}} \right)|_{m^{s_{1q}}_i+1,m^{s_{1q}}_{i+1}} \right) \\ 
&= \evac \circ \Rect\left( \mathtt{Q} \circ \RSK \left( \xi(x^*_{m_q} \cdots x^*_{1})x_{m_q+1} \cdots x_{\tilde{n}} \right)|_{m^{s_{1q}}_i+1,m^{s_{1q}}_{i+1}} \right) \\ 
&= \evac \circ \Rect\left( \mathtt{Q} \circ \RSK \left( x^*_{m_q} \cdots x^*_{1}x_{m_q+1} \cdots x_{\tilde{n}} \right)|_{m^{s_{1q}}_i+1,m^{s_{1q}}_{i+1}} \right), \\
\intertext{where we have used the definition of \( y_1\ldots y_{\tilde{n}} \) and the fact that \( \xi \) preserves Q-symbols again. Picking out the correct subword and applying Theorem~\ref{thm:duality-theorem} gives} 
T_i' &= \evac \circ \mathtt{Q} \circ \RSK \left( x^*_{m_{s_{1q}(i)+1}} \cdots x^*_{m_{s_{1q}(i)}+1} \right) \\ 
&= \evac \circ \evac \circ \mathtt{Q} \circ \RSK \left( x_{m_{s_{1q}(i)}+1} \cdots x_{m_{s_{1q}(i)+1}} \right). \\ 
\intertext{Since \( \evac \) is an involution,}
T'_i &= \mathtt{Q} \circ \RSK \left( x_{m_{s_{1q}(i)}+1} \cdots x_{m_{s_{1q}(i)+1}} \right) \\ 
&= T_{\hat{s}_{1q}(i)}.
\end{align*}
Now, by Proposition~\ref{prp:local-dual-equiv} and Theorem~\ref{thm:props-of-dual-equiv}, property~\ref{item:words-de-if-Q}, \( \imath_{\hat{s}_{1q} \cdot \Tb} \circ s_{1q} (b) \) and \( \bar{s}_{1q} \circ \imath_{\Tb}(b) \) are dual equivalent words. Since they are by definition highest weight words they are also slide equivalent and thus by Theorem~\ref{thm:props-of-dual-equiv}, property~\ref{item:slide-de-unique}, must be the same word.
\end{proof}

Now we explain the embeddings in the right hand square of~(\ref{eq:commuting-sqrs}). Again let \( \Tb \) be a standard \( \lamb \)-tableaux. For \( \gamma \in \decgd{\lamb,\mu^\cc} \) we can lift this to \( \decgd{\sq^{\tilde{n}},\mu^\cc} \) by simply choosing a representative for each dual equivalence class along the path from \( (1,1) \) to \( (1,n+2) \). We want to do this in a controlled way. Let \( \alpha_i \) be the dual equivalence class allocated to the edge \( (1,i) - (1,i+1) \). Choose a lift \( S_i \) of \( \alpha_i \) such that \( S_i \) is slide equivalent to \( T_i \). Since the intersection of any slide equivalence class and dual equivalence class is a single tableaux, we have a unique choice for \( S_i \). The map \( \jmath_{\Tb} \) is defined by sending \( \gamma \) to the above described decgd in \( \decgd{\sq^{\tilde{n}},\mu^\cc} \).

\begin{Lemma}
  \label{lem:right-sqr-commutes}
The right hand square of~(\ref{eq:commuting-sqrs}),
\begin{equation*}
  \begin{tikzcd}
\decgd{\lamb,\mu^\cc} \arrow[hook]{r}{\jmath_{\Tb}} \arrow{d}{s_{1q}} & \decgd{\sq^{\tilde{n}},\mu^\cc} \arrow{d}{\bar{s}_{1q}} \\
\decgd{\hat{s}_{1q}\cdot \lamb,\mu^\cc} \arrow[hook]{r}{\jmath_{\hat{s}_{1q} \cdot \Tb}} & \decgd{\sq^{\tilde{n}},\mu^\cc}
  \end{tikzcd},
\end{equation*}
commutes.
\end{Lemma}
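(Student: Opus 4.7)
The plan is to reduce Lemma~\ref{lem:right-sqr-commutes} to the already-established Lemma~\ref{lem:left-sqr-commutes}, transferring the statement from the word setting to the growth-diagram setting via the $J_{\tilde{n}}$-equivariant identifications of Propositions~\ref{prp:flipping-decg-is-schutz} and~\ref{prp:std-tab-equiv-bij}.

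First I observe that both $\jmath_{\hat{s}_{1q}\cdot\Tb}(s_{1q}\cdot\gamma)$ and $\bar{s}_{1q}\cdot\jmath_{\Tb}(\gamma)$ lie in $\decgd{\sq^{\tilde{n}},\mu^\cc}$, so by Remark~\ref{rem:path-defines-uniquely} each is determined by the standard $\mu$-tableau read along the path from $(1,1)$ to $(1,\tilde{n}+1)$. Let $U\in\SYT(\mu)$ be the tableau associated to $\jmath_{\Tb}(\gamma)$ under the identification of Proposition~\ref{prp:flipping-decg-is-schutz}. By the construction of $\jmath_{\Tb}$, each skew subtableau $U|_{m_i+1,\,m_{i+1}}$ rectifies to $T_i$, and so by Proposition~\ref{prp:subword-Q-symb} any word $w\in[\crys^{\ox\tilde{n}}]^{\mathsf{sing}}_\mu$ with $\QRSK(w)=U$ has $\lamb$-partial Q-symbols $\Tb$; in particular $w=\imath_{\Tb}(b)$ for some $b\in[\crys(\lamb)]^{\mathsf{sing}}_\mu$.

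Now Lemma~\ref{lem:left-sqr-commutes} gives $\bar{s}_{1q}\cdot w=\imath_{\hat{s}_{1q}\cdot\Tb}(s_{1q}\cdot b)$; in particular the $\hat{s}_{1q}\cdot\lamb$-partial Q-symbols of $\bar{s}_{1q}\cdot w$ are $\hat{s}_{1q}\cdot\Tb$, that is, $\QRSK\bigl((\bar{s}_{1q}\cdot w)|_{m_i^q+1,\,m_{i+1}^q}\bigr)=T_{\hat{s}_{1q}(i)}$. The $J_{\tilde{n}}$-equivariance of $w\mapsto\QRSK(w)$ (Proposition~\ref{prp:std-tab-equiv-bij}) combined with the $J_{\tilde{n}}$-equivariance of $\decgd{\sq^{\tilde{n}},\mu^\cc}\simeq\SYT(\mu)$ (Proposition~\ref{prp:flipping-decg-is-schutz}) then identifies the standard tableau attached to $\bar{s}_{1q}\cdot\jmath_{\Tb}(\gamma)$ with $\QRSK(\bar{s}_{1q}\cdot w)$. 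A further application of Proposition~\ref{prp:subword-Q-symb} shows the skew subtableau of this tableau in positions $[m_i^q+1,\,m_{i+1}^q]$ rectifies to $T_{\hat{s}_{1q}(i)}$. This is exactly the rectification condition characterising the $i$-th block of $\jmath_{\hat{s}_{1q}\cdot\Tb}(s_{1q}\cdot\gamma)$: provided both cylindrical growth diagrams are lifts of $s_{1q}\cdot\gamma$ under the reduction modulo $m^{(q)}(i)\deq|\lambda_{\hat{s}_{1q}(i)}|$, the $i$-th skew blocks already agree in dual equivalence class, and Theorem~\ref{thm:props-of-dual-equiv}(ii) forces them to coincide. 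Remark~\ref{rem:path-defines-uniquely} then finishes the argument.

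The main obstacle is the reduction claim just used, that $\bar{s}_{1q}\cdot\jmath_{\Tb}(\gamma)$ is a lift of $s_{1q}\cdot\gamma$ modulo $m^{(q)}$. To verify it, one unpacks the factorisation $\bar{s}_{1q}=\bigl(\prod_{i=1}^q\tilde{s}_{(m_i^q+1)\,m_{i+1}^q}\bigr)\,\tilde{s}_{1,\,m_q}$ and applies Proposition~\ref{prp:crossing-walls-in-general} to each factor in turn: the first factor $\tilde{s}_{1,\,m_q}$ flips a triangle whose three corners lie on the $m$-reduction grid, so it reduces precisely to the single wall-crossing producing $s_{1q}\cdot\gamma$ in $\decgd{\lamb,\mu^\cc}$, while each subsequent factor $\tilde{s}_{(m_i^q+1)m_{i+1}^q}$ flips a triangle strictly contained in a single edge of the $m^{(q)}$-reduction grid and so leaves the reduction modulo $m^{(q)}$ unchanged. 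Carefully tracking the indexing shift between the $m$- and $m^{(q)}$-grids as the inner wall-crossings are composed constitutes the technical content of this verification.
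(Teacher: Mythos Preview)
Your strategy is a genuine alternative to the paper's. The paper proves the lemma by computing the path-tableaux on both sides directly: for $\jmath_{\hat{s}_{1q}\cdot\Tb}(s_{1q}\cdot\gamma)$ one reads off the sequence $V_q,\ldots,V_1,U_{q+1},\ldots,U_n$ immediately from the definition and one application of Proposition~\ref{prp:crossing-walls-in-general}; for $\bar{s}_{1q}\cdot\jmath_{\Tb}(\gamma)$ one performs the large flip and then the small flips one at a time, and the heart of the argument is the shuffle computation showing that after each small flip the dual-equivalence class on the vertical edge to its east is unchanged (the $\beta'=\beta_{q-i+1}$ step). Your route instead imports the slide-equivalence information on the blocks from Lemma~\ref{lem:left-sqr-commutes} via the equivariances of Propositions~\ref{prp:flipping-decg-is-schutz} and~\ref{prp:std-tab-equiv-bij}, leaving only the dual-equivalence information --- your ``reduction claim'' --- to verify.

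The reorganisation is sound, but the residual step is heavier than you indicate. Your assertion that each small factor $\tilde{s}_{(m_i^q+1)m_{i+1}^q}$ ``leaves the reduction modulo $m^{(q)}$ unchanged'' because its triangle sits inside a single reduction edge is not merely index bookkeeping. By Proposition~\ref{prp:crossing-walls-in-general} the entries of the cylindrical growth diagram lying strictly east of the small triangle --- in particular the entire vertical segment from $(1,\hat m^{(q)}(i))$ to $(1,\hat m^{(q)}(i+1))$ defining that block's dual-equivalence class --- fall into the recursively-recomputed region, and the \emph{tableaux} there genuinely change. What survives is only their dual-equivalence class, and seeing this requires exactly the observation the paper isolates: $T_{q-i+1}$ and $\xi T_{q-i+1}$, being of normal shape, lie in the same dual-equivalence class (Theorem~\ref{thm:props-of-dual-equiv}(\ref{item:normal-all-de})), so the shuffle producing the eastern edge is the same before and after the flip. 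Thus your approach does not bypass the paper's key technical step; it repackages everything else around it. What your route does buy is a clean conceptual separation of the slide-equivalence and dual-equivalence halves of the problem, and the realisation that the former is already contained in Lemma~\ref{lem:left-sqr-commutes}.
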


\begin{proof}
By Corollary~\ref{cor:monodromy-action}, the action of the cactus group on decgd's is given by the rotation of certain triangles. Let \( \gamma \in \decgd{\lamb,\mu^\cc} \). We will first calculate the tableaux defined by the growth along the path from \( (1,1) \) to \( (1,\tilde{n}+2) \) in \( \jmath_{\bar{s}_{1q}\cdot \Tb} (s_{1q}\cdot \gamma) \). As depicted in Figure~\ref{fig:dec-alpha-beta} (for \( q = 4 \)) let \( \alpha_i \) be the dual equivalence class on the edge connecting \( (1,i) \) and \( (1,i+1) \), for \( 1 \le i \le n \) and \( \beta_i \) the dual equivalence class of of the edge connecting \( (q+1,i) \) and \( (q+1,i+1) \) for \( 1 \le i \le q \). Furthermore let \( U_i \) and \( V_i \) be the unique standard tableaux of dual equivalence classes \( \alpha_i \) and \( \beta_i \) respectively which are slide equivalent to \( T_i \).

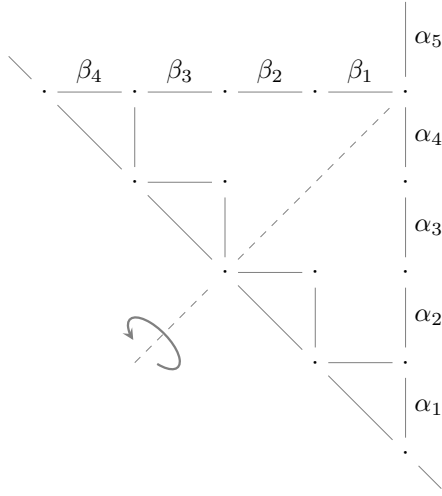
\begin{figure}
  \centering
  \begin{tikzpicture}[scale=0.6]
    \node (00) at (0,0) {\( \cdot \)};
    \node (below) at (1,-1) {\(  \)};
    \node (above) at (-9,9) {\(  \)};
    \node (uq) at (0,8) {\( \cdot \)};
    \node (qq) at (-8,8) {\( \cdot \)};

    \node (mid1) at (-2,2) {\( \cdot \)};
    \node (mid2) at (-4,4) {\( \cdot \)};
    \node (mid3) at (-6,6) {\( \cdot \)};

    \node (tri1) at (0,2) {\( \cdot \)};
    \node (tri2) at (-2,4) {\( \cdot \)};
    \node (tri3) at (-4,6) {\( \cdot \)};
    \node (tri4) at (-6,8) {\( \cdot \)};

    \node (up1) at (-4,8) {\( \cdot \)};
    \node (up2) at (-2,8) {\( \cdot \)};
    \node (side1) at (0,4) {\( \cdot \)};
    \node (side2) at (0,6) {\( \cdot \)};

    \draw[help lines] (below) -- (00) -- (mid1) -- (mid2) -- (mid3) -- (qq) -- (above);
    \draw[help lines] (00) -- node[anchor=west,black] {\( \alpha_1 \)} 
                    (tri1) -- node[anchor=west,black] {\( \alpha_2 \)} 
                   (side1) -- node[anchor=west,black] {\( \alpha_3 \)} 
                   (side2) -- node[anchor=west,black] {\( \alpha_4 \)} 
                      (uq) -- node[anchor=south,black] {\( \beta_1 \)} 
                     (up2) -- node[anchor=south,black] {\( \beta_2 \)} 
                     (up1) -- node[anchor=south,black] {\( \beta_3 \)} 
                    (tri4) -- node[anchor=south,black] {\( \beta_4 \)} 
                    (qq);
    \draw[help lines] (tri1) -- (mid1) -- (tri2) -- (mid2) -- (tri3) -- (mid3) -- (tri4);

    \draw[help lines] (uq) -- (0,10) node[midway,black,anchor=west] {\( \alpha_5 \)};
    \draw[help lines,dashed] (-6,2) --  node[near start,solid] {\AxisRotator[rotate=45]} (mid2) -- (uq);

  \end{tikzpicture}
  \caption{The dual equivalence classes \( \alpha_i \) and \( \beta_i \).}
  \label{fig:dec-alpha-beta}
\end{figure}

The action of \( s_{1q} \) flips the triangle about the axis shown in Figure~\ref{fig:dec-alpha-beta} and preserves the partitions and dual equivalence classes along the path from \( (1,q+1) \) to \( (1,n+2) \) by Proposition~\ref{prp:crossing-walls-in-general}.  By definition, \( \jmath_{\hat{s}_{1q} \cdot \Tb}(s_{1q}\cdot \gamma) \) is then constructed by lifting the appropriate dual equivalence classes to the following tableaux along the path \( (1,1) - (1,\tilde{n}+2) \),
\begin{equation}
\label{eq:seq-of-tableaux-inflipped}
 V_q,  V_{q-1}, \cdots  V_1, U_{q+1}, \cdots U_n.
\end{equation}
This determines \( \jmath_{\hat{s}_{1q} \cdot \Tb}(s_{1q}\cdot \gamma) \). Now we make the same calculation for the other side of the commutative diagram. 

First apply \( \jmath_{\Tb} \) to \( \gamma \), which means lifting the dual equivalence classes along \( (1,1) - (1,n+2) \) (these are the classes \( \alpha_i \)) to \( U_i \). Now apply \( \overline{s}_{1q} \), this means flipping a large triangle and several smaller triangles. Figure~\ref{fig:figure-for-proof} depicts (for \( q=4 \)) the resulting diagram after flipping \emph{only} the large triangle. We have only marked the dual equivalence classes on the vertical and not the actual tableaux.

Now we flip the small triangles, working right to left. The order we flip does not matter as these elements of the cactus group commute. The first triangle is easy, by Proposition~\ref{prp:crossing-walls-in-general} we preserve all the other small triangles as well as the entire path \( (1,\abs{\lambda_{q}}+1) - (1,\tilde{n}+2) \). We end up with \( T_q = V_q \) along the path \( (1,1) - (1,\abs{\lambda_{q}}+1) \).

The triangles further to the right take some more thought, we will try and flip the \( i^{\text{th}} \) triangle. Flipping this triangle preserves all the small triangles to the left and the right as well as everything on the path \( (1,1) - (1,\tilde{n}+2) \) except for the section between \( (1,m^{\hat{s}_{1q}}_i+1) \) and \( (1,m^{\hat{s}_{1q}}_{i+1}+1) \). Locally we have the picture
\begin{equation*}
  \begin{tikzpicture}[scale=0.9]
    \node (ii) at (-3,3) {\( \cdot \)};
    \node (i-1) at (0,0) {\( \cdot \)};
    \node (la) at (0,3) {\( \cdot \)};
    \node (0i1) at (5,0) {\( \cdot \)};
    \node (0i) at (5,3) {\( \cdot \)};

    \node (midd) at (3,0) {\( \cdots \)};
    \node (midu) at (3,3) {\( \cdots \)};

    \draw[help lines] (ii) -- node[midway,black,anchor=south] {\( {\color{red!60}T_{q-i+1}}\; {\color{green!60}\xi T_{q-i+1}} \)}
                      (la) -- node[near end,black,anchor=south] {\( {\color{red!60}\varepsilon}\; {\color{green!60}\varepsilon} \)}
                      (midu) -- 
                      (0i) -- node[midway,black,anchor=west] {\( {\color{red!60}\beta_{q-i+1}}\; {\color{green!60}\beta'} \)}
                      (0i1) -- (midd) -- node[near start,black,anchor=north] {\( {\color{red!60}\delta}\; {\color{green!60}\delta} \)}
                      (i-1) -- node[midway,black,anchor=west] {\( {\color{red!60}\xi T_{q-i+1}}\; {\color{green!60}T_{q-i+1}}  \)}
                      (la);

    \draw[help lines] (ii) -- (i-1);
    \draw[help lines,dashed] (la) -- (-3,0) node[near end,solid] {\AxisRotator[rotate=45]};

  \end{tikzpicture}
\end{equation*}
where we have marked the dual equivalence classes and tableaux before the flip in red and after the flip in green. In fact \( \beta' = \beta_{q-i+1} \). To see this denote by \( \eta \) the dual equivalence class of \( T_i \), this is also the dual equivalence class of \( \xi T_i \) by Theorem~\ref{thm:props-of-dual-equiv}~(~\ref{item:normal-all-de}). Thus \( (\delta,\beta_{q-i+1}) \) is the shuffle of \( (\eta,\varepsilon) \). However \( (\delta,\beta') \) is also the shuffle of \( (\eta,\varepsilon) \), thus \( \beta' = \beta_{q-i+1} \).

What we have shown is after flipping all the triangles, (i.e. applying \( \overline{s}_{1q} \) to \( \jmath_{\Tb}(\gamma) \)) the dual equivalence class in the \( i^{\text{th}} \) position on the path \( (1,1) - (1,m_q+1) \) is \( \beta_{q-i+1} \) and the tableaux in this position is thus the unique tableaux in \( \beta_{q-i+1} \) slide equivalent to \( T_{q-i+1} \). By assumption this is \( V_{q-i+1} \). Since we never changed anything on the path \( (1,m_{q}+1) - (1,\tilde{n}+2) \), the sequence of tableaux along the path \( (1,1) - (1,\tilde{n}+2) \) is 
\begin{equation*}
 V_q,  V_{q-1}, \cdots  V_1, U_{q+1}, \cdots U_n,
\end{equation*}
which coincides with~(\ref{eq:seq-of-tableaux-inflipped}). Hence \( \jmath_{\hat{s}_{1q} \cdot \Tb}(s_{1q}\cdot \gamma) = \overline{s}_{1q} \cdot \jmath_{\Tb}(\gamma) \).
\end{proof}

\begin{figure}
  \centering
  \begin{tikzpicture}[scale=0.75]
    \node (00) at (0,0) {\( \cdot \)};
    \node (below) at (1,-1) {\(  \)};
    \node (above) at (-9,9) {\(  \)};
    \node (uq) at (0,8) {\( \cdot \)};
    \node (qq) at (-8,8) {\( \cdot \)};

    \node (mid1) at (-2,2) {\( \cdot \)};
    \node (mid2) at (-4,4) {\( \cdot \)};
    \node (mid3) at (-6,6) {\( \cdot \)};

    \node (tri1) at (0,2) {\( \cdot \)};
    \node (tri2) at (-2,4) {\( \cdot \)};
    \node (tri3) at (-4,6) {\( \cdot \)};
    \node (tri4) at (-6,8) {\( \cdot \)};

    \node (up1) at (-4,8) {\( \cdot \)};
    \node (up2) at (-2,8) {\( \cdot \)};
    \node (side1) at (0,4) {\( \cdot \)};
    \node (side2) at (0,6) {\( \cdot \)};

    \draw[help lines] (below) -- (00) -- (mid1) -- (mid2) -- (mid3) -- (qq) -- (above);
    \draw[help lines] (00) -- node[anchor=west,black] {\( \beta_4, \xi T_4 \)} 
                    (tri1) -- node[anchor=west,black] {\( \beta_3 \)} 
                   (side1) -- node[anchor=west,black] {\( \beta_2 \)} 
                   (side2) -- node[anchor=west,black] {\( \beta_1 \)} 
                      (uq) -- node[anchor=south,black] {} 
                     (up2) -- node[anchor=south,black] {} 
                     (up1) -- node[anchor=south,black] {} 
                    (tri4) -- node[anchor=south,black] {} 
                    (qq);
    \draw[help lines] (tri1) -- node[anchor=south,black] {\( T_4 \)}
                      (mid1) -- node[anchor=west,black] {\( \xi T_3 \)} 
                      (tri2) -- node[anchor=south,black] {\( T_3 \)}
                      (mid2) -- node[anchor=west,black] {\( \xi T_2 \)} 
                      (tri3) -- node[anchor=south,black] {\( T_2 \)}
                      (mid3) -- node[anchor=west,black] {\( \xi T_1 \)} 
                      (tri4);
    \node[anchor=south] at (-7,8) {\( T_1 \)};

    \draw[help lines,dashed] (tri1) -- (-3,-1) node[near end,solid] {\AxisRotator[rotate=45]};
    \draw[help lines,dashed] (tri2) -- (-5,1) node[near end,solid] {\AxisRotator[rotate=45]};
    \draw[help lines,dashed] (tri3) -- (-7,3) node[near end,solid] {\AxisRotator[rotate=45]};
    \draw[help lines,dashed] (tri4) -- (-9,5) node[near end,solid] {\AxisRotator[rotate=45]};

    \draw[help lines,dashed] (mid2) -- (uq) -- (2,10) node[near end,solid] {\AxisRotator[rotate=45]};

  \end{tikzpicture}
  \caption{\( \overline{s}_{1q} \) acting on a decgd}
  \label{fig:figure-for-proof}
\end{figure}

\subsection{Monodromy in Rybnikov's compactification}
\label{sec:rybnikov}

Theorem~\ref{thm:main-thm} is conjectured by Rybnikov~\cite{Rybnikov:2014wh} in a different form. In this section we recall the conjecture and use Theorem~\ref{thm:main-thm} to prove it. 

Rybnikov constructs commutative subalgebras \( \rybalg(\lamb;z)_\mu \) of \( \End(\Llambmu) \) for any point \( z \) in the compactified moduli space \( \M{n+1}(\CC) \). For \( z \in \oM{n+1}(\CC) \), the algebra \( \rybalg(\lamb;z)_\mu \) is simply the Bethe algebra \( \bethe(\lamb;z)_\mu \). Rybnikov shows that for \emph{all real} \( z \in \M{n+1}(\CC) \) the algebra \( \rybalg(\lamb;z)_\mu \) has simple spectrum. This means in particular if we let \( \rybalg(\lamb)_\mu \) be the corresponding family of algebras over \( \M{n+1}(\CC) \) and 
\begin{equation*}
  \rybspec(\lamb)_\mu \deq \Spec \rybalg(\lamb)_\mu,
\end{equation*}
the spectrum of these algebras, then the finite map \( \rybspec(\lamb)_\mu(\RR) \rightarrow \M{n+1}(\RR) \) is a topological covering. The conjecture~\cite[Conjecture~1.6]{Rybnikov:2014wh} which we prove is the following.

\begin{Theorem}
  \label{thm:etingof-conj}
For \( z \in \M{n+1}(\RR) \) the monodromy action of \( PJ_n \) on \( \rybspec(\lamb)_\mu(z) \) is isomorphic to the action of \( PJ_n \) on \( \Blambmu \).
\end{Theorem}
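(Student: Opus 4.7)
The plan is to deduce the statement from Theorem~\ref{thm:main-thm} by matching the $PJ_n$-monodromy of Rybnikov's covering with that of Speyer's covering, the latter being the object computed by Corollary~\ref{cor:monodromy-action} and Theorem~\ref{thm:equivariant-map}. Fix a basepoint $z \in \oM{n+1}(\RR)$. Since $\rybalg(\lamb;z)_\mu = \bethe(\lamb;z)_\mu$ in the interior, the fibre $\rybspec(\lamb;z)_\mu$ equals $\bspec(\lamb;z)_\mu$, and is identified with a fibre of $\Ss(\lamb,\mu^\cc)$ (and hence with $\Blambmu$) via the MTV isomorphism $\theta$ composed with the inclusion $\iota\map{\Omega(\lamb,\mu^\cc)}{\Ss(\lamb,\mu^\cc)}$. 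It therefore suffices to show that the monodromy representations $\rho_{\rybspec},\rho_{\Ss}\map{PJ_n}{S_{c^\mu_\lamb}}$ coincide under this identification of fibres.

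For this, I would choose an open complex neighbourhood $W$ of $\M{n+1}(\RR)$ in $\M{n+1}(\CC)$ on which both of the finite maps $\rybspec(\lamb)_\mu \to \M{n+1}(\CC)$ and $\Ss(\lamb,\mu^\cc) \to \M{n+1}(\CC)$ are unramified; such a $W$ exists because the ramification loci are closed complex subvarieties of $\M{n+1}(\CC)$, disjoint from $\M{n+1}(\RR)$ by Rybnikov's simple-spectrum theorem and Theorem~\ref{thm:real-points-labelling} respectively. Over $W$, both families are genuine covering maps of complex manifolds, and they are canonically identified over $W \cap \oM{n+1}(\CC)$ via $\iota \circ \theta$. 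Given a loop $\gamma$ in $\M{n+1}(\RR) \subset W$ based at $z$, I would homotope $\gamma$ within $W$ to a loop $\gamma'$ lying in $W \cap \oM{n+1}(\CC)$; this is possible since the complement $W \setminus \oM{n+1}(\CC)$ is a complex divisor and hence has real codimension two in $W$, which makes the inclusion $\pi_1(W \cap \oM{n+1}(\CC),z) \to \pi_1(W,z)$ surjective. Path-lifting in the coverings over $W$ shows that the monodromy of $\gamma$ equals that of $\gamma'$ in each family separately, and over $W \cap \oM{n+1}(\CC) \subset \oM{n+1}(\CC)$ the two families coincide under $\iota\circ\theta$. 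Combining, $\rho_{\rybspec}(\gamma) = \rho_{\Ss}(\gamma)$ for every $\gamma \in PJ_n$, and by Theorem~\ref{thm:main-thm} this common representation is the cactus action on $\Blambmu$.

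The main obstacle is the rigorous justification of the homotopy step, namely the $\pi_1$-surjectivity of $W \cap \oM{n+1}(\CC) \hookrightarrow W$ in the presence of the ramification loci. When $\M{n+1}(\CC)$ has complex dimension at least two this is a standard transversality argument. In the low-dimensional exceptional case (notably $n = 3$, where $\M{4}(\CC) \cong \PP^1$ is a complex curve and the boundary consists of three points through which any generator of $\pi_1(\M{4}(\RR)) \cong \ZZ$ must pass) the perturbation has to be carried out locally by pushing the loop off each boundary point into a small punctured complex disk; the required path-connectedness is immediate, so no genuine difficulty arises. The argument otherwise reduces entirely to material already developed in Sections~\ref{sec:speyers-flat-family} and~\ref{sec:bethe-algebras}.
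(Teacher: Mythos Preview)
Your proposal is correct and follows essentially the same strategy as the paper: deform a real loop into the complex interior $\oM{n+1}(\CC)$, where Rybnikov's family agrees with the Bethe/Speyer family, and conclude via Theorem~\ref{thm:main-thm}. The only difference is cosmetic: the paper carries out your homotopy step explicitly (Lemma~\ref{lem:deform-paths}), pushing the colliding marked points along small semicircles into the complex plane near each wall-crossing, rather than invoking the abstract real-codimension-two transversality principle you cite.
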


To prove the theorem we require a lemma about the topology of \( \M{n+1}(\RR) \) sitting inside \( \M{n+1}(\CC) \). Let \( U \subset \M{n+1}(\CC) \) be the dense open set over which \( \rybspec(\lamb)_\mu \) is unramified: \( U \) contains \( \M{n+1}(\RR) \). Let \( U_0 = U \cap \oM{n+1}(\CC) \).

\begin{Lemma}
  \label{lem:deform-paths}
Let \( x, y \in \oM{n+1}(\RR) \). Any path in \( \M{n+1}(\RR) \) with endpoints \( x \) and \( y \) is homotopy equivalent to a path in \( U_0 \) with endpoints \( x \) and \( y \).
\end{Lemma}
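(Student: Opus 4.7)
The plan is a general position argument based on the fact that $U \setminus U_0$ has real codimension at least two in $U$, so any path in $U$ whose endpoints already lie in $U_0$ can be perturbed off $U \setminus U_0$ rel endpoints. First I would identify this complement: since $U \supseteq \M{n+1}(\RR)$ and $U_0 = U \cap \oM{n+1}(\CC)$,
\begin{equation*}
  U \setminus U_0 \;=\; U \cap \bigl(\M{n+1}(\CC) \setminus \oM{n+1}(\CC)\bigr),
\end{equation*}
which is the intersection of $U$ with the boundary divisor of the Deligne--Mumford compactification. This boundary is a closed complex analytic subvariety of complex codimension one in the smooth variety $\M{n+1}(\CC)$, so $U \setminus U_0$ is a closed complex analytic subset of $U$ of complex codimension one, equivalently of real codimension at least two. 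Moreover $x,y \in \oM{n+1}(\RR) \subseteq \M{n+1}(\RR) \cap \oM{n+1}(\CC) \subseteq U_0$, so the endpoints already lie in $U_0$ and the intended homotopy is to take place in $U$ (which is what is needed in the application, since $\rybspec(\lamb)_\mu$ is unramified over $U$).

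Given a path $\gamma : [0,1] \to \M{n+1}(\RR)$ from $x$ to $y$, I view $\gamma$ as a path in $U$ and deform it rel endpoints to a path contained entirely in $U_0$. Since $\gamma([0,1])$ is compact and $U \setminus U_0$ is closed in $U$, I cover a neighborhood of $\gamma([0,1]) \cap (U \setminus U_0)$ by finitely many holomorphic charts of $U$ in which a local branch of $U \setminus U_0$ is cut out by the vanishing of a single complex coordinate $z_1$. Pushing $\gamma$ in the imaginary $z_1$-direction inside each chart, and gluing the local perturbations by a partition of unity, produces a nearby path $\gamma'$ disjoint from $U \setminus U_0$, hence contained in $U_0$. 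The same local picture shows that the straight-line homotopy from $\gamma$ to $\gamma'$ in $U$ can be routed around $U \setminus U_0$, since a small open ball in $U$ minus a real-codimension-two analytic subset is simply connected.

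The main subtlety is the uniformity of these local perturbations along all of $\gamma([0,1])$: a real path can meet the real locus of the boundary divisor along the codimension-one walls separating the associahedra of Section~\ref{sec:tilling-Mk-associahedra}, so the crossing locus in $\gamma([0,1])$ is not a priori finite, transverse, or isolated. However, the codimension-two property is a purely complex phenomenon, not a real one, and is insensitive to how badly $\gamma$ behaves in the real picture: compactness of $\gamma([0,1])$ gives a finite cover by the charts above, and in each chart the complement of $U \setminus U_0$ is locally simply connected, so the local perturbations combine via a partition of unity to yield a global homotopy in $U$. The resulting path $\gamma' \subset U_0$ is therefore homotopic to $\gamma$ rel endpoints inside $U$, which is the required conclusion.
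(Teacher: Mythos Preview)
Your argument is correct in its core: \(U\setminus U_0\) is the intersection of \(U\) with the boundary divisor, hence a closed complex-analytic subset of \(U\) of real codimension at least two, so a path in \(U\) with endpoints in \(U_0\) can be perturbed rel endpoints, within \(U\), to a path in \(U_0\). This is the same underlying mechanism the paper uses, but packaged differently. The paper first homotopes the given path inside \(\M{n+1}(\RR)\) so that it meets the boundary only in finitely many transverse crossings of codimension-one walls, and then writes down an explicit semicircular detour into the complex directions at each crossing; the homotopy back to the original path is given by shrinking the radius of the detours. Your version replaces this explicit local picture by a clean transversality statement, which is shorter and avoids the bookkeeping of wall-crossings.

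One point to clean up: the sentence invoking that ``a small open ball in \(U\) minus a real-codimension-two analytic subset is simply connected'' is both unnecessary and false (already \(\CC\setminus\{0\}\) is a counterexample). You correctly observe earlier that the homotopy only needs to take place in \(U\), not in \(U_0\); the straight-line homotopy from \(\gamma\) to your perturbed \(\gamma'\), performed in the local charts, stays in \(U\) simply because \(U\) is open and the perturbation is small. There is no need to ``route the homotopy around \(U\setminus U_0\)'', so you should delete that clause. With that sentence removed, your argument is complete and somewhat slicker than the paper's explicit construction.
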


\begin{proof}
Any path in \( \M{n+1}(\RR) \) is homotopy equivalent to path in \( \M{n+1}(\RR) \) which passes transversally through codimension \( 1 \) cells only. Since \( U_0 \) is open and contains \( \oM{n+1}(\RR) \) it is enough to show we can move our path off such an intersection while remaining in an arbitrarily small neighbourhood of the intersection point.

Locally at the intersection our path is given by \( n \) marked points 
\begin{equation*}
  z_1(t), z_2(t),\ldots,z_n(t) \in \RR
\end{equation*}
depending on a single parameter \( t \), (we fix the last marked point at infinity). Assume for simplicity that \( z_1(t) < z_2(t) < \ldots < z_n(t) \) for \( t < 0 \) and assume the path hits the wall which swaps the order of the marked points \( z_p(t),\ldots,z_q(t) \) at \( t=0 \). We will use \( i \) to denote an integer in the interval \( [p,q] \) and \( j \) to denote an integer between \( 1 \) and \( n \) not in \( [p,q] \). This means \( \lim_{t \rightarrow 0} z_i(t) = y \) for some real number \( y \) which we can assume to be \( 0 \) by using an affine translation if needed. Let \( \epsilon > 0  \) be a small real parameter. Let \( c_i(\epsilon) = \frac{1}{2}(z_i(\epsilon) + z_i(-\epsilon)) \) and \( r_i(\epsilon) = \frac{1}{2}\abs{z_i(\epsilon) - z_i(-\epsilon)} \).  Now define the functions depending on a parameter \( \delta \in [0,1] \)
\begin{equation*}
  f_i(t) =
  \begin{cases}
    \mathrm{sgn}(z_i(\epsilon)) \sqrt{r_i(\epsilon)^2 - (z_i(t) - c_i(\epsilon))^2} & \text{if } t \in (-\epsilon, \epsilon) \\
    0 & \text{otherwise.}
  \end{cases}
\end{equation*}
Consider the path \( z'(t) \) given by \( z'_j(t) = z_j(t) \) and \( z'_i(t) = z_i(t) + f_i(t)\sqrt{-1} \). Let \( U' \) be any neighbourhood of the intersection point. By choosing \( \epsilon \) small enough we can ensure our path is contained in \( U' \). The path \( z'(t) \) depends continuously on \( \epsilon \) and the limit as \( \epsilon \) goes to \( 0 \) is the original path \( z(t) \), hence the paths are homotopy equivalent. We can illustrate this with the picture given in Figure~\ref{fig:homot-path-inters}. We can do this for all transversal intersections and hence the claim is proved.
\end{proof}

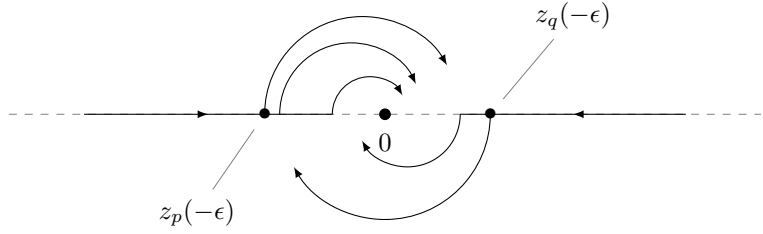
\begin{figure}
  \centering
  \begin{tikzpicture}
    \draw[help lines,dashed] (-5,0) -- (5,0);
    \draw[fill] (0,0) circle (2pt);
    \node[anchor=north,yshift=-4pt] at (0,0) {\( 0 \)};

\begin{scope}[decoration={
    markings,
    mark=at position 0.5 with {\arrow{>}}}
    ] 
    \draw[postaction={decorate}] (-4,0)--(-0.7,0);
    \draw[postaction={decorate}] (4,0)--(1,0);
\end{scope}
    \draw[->] (-0.7,0) arc (180:30:0.5);
    \draw[->] (-1.4,0) arc (180:25:0.95);
    \draw[->] (-1.6,0) arc (180:30:1.3);

    \draw[->] (1,0) arc (0:-150:0.7);
    \draw[->] (1.4,0) arc (0:-150:1.4);

    \node (a1) at (-1.6,0) {\( \bullet \)};
    \node (b1) at (1.4,0) {\( \bullet \)};
    \node[anchor=north] (a) at (-2.5,-1) {\( z_p(-\epsilon) \)};
    \node[anchor=south] (b) at (2.5,1) {\( z_q(-\epsilon) \)};

    \draw[help lines] (a) -- (a1);
    \draw[help lines] (b) -- (b1);

\end{tikzpicture}
  \caption{Homotopy of path off intersection point.}
  \label{fig:homot-path-inters}
\end{figure}

\begin{proof}[Proof of Theorem~\ref{thm:etingof-conj}]
Fix a basepoint \( z \in \oM{n+1}(\RR) \). We identify the fibre with \( \Blambmu \) using the same process described in Section~\ref{sec:proof-of-main-theorem}. Suppose we have a loop \( \gamma_s \) in \( \M{n+1}(\RR) \) given by the element \( s \in PJ_n \). By Lemma~\ref{lem:deform-paths} \( \gamma_s \) is homotopy equivalent to a loop \( \gamma'_s \) contained entirely in \( U_0 \). By Theorem~\ref{thm:main-thm} the monodromy action of \( s \) on the fibre is the same as the action of \( s \) on \( \Blambmu \).
\end{proof}

\bibliographystyle{abbrv}
\bibliography{bibliography}

\end{document}